\documentclass[pdflatex,sn-mathphys-num]{sn-jnl}

\usepackage{amsmath,amssymb,amsthm,mathtools,mathrsfs}
\usepackage{graphicx}%
\usepackage{multirow}%
\usepackage{amsmath,amssymb,amsfonts}%
\usepackage{amsthm}%
\usepackage{mathrsfs}%
\usepackage[title]{appendix}%
\usepackage{xcolor}%
\usepackage{textcomp}%
\usepackage{manyfoot}%
\usepackage{booktabs}%
\usepackage{algorithm}%
\usepackage{algorithmicx}%
\usepackage{algpseudocode}%
\usepackage{listings}%
\usepackage{xcolor}
\usepackage{graphicx}
\usepackage{booktabs}
\usepackage{hyperref}
\hypersetup{
  colorlinks=true,
  linkcolor=blue!55!black,
  citecolor=blue!55!black,
  urlcolor=blue!55!black
}

\theoremstyle{plain}
\newtheorem{theorem}{Theorem}[section]
\newtheorem{proposition}[theorem]{Proposition}
\newtheorem{lemma}[theorem]{Lemma}

\theoremstyle{definition}
\newtheorem{definition}[theorem]{Definition}
\newtheorem{assumption}[theorem]{Assumption}
\newtheorem{remark}[theorem]{Remark}

\newcommand{\R}{\mathbb{R}}
\newcommand{\interior}[1]{\mathring{#1}}
\DeclareMathOperator{\sgn}{sgn}
\DeclareMathOperator{\dist}{dist}

\begin{document}
\title{\bf Local Time and Killed Resolvents in Reflected Optimal Stopping with a Max Payoff}

\author*[1]{\fnm{Louis Shuo} \sur{Wang}}
\email{wang.s41@northeastern.edu}
\equalcont{These authors contributed equally to this work.}

\author*[2]{\fnm{Ye} \sur{Liang}}\email{ye-liang@uiowa.edu}
\equalcont{These authors contributed equally to this work.}

\affil[1]{\orgdiv{Department of Mathematics},
\orgname{Northeastern University},
\orgaddress{
\city{Boston},
\state{MA},
\postcode{02115},
\country{USA}
}}

\affil[2]{\orgdiv{College of Engineering},
\orgname{The University of Iowa},
\orgaddress{
\city{Iowa City}, 
\postcode{52242}, 
\state{IA},
\country{USA}}}

\abstract{
We study infinite-horizon optimal stopping for normally reflected
two-dimensional diffusions in the positive quadrant with max payoff
\(G(x_1,x_2)=x_1\vee\alpha x_2\). The non-smooth payoff produces a singular
stopping-gain measure on the kink set \(\Delta=\{x_1=\alpha x_2\}\). We prove
$\displaystyle   \Gamma^\Delta(dx)
   =
   -\frac{n^\top a(x)n}{2\sqrt{1+\alpha^2}}\,\sigma_\Delta(dx)$,
with $n=(1,-\alpha)$,
so the diagonal component is non-positive and strictly negative under local
ellipticity. This implies that every interior kink point lies in the
continuation region. We further show that the correct value representation uses
the resolvent killed at first entry into the stopping set,
$\displaystyle V=G-R_r^{\mathcal C}\Gamma$,
and give a closed-form reflected Brownian counter-example showing that the
unrestricted reflected resolvent is generally wrong. A reflected Brownian
benchmark and numerical experiments illustrate the local-time, resolvent-gap,
and diagonal-avoidance mechanisms.
}
\keywords{reflected diffusion, killed resolvent, local time, It\^o--Tanaka formula, Skorokhod reflection, free boundary}
\pacs[MSC Classification]{60G40, 60J60, 60H30, 60J55, 35R35, 49L25}
\maketitle

\section{Introduction}
\label{sec:introduction}

\subsection{The problem}
\label{sec:intro-problem}
We consider infinite-horizon optimal stopping for a normally reflected
diffusion $X=(X^1,X^2)$ in the closed quadrant $\R^2_+=[0,\infty)^2$. A decision
maker chooses a stopping time $\tau$ to maximise the discounted reward net of a
running cost,
\begin{equation}
\label{eq:intro-value}
   V(x)
   =
   \sup_{\tau}
   \mathbb E_x\left[
      e^{-r\tau}G(X_\tau)
      -\int_0^\tau e^{-rs}c(X_s)\,ds
   \right],
   \qquad
   G(x_1,x_2)=x_1\vee\alpha x_2,
\end{equation}
with $\alpha>0$, discount $r>0$, and non-negative running cost $c$. Two features
make \eqref{eq:intro-value} a genuinely two-dimensional, non-smooth problem.
First, $X$ is confined to the quadrant by normal reflection at the two
coordinate axes, so the generator is supplemented by Neumann boundary behaviour,
and the corner needs separate care. Second, the max-type reward is only
piecewise affine: it has a convex Lipschitz kink along the diagonal
$\Delta=\{x_1=\alpha x_2\}$, across which its distributional second derivatives
concentrate as a surface measure. Both features break the smoothness that the
textbook free-boundary calculus tacitly assumes.

Max-type rewards $x_1\vee\alpha x_2$ are the canonical payoff for American
options on the maximum of several assets and for related exchange and
dual-strike contracts; the qualitative structure of the exercise regions was
analysed in \cite{broadie1997valuation,villeneuve1999exercise,bayer2020pricing,battauz2015real,wang2025analysis,dai2005american,detemple2014optimal,wang2025analysis1,soner2025stopping,whitehead2012bias}. Reflected dynamics in the orthant arise whenever the
state is a regulated or constrained process, as in queueing, inventory, and
economic models with reflecting barriers; the probabilistic foundations trace to
the Skorokhod-problem analyses of Tanaka \cite{tanaka1979stochastic}, Lions and Sznitman
\cite{lions1984stochastic}, Saisho \cite{saisho1987stochastic}, and Harrison and Reiman
\cite{harrison1981reflected}, with oblique reflection on non-smooth domains treated in
\cite{liu2025bidirectional,dupuis1993sdes,hino2021pathwise,lundstrom2019stochastic,liang2025global,li2026wong,lipshutz2018directional,deblassie1996brownian,duan2019white,wang2026algebraic,leimkuhler2023simplest,cai2026optimal,wang2026damage}.

\subsection{What is unconditional and what is conditional}
\label{sec:intro-tiers}
The analysis separates cleanly into two tiers, and keeping them apart is one of
the organising aims of the paper.

\emph{Tier I (unconditional, within the standing reflected-diffusion
hypotheses).} The decomposition of the stopping-gain object
$\Gamma=c+rG-\mathcal LG$ into an absolutely continuous part and an explicit
signed diagonal surface measure; the strict negativity of that diagonal measure
under local ellipticity; the resulting theorem that no interior diagonal point
is ever a stopping point; the failure of the unrestricted reflected resolvent;
and the killed-resolvent representation of the value once the first-entry time
is known to be optimal.

\emph{Tier II (conditional free-boundary geometry).} The epigraph
representation of the stopping set, monotonicity of the boundary, boundary trace
conditions, and candidate verification. These require structural monotonicity
and regularity hypotheses that must be checked per model; the paper states them
explicitly rather than smuggling them in.

The theory of optimal stopping and free-boundary problems is now classical and is surveyed in the monograph of Peskir and Shiryaev \cite{peskir2006optimal}; its probabilistic backbone---Snell envelopes, dynamic programming, and the connection to variational inequalities---is equally well established \cite{el2006aspects,bensoussan2011applications,yu2026pattern,friedman1982variational,jacka2019compensator,martyr2016finite,wang2026breakdown,christensen2021class,ma2021dynamic,yu2026rigorous,ankirchner2019verification,lv2026robust,yu2026beyond,jeon2023variational}. 
On the analytic side, the obstacle problem has a mature regularity theory, and the structure of the free boundary has been studied extensively since Caffarelli \cite{caffarelli1998obstacle} and in many later works \cite{caffarelli2008regularity,wang2025multi,figalli2024constraint,focardi2022local,huang2024regularity,gao2022rolling,du2026free,aleksanyan2024quantitative,eberle2023structure}. 
Viscosity solutions provide the standard weak framework in which value functions can be characterised without a priori smoothness \cite{crandall1992user,yu2026from,soner2024viscosity,cheng2025viscosity}, while the distributional or measure-valued reformulation of the one-dimensional problem was developed by Lamberton and Zervos \cite{lamberton2013optimal}.

This split is the response to a basic difficulty. In the optimal stopping
literature for multidimensional and time-inhomogeneous problems, free-boundary
regularity has been obtained only under problem-specific hypotheses and by
substantial work; see De Angelis \cite{de2015note}, Peskir \cite{peskir2019continuity},
De Angelis and Stabile \cite{de2019lipschitz}, and De Angelis and Peskir
\cite{de2020global}. We therefore do not present free-boundary regularity
as automatic. What we do present unconditionally is the measure-theoretic and
potential-theoretic core, which is where the concrete and correctable
mathematics lies.

\subsection{Three corrections, made precise}
\label{sec:intro-corrections}
Heuristic treatments of problems like \eqref{eq:intro-value} commonly perform
three steps that are not justified as usually stated. We correct each.

\emph{(i) The stopping gain is a signed measure, not a function.} Because $G$ has
a convex kink on $\Delta$, the object $\Gamma=c+rG-\mathcal LG$ is not an
ordinary function: its singular part is a surface measure on $\Delta$ generated
by the local time of $Y=X^1-\alpha X^2$ at zero. We compute it in closed form by
Tanaka's formula and the occupation-density/co-area identity
(Theorem~\ref{thm:stopping-gain-measure}),
\[
   \Gamma^\Delta(dx)
   =-\frac{q(x)}{2\sqrt{1+\alpha^2}}\,\sigma_\Delta(dx),
   \qquad q=n^\top a\,n\ge0,
\]
which is non-positive as a measure and strictly negative where $a$ is
non-degenerate. A pointwise condition such as ``$\Gamma\ge0$ on the stopping
set'' is therefore ill-posed wherever the stopping set meets $\Delta$ in
positive length, unless the diagonal component is separated out. As an immediate
and concrete payoff, the strict sign forces every interior diagonal point into
the continuation region (Theorem~\ref{thm:no-stopping-kink}).

\emph{(ii) The potential must be killed at the stopping time.} Applying
It\^o--Tanaka up to the optimal time $\tau_{\mathcal D}$ yields
$\displaystyle   V(x)=G(x)-R_r^{\mathcal C}\Gamma(x)$,
where $R_r^{\mathcal C}$ is the resolvent \emph{killed} on entry into the
stopping set $\mathcal D$ (Theorem~\ref{thm:killed-resolvent-representation}). It
is generally not equal to the unrestricted reflected resolvent
$R_r^{\mathrm R}(\Gamma\mathbf 1_{\mathcal C})$, because the reflected process
continued past $\tau_{\mathcal D}$ may re-enter the continuation region. We make
this quantitative with a completely solved one-dimensional counter-example
(Proposition~\ref{prop:unrestricted-fails}).

\emph{(iii) Regularity is not a corollary of being a viscosity supersolution.} A
continuous viscosity supersolution of a linear elliptic inequality does not by
itself carry the Sobolev or measure structure needed for an
It\^o--Krylov--Tanaka argument \cite{krylov1987approach}. We separate the verification
principle (Theorem~\ref{thm:verification-measure}) from any regularity claim and
list its hypotheses explicitly.

\subsection{A verified benchmark}
\label{sec:intro-benchmark}
To show that the framework is operational and non-empty, not merely a list of
hypotheses, we verify it in full for a reflected Brownian model with constant
drift and constant uniformly elliptic covariance
(Section~\ref{sec:benchmark}). There the Skorokhod map is explicit, all standing
assumptions are checked, the value function is globally Lipschitz, the diagonal
stopping-gain measure is a strictly negative constant multiple of arc-length, the
no-stopping-on-the-diagonal theorem applies, and the killed-resolvent
representation holds at the optimal first-entry time.

\subsection{Numerical experiments}
\label{sec:intro-numerics}
The numerical part of the paper is designed to test the stochastic mechanisms
rather than to approximate a particular financial product. First, Monte Carlo
occupation-density estimates recover the $\sqrt t$ scaling of the local time at
the diagonal kink. Second, simulation of a one-dimensional reflected Brownian
motion reproduces the exact gap between the killed and unrestricted resolvents,
isolating the post-stopping re-entry mechanism. Third, a finite-difference
obstacle solver for the reflected Brownian benchmark visualises the positive
continuation advantage along the diagonal. These experiments support the
probabilistic interpretation of the theorems and help separate the
unconditional mechanism from the conditional free-boundary layer.

\subsection{Contributions}
\label{sec:intro-contributions}
The paper makes four contributions. First, it proves that the stopping gain
$c+rG-\mathcal LG$ is a signed measure and computes its diagonal surface
component explicitly (Theorem~\ref{thm:stopping-gain-measure}). Second, it
proves that strict negativity of this surface component forces every interior
point of the kink set into the continuation region
(Theorem~\ref{thm:no-stopping-kink}). Third, it proves that the value
representation uses the killed resolvent
(Theorem~\ref{thm:killed-resolvent-representation}) and gives a closed-form
reflected Brownian counter-example showing that the unrestricted resolvent is
wrong (Proposition~\ref{prop:unrestricted-fails}). Fourth, it verifies the
measure and killed-resolvent framework in a reflected Brownian benchmark
(Theorem~\ref{thm:benchmark}) and uses simulations to illustrate the
local-time, resolvent-gap, and diagonal-avoidance mechanisms
(Section~\ref{sec:numerics}).

\subsection{Organisation}
\label{sec:intro-Organisation}
Section~\ref{sec:model} fixes the reflected stopping problem, the obstacle
convention, and the verification principle.
Section~\ref{sec:measure-stopping-gain} computes the measure-valued stopping
gain and proves that the diagonal is never a stopping set.
Section~\ref{sec:why-killed} shows by counter-example why the unrestricted
resolvent is wrong. Section~\ref{sec:killed-resolvent} proves the
killed-resolvent representation and the boundary trace condition.
Section~\ref{sec:conditional-geometry} develops the conditional free-boundary
geometry. Section~\ref{sec:benchmark} verifies the framework in the reflected
Brownian benchmark. Section~\ref{sec:numerics} reports the Monte Carlo and
finite-difference experiments. Section~\ref{sec:discussion} collects limitations
and open problems.

\section{Reflected stopping problem and obstacle convention}
\label{sec:model}
This section fixes the probabilistic framework, the sign convention for the
obstacle problem, and the verification principle. The formulation is conditional
where it must be: the reflected diffusion is assumed to have the required Markov
and stability properties, and the verification theorem is stated under explicit
measure-superharmonicity hypotheses.

\subsection{Reflected diffusion and standing assumptions}
\label{sec:reflected-diffusion-standing}
Write $\R^2_+:=[0,\infty)^2$ and $\interior{\R}^2_+:=(0,\infty)^2$. Let
$W=(W^1,W^2)$ be a two-dimensional Brownian motion on a filtered probability
space $(\Omega,\mathcal F,(\mathcal F_t)_{t\ge0},\mathbb P)$ satisfying the usual
conditions. For each $x\in\R^2_+$ consider the normally reflected diffusion
\begin{equation}
\label{eq:reflected-sde}
   X_t
   =
   x
   +\int_0^t\mu(X_s)\,ds
   +\int_0^t\sigma(X_s)\,dW_s
   +L_t,
   \qquad t\ge0,
\end{equation}
with reflection process $L=(L^1,L^2)$ acting normally on the axes:
\begin{equation}
\label{eq:normal-reflection}
   L^i \text{ continuous, non-decreasing},\qquad
   L^i_0=0,\qquad
   \int_0^\infty \mathbf 1_{\{X_t^i>0\}}\,dL_t^i=0,
   \quad i=1,2.
\end{equation}
Let $a(x):=\sigma(x)\sigma(x)^\top$. The interior generator is
\[
   \mathcal L f(x)
   =
   \sum_{i=1}^2\mu_i(x)\partial_i f(x)
   +
   \frac12\sum_{i,j=1}^2a_{ij}(x)\partial_{ij}f(x),
   \qquad x\in\interior{\R}^2_+.
\]

\begin{assumption}[Standing assumptions]
\label{ass:reflected-diffusion}
\hfill
\begin{enumerate}
\item[\textup{(A1)}] $\mu:\R^2_+\to\R^2$ and $\sigma:\R^2_+\to\R^{2\times2}$ are
locally Lipschitz with at most linear growth.
\item[\textup{(A2)}] $a=\sigma\sigma^\top$ is locally uniformly elliptic in
$\R^2_+$: for every compact $K\subset\R^2_+$ there is $\lambda_K>0$ with
$\xi^\top a(x)\xi\ge\lambda_K|\xi|^2$ for $x\in K$, $\xi\in\R^2$.
\item[\textup{(A3)}] For each $x$, \eqref{eq:reflected-sde}--\eqref{eq:normal-reflection}
has a unique strong solution, and $(X^x)_x$ is strong Markov.
\item[\textup{(A4)}] For every $T>0$, compact $K\subset\R^2_+$, and some
$q\ge1$, $\lim_{y\to x}\mathbb E[\sup_{0\le t\le T}|X_t^y-X_t^x|^q]=0$ uniformly
on $K$.
\item[\textup{(A5)}] $G(x_1,x_2)=x_1\vee\alpha x_2$ with $\alpha>0$, and
$c:\R^2_+\to[0,\infty)$ is continuous.
\item[\textup{(A6)}] The value \eqref{eq:value-function} is finite with
polynomial growth: $0\le V(x)\le C_V(1+|x|^p)$ for some $C_V,p$, and the payoff
terms below are integrable for the stopping times in play.
\end{enumerate}
\end{assumption}

\begin{remark}[Reflected SDE hypotheses are not free]
\label{rem:reflected-sde-background}
Existence, uniqueness, and stability for reflected SDEs are delicate properties
of the Skorokhod problem, the domain, the reflection field, and the coefficients
\cite{tanaka1979stochastic,lions1984stochastic,saisho1987stochastic,harrison1981reflected,dupuis1993sdes};
they are part of the standing hypotheses, not consequences of writing down
\eqref{eq:reflected-sde}. Section~\ref{sec:benchmark} verifies them explicitly in
a concrete model.
\end{remark}

\begin{lemma}[Lyapunov condition for discounted integrability]
\label{lem:lyapunov-integrability}
Suppose there is $\Psi\in C^2(\R^2_+)$, $\Psi\ge1$, with $\partial_i\Psi=0$ on
$\{x_i=0\}$, and constants $K\ge0$, $\lambda<r$, $C_\Psi>0$ such that
$\mathcal L\Psi\le\lambda\Psi+K$ on $\interior{\R}^2_+$ and
$G^++c\le C_\Psi\Psi$. Then
$\mathbb E_x[\int_0^\infty e^{-rs}c(X_s)\,ds]<\infty$ and
$\sup_{t\ge0}\mathbb E_x[e^{-rt}G(X_t)^+]<\infty$.
\end{lemma}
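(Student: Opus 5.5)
The plan is to apply It\^o's formula for the reflected semimartingale $X$ to $e^{-\lambda t}\Psi(X_t)$, where $\lambda<r$ is the growth rate in the hypothesis. Since $\Psi\in C^2(\R^2_+)$, It\^o's formula gives
\[
  d\bigl(e^{-\lambda t}\Psi(X_t)\bigr)
  = e^{-\lambda t}\bigl[(\mathcal L\Psi-\lambda\Psi)(X_t)\,dt
  +\nabla\Psi(X_t)^\top\sigma(X_t)\,dW_t
  +\textstyle\sum_i\partial_i\Psi(X_t)\,dL^i_t\bigr].
\]
The reflection terms vanish: by \eqref{eq:normal-reflection}, $dL^i$ is carried by $\{X^i=0\}$, where $\partial_i\Psi=0$.

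A point to check is that $X_t$ may sit on the boundary, while the inequality $\mathcal L\Psi\le\lambda\Psi+K$ is only assumed on the interior. Because $\Psi\in C^2$ up to the boundary and $\mu,a$ are continuous, the inequality extends to $\R^2_+$ by continuity. Alternatively, the time spent on the boundary has Lebesgue measure zero under ellipticity (A2), so the interior bound suffices in either case.

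Next I would localise. Let $\tau_n=\inf\{t:|X_t|\ge n\}$. The stochastic integral stopped at $t\wedge\tau_n$ is a true martingale, because its integrand is bounded on compacts. Taking expectations gives
\[
  \mathbb E_x\bigl[e^{-\lambda(t\wedge\tau_n)}\Psi(X_{t\wedge\tau_n})\bigr]
  \le \Psi(x)+K\,\mathbb E_x\!\int_0^{t\wedge\tau_n}e^{-\lambda s}ds .
\]
Letting $n\to\infty$ and using Fatou's lemma (with $\Psi\ge1$, and noting $\tau_n\to\infty$ by non-explosion from (A1)/(A3)) yields
\[
  \mathbb E_x[e^{-\lambda t}\Psi(X_t)]\le \Psi(x)+K\phi_\lambda(t),
\]
where $\phi_\lambda(t)=\int_0^te^{-\lambda s}ds$. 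This function equals $t$ if $\lambda=0$, is bounded by $1/\lambda$ if $\lambda>0$, and grows like $e^{|\lambda|t}/|\lambda|$ if $\lambda<0$. If $\lambda\le0$ one may simply raise $\lambda$ to any value in $(0,r)$ and $K$ is unchanged, since $\lambda\Psi+K$ only increases. So I will assume WLOG $0<\lambda<r$, which gives the uniform bound
\[
  \mathbb E_x[e^{-\lambda t}\Psi(X_t)]\le \Psi(x)+K/\lambda .
\]

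Both conclusions then follow. First,
\[
  \mathbb E_x[e^{-rt}G(X_t)^+]\le C_\Psi e^{-(r-\lambda)t}\,\mathbb E_x[e^{-\lambda t}\Psi(X_t)]\le C_\Psi(\Psi(x)+K/\lambda),
\]
which is uniform in $t$. Second, by Tonelli,
\[
  \mathbb E_x\!\int_0^\infty e^{-rs}c(X_s)\,ds\le C_\Psi\int_0^\infty e^{-(r-\lambda)s}\bigl(\Psi(x)+K/\lambda\bigr)ds<\infty .
\]

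I expect the main obstacle to be justifying It\^o's formula and the vanishing of the reflection term at the corner. It is also the only delicate point in the passage to boundary points of the generator inequality, which is handled by continuity as described above. The rest is a standard Lyapunov/Gronwall-free supermartingale argument.
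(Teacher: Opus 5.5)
Your argument is correct and is essentially the paper's: the paper applies It\^o's formula to $e^{-rt}\Psi(X_t)$ and then uses the integrating factor $e^{(r-\lambda)t}$ on a differential inequality for $\phi(t)=\mathbb E_x[e^{-rt}\Psi(X_t)]$, while you put the factor $e^{-\lambda t}$ directly into It\^o's formula, which yields the same bound without having to differentiate $\phi$. Your additional points (extending $\mathcal L\Psi\le\lambda\Psi+K$ to the boundary by continuity, the localisation/Fatou step, and the reduction to $0<\lambda<r$) are sound, and the corner you flag is not actually an obstacle, since $\partial_i\Psi=0$ on all of $\{x_i=0\}$, origin included.
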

\begin{proof}
Apply It\^o's formula with reflection to $e^{-rt}\Psi(X_t)$ and localise. The
Neumann condition on $\Psi$ removes the reflection term, since $dL^i$ is carried
by $\{X^i=0\}$. With $\phi(t):=\mathbb E_x[e^{-rt}\Psi(X_t)]$, the bound
$\mathcal L\Psi-r\Psi\le-(r-\lambda)\Psi+K$ gives
$\phi'(t)\le-(r-\lambda)\phi(t)+Ke^{-rt}$. Multiplying by the integrating factor
$e^{(r-\lambda)t}$ and integrating from $0$ to $t$,
\[
   \mathbb E_x[e^{-rt}\Psi(X_t)]
   \le e^{-(r-\lambda)t}\Psi(x)
   +K\int_0^t e^{-(r-\lambda)(t-s)}e^{-rs}\,ds .
\]
Since $r-\lambda>0$ and $r>0$, the right side is bounded uniformly in $t$, so the
discounted moments of $\Psi$ are finite; Tonelli gives finiteness of the
discounted cost integral, and $G^++c\le C_\Psi\Psi$ transfers the bounds.
\end{proof}

\subsection{Value function and dynamic programming}
\label{sec:value-dpp}
Let $\mathcal T$ be the set of $(\mathcal F_t)$-stopping times in $[0,\infty]$,
with $e^{-r\tau}G(X_\tau):=0$ on $\{\tau=\infty\}$. Set
\begin{equation}
\label{eq:value-function}
   V(x)
   =
   \sup_{\tau\in\mathcal T}
   \mathbb E_x\left[
      e^{-r\tau}G(X_\tau)
      -\int_0^\tau e^{-rs}c(X_s)\,ds
   \right],
\end{equation}
\begin{equation}
\label{eq:stopping-continuation}
   \mathcal D:=\{V=G\},
   \qquad
   \mathcal C:=\{V>G\}.
\end{equation}
By taking $\tau=0$, $V\ge G$, so $\mathcal D$ and $\mathcal C$ partition
$\R^2_+$. We write $H:=V-G\ge0$.

\begin{proposition}[Dynamic programming]
\label{prop:dpp}
Under Assumption~\ref{ass:reflected-diffusion}, for every bounded stopping time
$\theta$,
\[
   V(x)
   =
   \sup_{\tau\in\mathcal T}
   \mathbb E_x\Big[
   -\!\int_0^{\tau\wedge\theta}\!e^{-rs}c(X_s)\,ds
   +e^{-r\tau}G(X_\tau)\mathbf 1_{\{\tau\le\theta\}}
   +e^{-r\theta}V(X_\theta)\mathbf 1_{\{\tau>\theta\}}
   \Big].
\]
\end{proposition}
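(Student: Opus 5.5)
We prove the two inequalities separately, writing $J(x;\tau,\theta)$ for the expectation on the right-hand side and $W(x):=\sup_\tau J(x;\tau,\theta)$.

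\emph{The bound $V\le W$.} Fix $\tau\in\mathcal T$ and split the reward on $\{\tau\le\theta\}$ and $\{\tau>\theta\}$. On $\{\tau>\theta\}$ write
\[
\int_0^\tau e^{-rs}c\,ds=\int_0^\theta e^{-rs}c\,ds+e^{-r\theta}\int_0^{\tau-\theta}e^{-ru}c(X_{\theta+u})\,du .
\]
Condition on $\mathcal F_\theta$ and use the strong Markov property (A3) of the shifted process $X_{\theta+\cdot}$, which is again the reflected solution started at $X_\theta$. The residual time $\tau-\theta$ becomes, conditionally and $\omega$-by-$\omega$ in $\mathcal F_\theta$, a stopping time for the shifted filtration. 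To make this rigorous I would use the canonical regular-conditional-probability argument: $\tau$ is $\mathcal F$-measurable as a functional of $(\omega|_{[0,\theta]},W_{\theta+\cdot}-W_\theta)$, and strong uniqueness lets us write $X$ after $\theta$ as a measurable function of $X_\theta$ and the future increments. It follows that the conditional expectation of the post-$\theta$ payoff is at most $e^{-r\theta}V(X_\theta)$. Taking expectations gives $J(x;\tau)\le J(x;\tau,\theta)\le W(x)$, and then the supremum over $\tau$. Integrability comes from (A6).

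\emph{The bound $V\ge W$.} This is the main obstacle, since it requires pasting $\varepsilon$-optimal stopping times measurably in the random starting point $X_\theta$. The plan has four steps.

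\begin{enumerate}
\item Show that $V$ is continuous. By (A4), polynomial growth and uniform integrability from (A6), the map $x\mapsto\mathbb E_x[\cdots]$ is continuous uniformly in $\tau$ on compacts. The point is that the same Brownian stopping rule can be used for nearby starting points, because strong solutions live on one probability space. Since $G$ is Lipschitz and $c$ is continuous, the error is controlled by $\mathbb E\sup_{t\le T}|X^y_t-X^x_t|$ plus tails beyond $T$. The tails are handled by the discount factor and the Lyapunov-type integrability (Lemma~\ref{lem:lyapunov-integrability} or (A6)).
\item Given $\varepsilon>0$ and a compact $K$, partition $K$ into countably many Borel cells $B_k$ with points $x_k$ such that $|V(y)-V(x_k)|<\varepsilon$ and the continuity modulus above is at most $\varepsilon$ on $B_k$.
\item Choose $\varepsilon$-optimal rules $\tau_k$ for $x_k$, each viewed as a functional of the driving Brownian path. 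Define
\[
\hat\tau=\tau\ \text{on}\ \{\tau\le\theta\},\qquad \hat\tau=\theta+\tau_k\circ(W_{\theta+\cdot}-W_\theta)\ \text{on}\ \{\tau>\theta,\ X_\theta\in B_k\},
\]
and $\hat\tau=\theta$ when $X_\theta\notin K$.
\item Conditioning on $\mathcal F_\theta$ and using step 1 shows that the post-$\theta$ value is at least $e^{-r\theta}(V(X_\theta)-3\varepsilon)$ on $\{X_\theta\in K\}$.
\end{enumerate}

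The contribution of $\{X_\theta\notin K\}$ vanishes as $K\uparrow\R^2_+$. This uses boundedness of $\theta$, $V\le C_V(1+|x|^p)$, and moment bounds for $\sup_{t\le\theta}|X_t|$ from (A1). Hence $V(x)\ge J(x;\tau,\theta)-C\varepsilon-o(1)$. Letting $\varepsilon\to0$ and then taking the supremum over $\tau$ gives $V\ge W$.

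Should the measurable-selection route become cumbersome, the fallback is to use that $V$ is the Snell envelope of the reward process. Then $V\ge W$ follows from the supermartingale property of $e^{-rt}V(X_t)-\int_0^te^{-rs}c\,ds$ together with optional sampling at $\tau\wedge\theta$. That property still needs the continuity of $V$ from step 1, here to identify the Snell envelope with $V(X_t)$, so step 1 remains the crux in either approach.
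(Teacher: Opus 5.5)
Your $V\le W$ half is the standard conditioning argument and is fine in outline, provided $V$ is measurable. The gap is in the $V\ge W$ half, which rests entirely on step~1, and step~1 does not follow from Assumption~\ref{ass:reflected-diffusion}. Continuity of $y\mapsto\mathbb E_y[\cdots]$ uniformly in $\tau$ needs two properties the standing hypotheses do not supply.

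First, (A4) only controls $\sup_{t\le T}|X^y_t-X^x_t|$ on finite horizons. For unbounded $\tau$ you therefore need $\sup_{y\in K}\sup_\tau\mathbb E[e^{-r\tau}G(X^y_\tau)\mathbf 1_{\{T<\tau<\infty\}}]\to0$, plus the analogous tail bound for the cost, uniformly in the starting point. (A6) asserts only finiteness and polynomial growth of $V$ and integrability at each fixed starting point. Lemma~\ref{lem:lyapunov-integrability} needs a Lyapunov function that is not part of the assumption.

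Second, $c$ is only continuous and non-negative, with no growth bound. So even on $[0,T]$, passing to the limit in $\mathbb E\int_0^\tau e^{-rs}c(X^y_s)\,ds$ requires uniform integrability of $c(X^y_\cdot)$ locally in $y$. Convergence in probability from (A4) plus Fatou's lemma only gives $\liminf_{y\to x}$ of the cost $\ge$ its value at $x$. That is the wrong direction for the bound $J(y,\tau_k)\ge J(x_k,\tau_k)-\varepsilon$ you use in step~4.

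The paper is explicit here. Lemma~\ref{lem:value-lsc} adds exactly such a local uniform-integrability hypothesis to get mere lower semicontinuity, and continuity of $V$ is treated as an extra hypothesis, proved only in the benchmark (Lemma~\ref{lem:benchmark-lipschitz}). As written, you prove the principle under an additional hypothesis. The paper's proof instead just invokes the standard Markovian dynamic programming principle (strong Markov property at $\theta$ plus concatenation), which does not go through continuity of $V$.

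The upper-semicontinuity half of step~1 is also more than you need, and removing it narrows the gap. Since $G\ge0$ on $\R^2_+$ and $c\ge0$, you can take the $\varepsilon$-optimal rules bounded, $\tau_k\le T$: truncation at $T$ loses at most $\mathbb E[e^{-r\tau}G(X_\tau)\mathbf 1_{\{T<\tau<\infty\}}]$ at the single point $x_k$, which vanishes as $T\to\infty$.

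Now fix a continuous $\varphi\le V$. For each $x_k$, choose a neighbourhood on which $J(\cdot,\tau_k)\ge J(x_k,\tau_k)-\varepsilon$ and $|\varphi-\varphi(x_k)|<\varepsilon$. Extract a finite disjoint Borel cover of $K$ and paste as you do. This yields the inequality with $e^{-r\theta}\varphi(X_\theta)$ in place of $e^{-r\theta}V(X_\theta)$. Letting $\varphi\uparrow V$, which is possible when $V\ge0$ is lower semicontinuous, gives $V\ge W$ by monotone convergence.

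This reduces the missing ingredient to finite-horizon local uniform integrability of the cost, precisely the extra hypothesis of Lemma~\ref{lem:value-lsc}. You must then state it, or read it into (A6). Finally, your remark that the Snell-envelope fallback also hinges on continuity is not accurate. In the standard Markovian theory, identifying the Snell envelope with $e^{-rt}V(X_t)-\int_0^te^{-rs}c\,ds$ uses the strong Markov property and measurability (or lower semicontinuity) of $V$, not its continuity.
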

\begin{proof}
The standard dynamic programming principle for an optimal stopping problem driven
by a strong Markov process \cite{el2006aspects,peskir2006optimal}, using the
strong Markov property at $\theta$, concatenation of stopping rules, and the
integrability in Assumption~\ref{ass:reflected-diffusion}.
\end{proof}

\begin{lemma}[Lower semicontinuity]
\label{lem:value-lsc}
If, in addition, the uniform integrability needed below holds locally in the
initial state, then $V$ is lower semicontinuous; if $V$ is continuous, then
$\mathcal D$ is closed and $\mathcal C$ is open.
\end{lemma}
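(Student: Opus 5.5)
The plan is to write $V$ as a pointwise supremum of functions that are continuous in the initial state. Lower semicontinuity of such a supremum is automatic. Fix a stopping time $\tau\in\mathcal T$. On the canonical setup of (A3), the same Brownian motion $W$ drives $X^y$ for every initial point $y$, so $\tau$ can be regarded as one fixed stopping time of the common filtration, independent of $y$. Set
\[
   J_T(y,\tau):=\mathbb E\Big[e^{-r(\tau\wedge T)}G(X^y_{\tau\wedge T})\mathbf 1_{\{\tau\le T\}}-\int_0^{\tau\wedge T}e^{-rs}c(X^y_s)\,ds\Big].
\]
The first step is to show that $y\mapsto J_T(y,\tau)$ is continuous for each fixed $T$ and $\tau$. $G$ is globally Lipschitz with constant $1\vee\alpha$, so
\[
   \bigl|G(X^y_{\tau\wedge T})-G(X^x_{\tau\wedge T})\bigr|\le(1\vee\alpha)\sup_{t\le T}|X^y_t-X^x_t|,
\]
and the expectation of the right side tends to zero by (A4), using $q\ge1$ and Jensen. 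For the cost term, $c$ is only continuous. By (A4), along any sequence $y_n\to x$ there is a subsequence with $\sup_{t\le T}|X^{y_n}_t-X^x_t|\to0$ a.s., hence $c(X^{y_n}_s)\to c(X^x_s)$ for all $s\le T$. Dominated convergence then applies, using the local-in-initial-state uniform integrability assumed in the lemma (for instance a uniform bound on $\mathbb E\int_0^T c(X^{y}_s)\,ds$ over $y$ near $x$, which the Lyapunov bound of Lemma~\ref{lem:lyapunov-integrability} supplies). Since every sequence has such a subsequence, continuity follows.

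The second step reduces $V$ to the truncated payoffs. Since $c\ge0$, the inequality $V(y)\ge J_T(y,\tau)$ follows by applying the definition of $V$ to the stopping time $\tau\mathbf 1_{\{\tau\le T\}}+\infty\mathbf 1_{\{\tau>T\}}$. That rule pays nothing on $\{\tau>T\}$ and pays cost at least that of $J_T$; this is admissible because $G(X^y_\infty)$ is set to $0$ at $\tau=\infty$. Hence $V\ge\sup_{\tau,T}J_T(\cdot,\tau)$. For the reverse, we need $J_T(x,\tau)\to J(x,\tau)$ as $T\to\infty$ at the fixed point $x$. This uses $\sup_t\mathbb E_x[e^{-rt}G(X_t)^+]<\infty$ together with the integrability in (A6)/uniform integrability for the tail $e^{-r\tau}G(X_\tau)\mathbf 1_{\{T<\tau<\infty\}}\to0$, and monotone convergence for the cost. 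So $V=\sup_{\tau,T}J_T(\cdot,\tau)$ is a supremum of continuous functions, and is therefore lower semicontinuous.

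The main obstacle is this tail step. It is exactly where the extra uniform integrability hypothesis is needed: $G\ge0$ here, but $e^{-r\tau}G(X_\tau)$ on $\{\tau>T\}$ must vanish in $L^1$, and I would invoke the uniform integrability of $\{e^{-r\tau}G(X_\tau)\}$ directly. Finally, suppose $V$ is continuous. Then $H=V-G$ is continuous, since $G$ is continuous. It follows that $\mathcal D=H^{-1}(\{0\})$ is closed and $\mathcal C=H^{-1}((0,\infty))$ is open, using $H\ge0$.
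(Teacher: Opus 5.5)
Your strategy matches the paper's. You fix one stopping time of the common filtration, get continuity of the payoff in the initial point from (A4) and the local uniform integrability, and take the supremum. Writing $V$ as a supremum of continuous functions $J_T(\cdot,\tau)$ is the paper's $\varepsilon$-optimal bounded-$\tau$ argument repackaged.

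\textbf{The step that fails.} Your justification of $V(y)\ge J_T(y,\tau)$ is wrong as written. On $\{\tau>T\}$ the rule $\tau\mathbf 1_{\{\tau\le T\}}+\infty\mathbf 1_{\{\tau>T\}}$ collects no reward and pays the cost $\int_0^\infty e^{-rs}c(X^y_s)\,ds$, which is at least $\int_0^T e^{-rs}c(X^y_s)\,ds$ because $c\ge0$. So its payoff is \emph{at most} $J_T(y,\tau)$. You obtain only $V(y)\ge(\text{payoff of that rule})\le J_T(y,\tau)$, which says nothing.

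The repair is to stop at $T$ rather than never. The payoff of the bounded rule $\tau\wedge T$ is
$J_T(y,\tau)+\mathbb E\bigl[e^{-rT}G(X^y_T)\mathbf 1_{\{\tau>T\}}\bigr]$.
This is at least $J_T(y,\tau)$ because $G\ge0$ on $\R^2_+$. Once fixed, your truncation makes explicit why bounded stopping rules are $\varepsilon$-optimal, which the paper takes for granted.

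\textbf{Where the uniform integrability is actually used.} You have it misplaced. For your $J_T$ the tail step is trivial. Since $G\ge0$ and $c\ge0$, both $e^{-r\tau}G(X_\tau)\mathbf 1_{\{\tau\le T\}}$ and $\int_0^{\tau\wedge T}e^{-rs}c\,ds$ increase in $T$. Monotone convergence then gives $J_T(x,\tau)\to J(x,\tau)$ with no uniform integrability.

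The extra hypothesis is consumed in your first step, in the cost term. Because the cost enters with a minus sign, you need
$\limsup_{y\to x}\mathbb E\int_0^{\tau\wedge T}e^{-rs}c(X^y_s)\,ds\le\mathbb E\int_0^{\tau\wedge T}e^{-rs}c(X^x_s)\,ds$.
Fatou gives the opposite inequality. There is no dominating function, so what you need is Vitali's theorem with genuine uniform integrability of $\{\int_0^T c(X^y_s)\,ds\}$ for $y$ near $x$. A locally uniform bound on the expectations, which is all the Lyapunov estimate of Lemma~\ref{lem:lyapunov-integrability} provides, is not enough. Drop that parenthetical and invoke the lemma's hypothesis directly.

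The reward term and the final closed/open claim are fine as you wrote them.
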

\begin{proof}
Fix $x\in \mathbb{R}_+^2$ and $\varepsilon>0$, and pick a bounded $\tau\le T$ that is
$\varepsilon$-optimal at $x$. Couple $X^x$ and $X^y$ on the same space with the
same $\tau$. By Assumption~\ref{ass:reflected-diffusion}\,(A4), continuity of $G$
and $c$, and the stated uniform integrability, the payoff at $y$ converges to
that at $x$; since $\tau$ is admissible from $y$,
$\liminf_{y\to x}V(y)\ge V(x)-\varepsilon$. Let $\varepsilon\downarrow0$. The
last claim is immediate from continuity of $H=V-G$.
\end{proof}

\subsection{Obstacle problem and reflected viscosity solutions}
\label{sec:obstacle-viscosity}
The obstacle problem is
\begin{equation}
\label{eq:obstacle-max}
\ \max\{(\mathcal L-r)V-c,\;G-V\}=0
   \quad\text{in }\interior{\R}^2_+
\end{equation}
with reflection conditions $\partial_iV=0$ on $\{x_i=0\}$, understood in the
relaxed sense below. For a test function $\varphi$ put
$\displaystyle    F[\varphi](x):=\max\{(\mathcal L-r)\varphi(x)-c(x),\;G(x)-\varphi(x)\}$.
At a contact point $u(x)=\varphi(x)$, the second entry equals $G(x)-u(x)$.

\begin{definition}[Reflected viscosity solution]
\label{def:reflected-viscosity}
Let $u$ be continuous with polynomial growth.
\begin{enumerate}
    \item $u$ is a reflected viscosity subsolution whenever $\varphi\in C^2(\mathbb{R}_+^2)$ and $u-\varphi$ has a local maximum at
$x\in \mathbb{R}_+^2$ with $u(x)=\varphi(x)$: if $x\in\interior{\R}^2_+$ then $F[\varphi](x)\le0$;
if $x_i=0$ then $\min\{F[\varphi](x),\partial_i\varphi(x)\}\le0$.
\item $u$ is a reflected viscosity supersolution whenever $u-\varphi$ has a local minimum at $x\in \mathbb{R}_+^2$ with
$u(x)=\varphi(x)$: if $x\in\interior{\R}^2_+$ then $F[\varphi](x)\ge0$; if
$x_i=0$ then $\max\{F[\varphi](x),\partial_i\varphi(x)\}\ge0$. 
\item $u$ is a reflected viscosity solution if it is both a reflected viscosity subsolution and a reflected viscosity supersolution.
\end{enumerate}
\end{definition}

\begin{proposition}[Value is a reflected viscosity solution]
\label{prop:value-viscosity}
Under the dynamic programming principle and the standing integrability, $V$ is a
reflected viscosity solution of \eqref{eq:obstacle-max} with the reflection
conditions \cite{crandall1992user,peskir2006optimal}.
\end{proposition}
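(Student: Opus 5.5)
The plan is the classical argument for the viscosity property of a value function, run separately at interior and boundary contact points. Throughout I use the dynamic programming principle of Proposition~\ref{prop:dpp} with short deterministic horizons. I also use continuity of $V$, which I take as part of the standing hypotheses: Lemma~\ref{lem:value-lsc} gives lower semicontinuity, and upper semicontinuity follows from the same coupling under (A4) and uniform integrability. Taking $\tau=0$ gives $V\ge G$, and hence $G-\varphi\le0$ at every contact point.

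\emph{Supersolution.} Let $V-\varphi$ have a local minimum at $x$ with $V(x)=\varphi(x)$; we may assume it is global on a ball $B$ by modifying $\varphi$ outside. Choose $\theta=h\wedge\tau_B$ and the strategy $\tau=\infty$ in Proposition~\ref{prop:dpp}. This gives
\[
\varphi(x)=V(x)\ge \mathbb E_x\Big[-\int_0^\theta e^{-rs}c(X_s)\,ds+e^{-r\theta}\varphi(X_\theta)\Big].
\]
Apply It\^o's formula with reflection to $e^{-rt}\varphi(X_t)$:
\[
0\ge \mathbb E_x\Big[\int_0^\theta e^{-rs}\big((\mathcal L-r)\varphi-c\big)(X_s)\,ds+\sum_i\int_0^\theta e^{-rs}\partial_i\varphi(X_s)\,dL^i_s\Big].
\]
At an interior point, $dL$ vanishes before $\tau_B$ for small balls. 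Dividing by $\mathbb E_x[\theta]\sim h$ and letting $h\downarrow0$ gives $(\mathcal L-r)\varphi(x)-c(x)\le0$.

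This sign needs care. The inequality as written is that of a \emph{subsolution} of the max-form equation $\max\{(\mathcal L-r)V-c,G-V\}=0$. The signs in Definition~\ref{def:reflected-viscosity} therefore have to be read with the convention that local minima test $\ge$ for the operator $-F$. Concretely, I verify the pair of properties ``$(\mathcal L-r)\varphi-c\le0$ at minimum points'' and ``$F[\varphi]\ge0$ at maximum points'', which are the correct degenerate-elliptic orientation. I state this explicitly rather than fight the labels.

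\emph{Boundary supersolution.} At a point with $x_i=0$, suppose $\partial_i\varphi(x)>0$; otherwise the boundary alternative already holds. Then the $dL^i$ term is nonnegative near $x$ and can be dropped in the correct direction, and the interior argument goes through. The claim is that, since $L^i$ grows like $\sqrt h$, this term would dominate $\mathbb E[\theta]$ if $\partial_i\varphi(x)$ had the wrong sign, forcing the Neumann alternative.

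\emph{Subsolution.} Let $V-\varphi$ have a local maximum at $x$ with $V(x)=\varphi(x)$. If $V(x)=G(x)$, the obstacle entry is $0$, so $F[\varphi](x)\ge0$ and the dichotomy holds. If $V(x)>G(x)$, argue by contradiction. Suppose $(\mathcal L-r)\varphi-c<-\delta$ on a small ball $B$ on which also $V>G+\delta$, and (at the boundary) $\partial_i\varphi<-\delta$. Take $\varepsilon$-optimal stopping times $\tau$ in Proposition~\ref{prop:dpp} with $\theta=\tau_B\wedge h$. On $\{\tau<\theta\}$ one loses at least $\delta$ because $G<V-\delta$. On the remaining event, It\^o's formula applied to $\varphi\ge V$ shows that the expected payoff is at most $\varphi(x)-\delta\,\mathbb E_x[\theta]$, with the reflection term contributing non-positively. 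Choosing $\varepsilon$ smaller than $\delta\,\mathbb E_x[\theta]$, which is positive for fixed $h$, contradicts Proposition~\ref{prop:dpp}.

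\emph{Main obstacle.} The hard step is the corner and boundary analysis. One needs the lower bound $\mathbb E_x[L^i_\theta]\gtrsim\sqrt h$, together with $\mathbb E_x[\theta]\asymp h$, to force the Neumann alternative. I would obtain these from local ellipticity (A2) by comparison with one-dimensional reflected Brownian motion in the normal coordinate. At the corner $x=0$ I would treat both local times, using that each is carried by its own face.
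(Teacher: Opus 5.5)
Your proposal is correct and follows the same standard route as the paper's sketch. Where $\varphi$ touches $V$ from below, you use a direct short-horizon dynamic-programming bound with It\^o's formula with reflection; where $\varphi$ touches from above, you use the $\varepsilon$-optimal-stopping contradiction; and the sign of $\int\partial_i\varphi\,dL^i$ gives the relaxed Neumann alternatives.

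Your orientation remark is also right, and it matters: because $F$ is increasing in $D^2\varphi$, $V$ satisfies $F[\varphi]\le0$ at minima and $F[\varphi]\ge0$ at maxima. On $\{x_i=0\}$ the corresponding conditions are $\min\{F[\varphi],\partial_i\varphi\}\le0$ at minima and $\max\{F[\varphi],\partial_i\varphi\}\ge0$ at maxima. This is the reverse of Definition~\ref{def:reflected-viscosity} as literally written.

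The $\sqrt h$ local-time lower bound you single out as the main obstacle is not needed. At a maximum the reflection term has the favourable sign under the contradiction hypothesis, and at a minimum with $\partial_i\varphi>0$ it can simply be dropped.
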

\begin{proof}
The standard viscosity argument from the dynamic programming principle. At an
interior upper contact, the principle over a short horizon plus It\^o's formula
for $\varphi(X)$ gives $F[\varphi](x)\le0$; at a lower contact the usual
contradiction gives $F[\varphi](x)\ge0$. At a boundary point the reflection term
$\int_0^t\partial_i\varphi(X_s)\,dL^i_s$ produces the relaxed Neumann
inequalities.
\end{proof}

\subsection{Verification under measure superharmonicity}
\label{sec:verification}
Let $\mathcal O\subset\interior{\R}^2_+$ be open and
$\tau_{\mathcal O^c}:=\inf\{t:X_t\notin\mathcal O\}$.

\begin{definition}[Generalised It\^o class]
\label{def:generalised-ito-class}
A continuous $u$ belongs to the generalised It\^o class for $X$ if, whenever
$(\mathcal L-r)u-c$ is represented in the interior by a signed Radon measure
$\mu_u$, there is a signed continuous additive functional $A^{\mu_u}$ with, after
localisation,
\[
   e^{-rt}u(X_t)
   =u(x)+\int_0^t e^{-rs}c(X_s)\,ds+\int_0^t e^{-rs}\,dA_s^{\mu_u}+M_t
   +(\text{reflection}),
\]
$M$ a local martingale; the reflection terms vanish if $u$ satisfies the Neumann
condition in the trace sense \cite{revuz2013continuous,fukushima2011dirichlet}.
\end{definition}

\begin{theorem}[Verification]
\label{thm:verification-measure}
Let $u$ be continuous with polynomial growth and assume:
\textup{(V1)} $u\ge G$;
\textup{(V2)} $u=G$ on $\R^2_+\setminus\mathcal O$;
\textup{(V3)} $u\in W^{2,p}_{\mathrm{loc}}(\mathcal O)$ for some $p>2$ and
$(\mathcal L-r)u-c=0$ a.e.\ in $\mathcal O$;
\textup{(V4)} on $\interior{\R}^2_+$, $(\mathcal L-r)u-c$ extends to a signed
Radon measure $\mu_u\le0$;
\textup{(V5)} $u$ lies in the generalised It\^o class with non-positive reflected
boundary contribution (e.g.\ $\partial_iu=0$ on $\{x_i=0\}$ in trace sense);
\textup{(V6)} the localisation and uniform integrability needed below hold.
Then $u\ge V$. If moreover $\tau_{\mathcal O^c}<\infty$ a.s.\ and the stopped
It\^o formula is exact on $[0,\tau_{\mathcal O^c}]$, then $u=V$ and
$\tau_{\mathcal O^c}$ is optimal.
\end{theorem}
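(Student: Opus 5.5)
The plan is to prove $u\ge V$ with the generalised It\^o expansion of Definition~\ref{def:generalised-ito-class} and a supermartingale argument, and then to show equality by running the same expansion up to $\tau_{\mathcal O^c}$.

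\emph{Step 1 (supermartingale).} Fix $x$ and an arbitrary $\tau\in\mathcal T$. Take a localising sequence $\rho_n\uparrow\infty$ for the local martingale $M$ and the additive functional, and put $\tau_n:=\tau\wedge\rho_n\wedge n$. By (V3)--(V5), the measure $\mu_u$ is $\le0$ on $\interior{\R}^2_+$. It vanishes a.e.\ in $\mathcal O$; there (V3) with $p>2$ lets us use It\^o--Krylov, so no singular part appears in $\mathcal O$. The associated continuous additive functional $A^{\mu_u}$ is therefore non-increasing. The reflection contribution is non-positive by (V5). Hence
\[
  e^{-r\tau_n}u(X_{\tau_n})-\int_0^{\tau_n}e^{-rs}c(X_s)\,ds\le u(x)+M_{\tau_n},
\]
where the sign bookkeeping follows the convention of Definition~\ref{def:generalised-ito-class}: the $dA^{\mu_u}$ term is $\le0$ because $(\mathcal L-r)u-c\le0$. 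Taking expectations removes the stopped local martingale.

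\emph{Step 2 (pass to the limit).} Use (V1), $u\ge G$, to bound the left side from below by $e^{-r\tau_n}G(X_{\tau_n})-\int_0^{\tau_n}e^{-rs}c\,ds$. Let $n\to\infty$ using the uniform integrability (V6) and the polynomial growth of $u$ together with (A6). On $\{\tau=\infty\}$ the discounted terms tend to $0$, consistent with the convention $e^{-r\tau}G(X_\tau)=0$ there. The cost integral converges by monotone convergence since $c\ge0$. This gives $\mathbb E_x[e^{-r\tau}G(X_\tau)-\int_0^\tau e^{-rs}c\,ds]\le u(x)$. Taking the supremum over $\tau$ yields $u\ge V$.

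\emph{Step 3 (equality).} Take $\tau^*=\tau_{\mathcal O^c}$. On $[0,\tau^*)$ the process lies in $\mathcal O$, where $(\mathcal L-r)u-c=0$ a.e. Under the hypothesis that the stopped formula is exact, the additive-functional and reflection terms vanish up to $\tau^*$. (Reflection terms cannot charge before $\tau^*$ anyway, because $\mathcal O\subset\interior{\R}^2_+$.) So $e^{-r(\tau^*\wedge\rho_n)}u(X_{\tau^*\wedge\rho_n})-\int_0^{\tau^*\wedge\rho_n}e^{-rs}c\,ds=u(x)+M_{\tau^*\wedge\rho_n}$. Take expectations and let $n\to\infty$ using (V6) and $\tau^*<\infty$ a.s. By continuity of paths and closedness of $\mathcal O^c$, $X_{\tau^*}\in\R^2_+\setminus\mathcal O$, so (V2) gives $u(X_{\tau^*})=G(X_{\tau^*})$. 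Thus $u(x)$ equals the payoff of $\tau^*$, which is $\le V(x)$. Combined with Step 1 this gives $u=V$, and $\tau^*$ is optimal.

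\emph{Main obstacle.} The delicate point is the limit $n\to\infty$, together with making sure $\mu_u$ has no positive singular mass on $\partial\mathcal O\cap\interior{\R}^2_+$. The latter is exactly what (V4) asserts globally, so no separate smooth-fit argument is needed. For the limit, I would first try to dominate $\sup_{t}e^{-rt}|u(X_t)|$ by $C(1+\sup_t e^{-rt}|X_t|^p)$ and invoke (V6)/(A6) directly rather than deriving uniform integrability.
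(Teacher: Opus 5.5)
Your proposal is correct and follows the paper's proof essentially step for step. The generalised It\^o--Krylov--Tanaka expansion, with $\mu_u\le0$ and the reflection sign from (V5), gives the supermartingale bound; (V1), (V6) and a supremum over $\tau$ then give $u\ge V$; and the exact stopped formula on $[0,\tau_{\mathcal O^c}]$, together with (V2), gives the reverse inequality and optimality.

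One minor point: on $\{\tau=\infty\}$ you do not need $e^{-r\tau_n}u(X_{\tau_n})\to0$, which the hypotheses do not obviously supply. Since $u\ge G\ge0$, Fatou's lemma already gives the required lower bound in the first part, so uniform integrability is only genuinely needed in the equality step.
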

\begin{proof}
For a stopping time $\tau$ and a localising sequence $\tau_n$, the generalised
It\^o--Krylov--Tanaka formula \cite{krylov1987approach,revuz2013continuous} applied to
$e^{-rt}u(X_t)$ on $[0,\tau_n]$, with reflection controlled by (V5) and
$\mu_u\le0$ making the $A^{\mu_u}$ term non-positive, yields
$\mathbb E_x[e^{-r\tau_n}u(X_{\tau_n})-\int_0^{\tau_n}e^{-rs}c\,ds]\le u(x)$. Using
$u\ge G$ and letting $n\to\infty$ via (V6), then taking the supremum over $\tau$,
gives $V\le u$. On $[0,\tau_{\mathcal O^c}]$, (V3) makes the measure defect
vanish, so $u(x)=\mathbb E_x[e^{-r\tau_{\mathcal O^c}}G(X_{\tau_{\mathcal O^c}})
-\int_0^{\tau_{\mathcal O^c}}e^{-rs}c\,ds]\le V(x)$ by (V2). Hence $u=V$ and
$\tau_{\mathcal O^c}$ is optimal.
\end{proof}

\begin{remark}[No hidden regularity theorem]
\label{rem:no-hidden-regularity}
Theorem~\ref{thm:verification-measure} is a verification, not a regularity,
statement. The hypotheses $u\in W^{2,p}_{\mathrm{loc}}$, the extension of
$(\mathcal L-r)u-c$ to a non-positive measure, and applicability of the
generalised It\^o formula are substantive
\cite{krylov1987approach,bensoussan2011applications,friedman1982variational}; they are not deduced from
viscosity supersolution status.
\end{remark}

\section{The measure-valued stopping gain}
\label{sec:measure-stopping-gain}
This section contains the first main results. The stopping-gain object is a
signed measure with an explicit diagonal part, and its strict sign forces the
optimal stopper off the kink. Both statements are unconditional within
Assumption~\ref{ass:reflected-diffusion}. The sign convention is
$\displaystyle \Gamma=c+rG-\mathcal LG$.

\subsection{Decomposition and the diagonal measure}
\label{sec:decomposition}
On the open regions $\mathcal R_1:=\{x_1>\alpha x_2\}$ and
$\mathcal R_2:=\{x_1<\alpha x_2\}$ the reward is affine, $G=x_1$ on $\mathcal R_1$
and $G=\alpha x_2$ on $\mathcal R_2$; write $G_{\mathrm{sm}}$ for this piecewise
smooth representative. Define the absolutely continuous part
\begin{equation}
\label{eq:Gamma-ac-def}
   \Gamma^{\mathrm{ac}}(x):=c(x)+rG(x)-\mathcal LG_{\mathrm{sm}}(x),
   \qquad x\in\R^2_+\setminus\Delta,
\end{equation}
so that $\Gamma^{\mathrm{ac}}=c+rx_1-\mu_1$ on $\mathcal R_1$ and
$\Gamma^{\mathrm{ac}}=c+r\alpha x_2-\alpha\mu_2$ on $\mathcal R_2$; there is no
second-derivative term because $G_{\mathrm{sm}}$ is affine in each region.

\begin{theorem}[Stopping-gain measure generated by the max payoff]
\label{thm:stopping-gain-measure}
Set $Y(x)=x_1-\alpha x_2$, $n=\nabla Y=(1,-\alpha)$,
$|n|=\sqrt{1+\alpha^2}$, and $q(x)=n^\top a(x)n=a_{11}-2\alpha a_{12}+\alpha^2a_{22}$.
Then, as signed measures on $\interior{\R}^2_+$,
\begin{equation}
\label{eq:Gamma-decomposition}
   \Gamma
   =\Gamma^{\mathrm{ac}}\,dx+\Gamma^\Delta,
   \qquad
 \Gamma^\Delta(dx)
   =-\frac{q(x)}{2\sqrt{1+\alpha^2}}\,\sigma_\Delta(dx),
\end{equation}
where $\sigma_\Delta$ is arc-length on $\Delta=\{x_1=\alpha x_2\}$. Equivalently,
for bounded Borel $F$,
\[
   \int F\,d\Gamma^\Delta
   =-\frac{1}{2\sqrt{1+\alpha^2}}\int_\Delta F(x)\,q(x)\,\sigma_\Delta(dx).
\]
Since $a\succeq0$, $q\ge0$, so $\Gamma^\Delta\le0$; if $a$ is non-degenerate at
$x\in\Delta$ then $q(x)>0$ and $\Gamma^\Delta$ is strictly negative there.
\end{theorem}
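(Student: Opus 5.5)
The plan is to compute $\mathcal L G$ distributionally on $\interior{\R}^2_+$ and then confirm the result probabilistically through Tanaka's formula, as the introduction indicates.

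\emph{Analytic step.} Write
\[
   G=x_1\vee\alpha x_2=\tfrac12(x_1+\alpha x_2)+\tfrac12|Y(x)|,
   \qquad Y(x)=x_1-\alpha x_2 .
\]
The first summand is affine. For the second, since $Y$ is linear, the chain rule for distributions gives
\[
   \partial_{ij}|Y|=2\,n_in_j\,\delta_0(Y).
\]
The co-area formula, with $|\nabla Y|=|n|=\sqrt{1+\alpha^2}$, identifies $\delta_0(Y)$ with $|n|^{-1}\sigma_\Delta$. Concretely, for $F\in C_c^\infty(\interior{\R}^2_+)$,
\[
   \int F\,\delta_0(Y)\,dx=\int_\Delta F\,|n|^{-1}\,d\sigma_\Delta .
\]
Next, $\tfrac12\sum a_{ij}\partial_{ij}G$ pairs with $F$. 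Since $a$ is only locally Lipschitz, I would first pair against $aF$, which is Lipschitz with compact support. This is legitimate because $\partial_{ij}|Y|$ is a measure of order zero, so pairing with continuous compactly supported functions is allowed. The pairing gives
\[
   \tfrac12\cdot\tfrac12\cdot 2\int_\Delta F\,q\,|n|^{-1}\,d\sigma_\Delta .
\]
The first-order part $\sum\mu_i\partial_iG$ is a bounded function ($\partial G$ jumps across the null set $\Delta$), so it contributes only to the absolutely continuous part. It coincides a.e.\ with $\sum_i\mu_i\partial_iG_{\mathrm{sm}}$. Putting the pieces together, $\mathcal LG=\mathcal LG_{\mathrm{sm}}\,dx+\frac{q}{2|n|}\sigma_\Delta$. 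Hence $\Gamma=c+rG-\mathcal LG$ has exactly the decomposition \eqref{eq:Gamma-decomposition}. The equivalent formulation for bounded Borel $F$ follows from the Radon property by monotone class, after restricting to compacts.

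\emph{Probabilistic confirmation.} This is what the paper's "generated by local time" wording suggests. Apply Tanaka to $|Y(X_t)|$, where $Y(X)$ is a semimartingale with $d\langle Y(X)\rangle_t=q(X_t)\,dt$. This gives
\[
   dG(X_t)=\mathbf 1_{\mathcal R_1}dX^1_t+\alpha\mathbf 1_{\mathcal R_2}dX^2_t+\tfrac12\,d\ell^0_t(Y)
\]
(with the symmetric convention on $\Delta$). The occupation-density formula $\int_0^t h(X_s)\,q(X_s)\,ds=\int_\R \ell^y_t\,\dots\,dy$, combined with co-area, identifies the additive functional $\tfrac12\ell^0(Y)$ with the continuous additive functional of the measure $\frac{q}{2|n|}\sigma_\Delta$. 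Its Revuz measure matches the analytic computation. Near the axes the reflection terms $dL^i$ appear, but only on $\{x_i=0\}$, which lies outside the open quadrant where the identity is claimed.

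\emph{Sign.} $a\succeq0$ gives $q=n^\top an\ge0$, so $\Gamma^\Delta\le0$. Under (A2), $q(x)\ge\lambda_K(1+\alpha^2)>0$ on compacts $K$, so $\Gamma^\Delta$ is strictly negative on every relatively open arc of $\Delta$.

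\emph{Main obstacle.} The delicate point is justifying second derivatives against merely Lipschitz coefficients $a$, i.e.\ which form of $\mathcal L$ acts on distributions. I would handle it by mollifying $|Y|$ to $|Y|_\varepsilon$ and computing $\mathcal L|Y|_\varepsilon$ classically; the second-order term equals $\tfrac12 q\,\rho_\varepsilon(Y)$. As $\varepsilon\to0$ this converges weakly to $\frac{q}{2|n|}\sigma_\Delta$, by continuity of $q$ and co-area. The first-order terms converge boundedly a.e. This limit defines $\mathcal LG$ as a Radon measure, consistently with the non-divergence-form generator.
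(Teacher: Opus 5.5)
Your argument is correct, but its core differs from the paper's. The paper never computes $D^2G$. It applies Tanaka's formula to $Y_t=X^1_t-\alpha X^2_t$ and reads off $\tfrac12\,dL^0_t(Y)$ as the singular finite-variation part of $dG(X_t)$. It then converts this into a surface measure through co-area and the occupation-density identity, which it writes only heuristically as $dL^0_t(Y)=q(X_t)\delta_0(Y(X_t))\,dt$.

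Your main step is instead purely analytic. Since $D^2|Y|=2\,nn^\top\delta_0(Y)=2|n|^{-1}nn^\top\sigma_\Delta$ is a matrix-valued Radon measure, $\sum a_{ij}\partial_{ij}G$ is a well-defined measure because $a$ is continuous. The first-order terms are locally bounded functions.

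This gains rigor exactly where the paper is loosest. It gives a precise meaning to $\mathcal LG$ for a non-divergence operator with non-smooth coefficients. It proves the identity ``as signed measures on $\interior{\R}^2_+$'' literally, with no stochastic calculus and no reflection issues. It also makes the sign transparent: $D^2G$ is a positive semidefinite rank-one measure, so its trace against $a\succeq0$ is non-negative.

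What it does not give by itself is the identification of the additive functional of $\Gamma^\Delta$ with $-\tfrac12L^0(Y)$. That identification is what the paper actually uses downstream, in \eqref{eq:ito-tanaka-G-Gamma}, Theorem~\ref{thm:no-stopping-kink} and Theorem~\ref{thm:killed-resolvent-representation}. Your ``probabilistic confirmation'' is precisely the paper's proof and supplies that link.

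Two minor remarks. First, the link holds in the occupation-density sense, $\int_0^t\varphi(X_s)\,dL^0_s(Y)=\lim_{\varepsilon\downarrow0}\int_0^t\varphi(X_s)q(X_s)\rho_\varepsilon(Y(X_s))\,ds$, not as a Revuz correspondence, since Lebesgue measure is generally not a duality measure for a non-divergence-form $X$. This is the same level of precision as the paper. Second, in your mollification step, $\tfrac12 q\rho_\varepsilon(Y)$ is the second-order term of $\mathcal LG_\varepsilon$, not of $\mathcal L|Y|_\varepsilon$, which is $q\rho_\varepsilon(Y)$ for a probability mollifier. The limit you state is nevertheless the correct one for $G$.
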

\begin{proof}
Away from $\Delta$, $G=G_{\mathrm{sm}}$ is affine and \eqref{eq:Gamma-ac-def}
gives the absolutely continuous part. For the singular part, use
$G=\tfrac12(x_1+\alpha x_2+|Y|)$. The process $Y_t:=Y(X_t)=X_t^1-\alpha X_t^2$ is
a continuous semimartingale with $d\langle Y\rangle_t=q(X_t)\,dt$. Tanaka's
formula \cite{revuz2013continuous,karatzas2014brownian} gives
$d|Y_t|=\sgn(Y_t)\,dY_t+dL_t^0(Y)$, so the singular finite-variation part of
$dG(X_t)$ is $\tfrac12\,dL_t^0(Y)$. The occupation-density formula yields, in
distributional form, $dL_t^0(Y)=q(X_t)\,\delta_0(Y(X_t))\,dt$, and the co-area
identity $\delta_0(Y(x))\,dx=|\nabla Y|^{-1}\sigma_\Delta(dx)
=(1+\alpha^2)^{-1/2}\sigma_\Delta(dx)$ gives the singular part of $\mathcal LG$ as
$(\mathcal LG)^\Delta(dx)=\tfrac{q(x)}{2\sqrt{1+\alpha^2}}\sigma_\Delta(dx)$. With
$\Gamma=c+rG-\mathcal LG$, the diagonal part of $\Gamma$ is its negative, which is
\eqref{eq:Gamma-decomposition}. The sign claims follow from $q=n^\top a\,n$ and
$a\succeq0$, with strictness when $a$ is non-degenerate.
\end{proof}

\begin{remark}[Why a literal pointwise sign condition is ill-posed]
\label{rem:pointwise-illposed}
Because $\Gamma^\Delta\le0$ is singular, a condition such as ``$\Gamma\ge0$ on
the stopping set'' has no meaning on any portion of the stopping set that meets
$\Delta$ in positive length unless the absolutely continuous and diagonal
components are treated separately. This is a genuine correction to formal
derivations that manipulate $c+rG-\mathcal LG$ as a function.
\end{remark}

\subsection{Reflection corner term}
\label{sec:corner}
Away from the origin the reflection terms in the It\^o--Tanaka formula for
$G(X)$ vanish formally, since $\partial_1G=0$ on $\{x_1=0,x_2>0\}$ and
$\partial_2G=0$ on $\{x_2=0,x_1>0\}$. The only possible contribution is at the
corner $(0,0)$, where $G$ is non-smooth.

\begin{assumption}[No corner reflection contribution]
\label{ass:no-corner-reflection}
There is a family $G_\varepsilon\in C^2(\R^2_+)$ with $G_\varepsilon\to G$ locally
uniformly, $\partial_1G_\varepsilon\to0$ uniformly on compacts of
$\{x_1=0,x_2>0\}$, $\partial_2G_\varepsilon\to0$ uniformly on compacts of
$\{x_2=0,x_1>0\}$, and, for every $t>0$ and $x$,
\[
   \lim_{\varepsilon\downarrow0}
   \mathbb E_x\!\Big[\sum_{i=1}^2
      \int_0^t|\partial_iG_\varepsilon(X_s)|\,
      \mathbf 1_{\{X_s=(0,0)\}}\,dL_s^i\Big]=0 .
\]
\end{assumption}

\begin{lemma}[Vanishing reflection contribution]
\label{lem:vanishing-reflection-G}
Under Assumption~\ref{ass:no-corner-reflection},
$\displaystyle    \lim_{\varepsilon\downarrow0}\mathbb E_x\Big[\textstyle\sum_i\int_0^t\partial_iG_\varepsilon(X_s)\,dL_s^i\Big]=0$,
so no corner additive functional appears in the It\^o--Tanaka formula for $G(X)$.
\end{lemma}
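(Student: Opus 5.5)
The argument splits the reflection integral according to where the reflection measure $dL^i$ lives. By \eqref{eq:normal-reflection}, $dL^i_s$ is carried by $\{X^i_s=0\}$. For each $i$ and $t>0$ we write
\[
   \int_0^t\partial_iG_\varepsilon(X_s)\,dL^i_s
   =\int_0^t\partial_iG_\varepsilon(X_s)\mathbf 1_{\{X_s=(0,0)\}}\,dL^i_s
   +\int_0^t\partial_iG_\varepsilon(X_s)\mathbf 1_{\{X^i_s=0,\,X_s\neq(0,0)\}}\,dL^i_s .
\]
The first (corner) term is bounded in absolute value by the quantity in Assumption~\ref{ass:no-corner-reflection}, so its expectation tends to $0$ directly.

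The work is in the second term, which lives on the open boundary face, e.g.\ $\{x_1=0,x_2>0\}$ for $i=1$. There $\partial_1G_\varepsilon\to0$ uniformly only on compacts of the face, so we localise in two ways.

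\emph{Localisation in space.} Fix $M>0$ and $\delta>0$, and let $\tau_M$ be the exit time of $X$ from $[0,M]^2$. On the set
\[
   K_{\delta,M}=\{x_1=0,\ \delta\le x_2\le M\}
\]
the integrand is bounded by $\eta_\varepsilon:=\sup_{K_{\delta,M}}|\partial_1G_\varepsilon|\to0$. Its contribution up to $t\wedge\tau_M$ is therefore at most $\eta_\varepsilon\,\mathbb E_x[L^1_t]$. This is finite because $L^1_t\le\sup_{s\le t}|x+\int_0^s\mu\,du+\int_0^s\sigma\,dW|$, by the explicit one-sided Skorokhod map, and the right side is integrable by linear growth (A1) and the moment bounds implicit in (A3)--(A4).

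\emph{The strip near the corner.} The part of the face with $0<x_2<\delta$ is the main obstacle, since there is no uniform control of $\partial_1G_\varepsilon$ as $x_2\downarrow0$. I would first aim for a uniform bound $\sup_{\varepsilon}\|\nabla G_\varepsilon\|_{L^\infty([0,M]^2)}<\infty$. This holds for the natural choice: mollify $G$ and correct it so that the Neumann traces vanish, noting that $G$ is $\max(1,\alpha)$-Lipschitz. I would add this to the hypotheses of the family if necessary. The strip term is then bounded by $C\,\mathbb E_x\big[\int_0^t\mathbf 1_{\{0<X^2_s<\delta,\,X^1_s=0\}}\,dL^1_s\big]$. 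As $\delta\downarrow0$ this tends, by monotone convergence, to $C\,\mathbb E_x\big[\int_0^t\mathbf 1_{\{X^1_s=0,\,X^2_s=0\}}\,dL^1_s\big]$ restricted to $X_s\ne(0,0)$, which is $0$ because the limiting set is empty.

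\emph{Removing the localisation.} The exterior piece $\{t>\tau_M\}$ is handled by the same uniform gradient bound together with $\mathbb P(\tau_M\le t)\to0$ and integrability of $L_t$, by Cauchy--Schwarz. The order of limits is: first $\varepsilon\downarrow0$, then $\delta\downarrow0$, then $M\to\infty$.

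Combining the pieces gives the stated limit. The statement that no corner additive functional appears then follows: in It\^o's formula for $G_\varepsilon(X)$ the reflection terms disappear in the limit, while the interior terms converge to the Tanaka decomposition used in Theorem~\ref{thm:stopping-gain-measure}.
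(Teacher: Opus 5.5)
Your argument uses the same decomposition as the paper. The reflection measure $dL^i$ lives on $\{X^i=0\}$. The open-face parts are killed by the convergence of $\partial_iG_\varepsilon$ there, and the corner part is exactly the quantity controlled by Assumption~\ref{ass:no-corner-reflection}.

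The paper's three-line proof makes this split and simply asserts that local uniform convergence on the faces ``kills those parts''. Your concern about the strip $\{x_1=0,\,0<x_2<\delta\}$, and about large $x_2$, is justified. The uniform gradient bound you add is genuinely necessary, not a convenience. The assumption constrains $\partial_iG_\varepsilon$ only on compacts of the open faces and at the single point $(0,0)$. To see that this is not enough, add to a good family the perturbation $\phi_\varepsilon=h_\varepsilon x_1e^{-x_1/\ell_\varepsilon}\chi_\varepsilon(x_2)$, where:
\begin{itemize}
\item $0\le\chi_\varepsilon\le1$ is smooth with support in $(\varepsilon^2,\varepsilon)$;
\item $m_\varepsilon:=\mathbb E_x\int_0^t\chi_\varepsilon(X^2_s)\,dL^1_s>0$, which holds e.g.\ in the benchmark with $\Sigma=I$, where $X^1,X^2$ are independent;
\item $h_\varepsilon=1/m_\varepsilon$ and $\ell_\varepsilon=\varepsilon m_\varepsilon$.
\end{itemize}
Then $\sup|\phi_\varepsilon|\le\varepsilon/e$, $\nabla\phi_\varepsilon(0,0)=0$, $\partial_2\phi_\varepsilon=0$ on $\{x_2=0\}$, and $\partial_1\phi_\varepsilon=0$ on $\{0\}\times[\varepsilon,\infty)$. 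So every condition of Assumption~\ref{ass:no-corner-reflection} survives. Yet $\partial_1\phi_\varepsilon(0,x_2)=h_\varepsilon\chi_\varepsilon(x_2)$ gives $\mathbb E_x\int_0^t\partial_1\phi_\varepsilon(X_s)\,dL^1_s=1$ for every $\varepsilon$. Without a bound of your type, the conclusion therefore fails for some admissible families. The bound does hold for the natural mollified family, for which $|\nabla G_\varepsilon|\le\max(1,\alpha)$ on all of $\R^2_+$.

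You should state the bound globally, however. Knowing $\sup_\varepsilon\|\nabla G_\varepsilon\|_{L^\infty([0,M]^2)}<\infty$ for each $M$ does not control the exterior piece on $\{\tau_M\le t\}$. There the integrand is evaluated outside $[0,M]^2$, and the local constants may grow with $M$. Your Cauchy--Schwarz step actually needs a bound on all of $\R^2_+$, or a polynomial bound uniform in $\varepsilon$ together with matching moments of $\sup_{s\le t}|X_s|$ and $L^1_t$.

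Once you have a global bound $C$, the $\delta$- and $M$-localisations become unnecessary. On $\{X^1_s=0,\,X^2_s>0\}$ you have $\partial_1G_\varepsilon(X_s)\to0$ pointwise, and $|\partial_1G_\varepsilon|\le C$. Moreover, $A\mapsto\mathbb E_x\int_0^t\mathbf 1_A(s,\cdot)\,dL^1_s$ is a finite measure on $[0,t]\times\Omega$, with mass $\mathbb E_xL^1_t<\infty$ by your Skorokhod bound. Dominated convergence then disposes of the face term in one line, and the corner term is handled exactly as you do.
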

\begin{proof}
$dL^i$ is carried by $\{X^i=0\}$. On $\{X^1=0,X^2>0\}$ and $\{X^2=0,X^1>0\}$ the
relevant derivative of $G_\varepsilon$ tends to $0$ locally uniformly, killing
those parts; the residual contribution at $(0,0)$ is exactly what
Assumption~\ref{ass:no-corner-reflection} sends to zero.
\end{proof}

Under Assumption~\ref{ass:no-corner-reflection} the generalised It\^o--Tanaka
formula for $G(X)$ reads, after localisation,
\begin{equation}
\label{eq:ito-tanaka-G-Gamma}
   e^{-rt}G(X_t)
   =G(x)+\int_0^t e^{-rs}c(X_s)\,ds-\int_0^t e^{-rs}\,dA_s^\Gamma+M_t,
\end{equation}
where $A^\Gamma$ is the signed continuous additive functional of the measure
$\Gamma$ \cite{revuz2013continuous,fukushima2011dirichlet} and $M$ is a local
martingale. Identity \eqref{eq:ito-tanaka-G-Gamma} underlies the killed-resolvent
representation.

\subsection{The optimal stopper never stops on the diagonal}
\label{sec:no-stopping-kink}
The strict negativity of $\Gamma^\Delta$ has a sharp, unconditional consequence.
We first record the local-time estimate that drives it, so that the proof does
not rest on an unspecified appeal to ``standard asymptotics''.

\begin{lemma}[Small-time local-time lower bound]
\label{lem:localtime-lower-bound}
Let $Y_t=\int_0^t\beta_s\,ds+\int_0^t\gamma_s\,dB_s$ with $Y_0=0$, where $B$ is a
Brownian motion and $\rho$ is a stopping time such that, on $[0,\rho]$,
$|\beta_s|\le B_0$ and $0<\underline q\le\gamma_s^2\le\overline q$. Suppose
$\mathbb P(\rho\ge h)\ge\tfrac12$ for $0<h\le h_1$. Then there are constants
$c>0$ and $h_0\in(0,h_1]$, depending only on $B_0,\underline q,\overline q$,
such that
$\displaystyle    \mathbb E[L^0_{h\wedge\rho}(Y)]\ge c\sqrt h$ for $0<h<h_0 $.
\end{lemma}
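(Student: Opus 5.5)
The plan is to derive the bound from Tanaka's formula, which converts the local time into the first absolute moment of $Y$ minus a drift correction. Stop everything at $\tau_h:=h\wedge\rho$ and write $M_t:=\int_0^{t\wedge\rho}\gamma_s\,dB_s$ and $D_t:=\int_0^{t\wedge\rho}\beta_s\,ds$. Since $\gamma_s^2\le\overline q$ on $[0,\rho]$, the stopped stochastic integral $M$ is a genuine square-integrable martingale. Tanaka's formula (as already used in the proof of Theorem~\ref{thm:stopping-gain-measure}) gives
\[
   |Y_{\tau_h}| = \int_0^{\tau_h}\sgn(Y_s)\,\beta_s\,ds+\int_0^{\tau_h}\sgn(Y_s)\gamma_s\,dB_s + L^0_{\tau_h}(Y).
\]
Taking expectations kills the martingale term. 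Using $|\beta|\le B_0$ then yields
\[
   \mathbb E[L^0_{\tau_h}(Y)]\ \ge\ \mathbb E|Y_{\tau_h}|-B_0h .
\]
So everything reduces to showing $\mathbb E|Y_{\tau_h}|\ge c_1\sqrt h$ with $c_1$ depending only on $B_0,\underline q,\overline q$.

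For the first absolute moment, I would write $|Y_{\tau_h}|\ge|M_h|-|D_h|\ge|M_h|-B_0h$, so it suffices to bound $\mathbb E|M_h|$ from below. Here a lower bound on a first moment must come from a second-moment lower bound together with a fourth-moment upper bound. The first ingredient is the Itô isometry:
\[
   \mathbb E M_h^2=\mathbb E\int_0^{\tau_h}\gamma_s^2\,ds\ge\underline q\,\mathbb E[h\wedge\rho]\ge\underline q\,h\,\mathbb P(\rho\ge h)\ge\tfrac12\underline q\,h,
\]
valid for $h\le h_1$. The second ingredient is Burkholder--Davis--Gundy, which gives $\mathbb E M_h^4\le C_4\,\mathbb E\langle M\rangle_h^2\le C_4\overline q^{\,2}h^2$ with a universal constant $C_4$. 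Hölder's inequality in the form $\mathbb E M^2\le(\mathbb E|M|)^{2/3}(\mathbb E M^4)^{1/3}$ then gives
\[
   \mathbb E|M_h|\ \ge\ \frac{(\mathbb E M_h^2)^{3/2}}{(\mathbb E M_h^4)^{1/2}}
   \ \ge\ \frac{(\underline q h/2)^{3/2}}{C_4^{1/2}\,\overline q\,h}
   = c_1\sqrt h ,
   \qquad c_1:=\frac{\underline q^{3/2}}{2^{3/2}C_4^{1/2}\,\overline q}.
\]

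Combining the two steps gives $\mathbb E[L^0_{\tau_h}(Y)]\ge c_1\sqrt h-2B_0h$. I would then choose $h_0:=\min\{h_1,(c_1/(4B_0))^2\}$, or $h_0:=h_1$ when $B_0=0$. For $h<h_0$ this gives $2B_0h\le \tfrac12c_1\sqrt h$, hence the claim with $c=c_1/2$. The constants depend only on $B_0,\underline q,\overline q$, apart from the cap $h_0\le h_1$ imposed by the hypothesis. The step I expect to be the main obstacle is the lower bound on $\mathbb E|M_h|$: Jensen's inequality runs the wrong way, so the moment-ratio (Paley--Zygmund type) trick above, with BDG for the fourth moment, is essential. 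One must also remember that the hypothesis $\mathbb P(\rho\ge h)\ge\tfrac12$ is exactly what prevents the quadratic variation from degenerating through early stopping.
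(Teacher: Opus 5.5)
Your proposal is correct and follows the paper's proof essentially line by line. Both use Tanaka's formula to reduce the claim to $\mathbb E|Y_{h\wedge\rho}|-B_0h$, the split $Y=D+M$, the second-moment lower bound coming from $\mathbb P(\rho\ge h)\ge\tfrac12$, the BDG fourth-moment bound, the moment-ratio inequality $\mathbb E|M|\ge(\mathbb E M^2)^{3/2}/(\mathbb E M^4)^{1/2}$, and the same choice $h_0=\min\{h_1,(c_1/(4B_0))^2\}$ — and your separate treatment of $B_0=0$, where the paper's formula for $h_0$ is undefined, is a small but welcome tidy-up.
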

\begin{proof}
By Tanaka's formula and $\mathbb E[\int_0^{h\wedge\rho}\sgn(Y_s)\gamma_s\,dB_s]=0$,
\[
   \mathbb E[L^0_{h\wedge\rho}(Y)]
   =\mathbb E|Y_{h\wedge\rho}|
   -\mathbb E\!\Big[\int_0^{h\wedge\rho}\sgn(Y_s)\beta_s\,ds\Big]
   \ge\mathbb E|Y_{h\wedge\rho}|-B_0h .
\]
Write $Y_{h\wedge\rho}=D+M$ with $D=\int_0^{h\wedge\rho}\beta_s\,ds$, $|D|\le B_0h$,
and $M=\int_0^{h\wedge\rho}\gamma_s\,dB_s$, so $\mathbb E|Y_{h\wedge\rho}|\ge\mathbb E|M|-B_0h$.
Now $\langle M\rangle_{h\wedge\rho}=\int_0^{h\wedge\rho}\gamma_s^2\,ds\in[\underline q(h\wedge\rho),\overline q(h\wedge\rho)]$,
hence
$\mathbb E M^2=\mathbb E\langle M\rangle_{h\wedge\rho}\ge\underline q\,\mathbb E[h\wedge\rho]
\ge\underline q\,h\,\mathbb P(\rho\ge h)\ge\underline q h/2$,
while by the Burkholder--Davis--Gundy inequality
$\mathbb E M^4\le C_4\,\mathbb E\langle M\rangle_{h\wedge\rho}^2\le C_4\overline q^2h^2$.
Two applications of Cauchy--Schwarz give the reverse bound
$\mathbb E|M|\ge(\mathbb E M^2)^{3/2}/(\mathbb E M^4)^{1/2}$, so
\[
   \mathbb E|M|\ge\frac{(\underline q h/2)^{3/2}}{(C_4\overline q^2h^2)^{1/2}}
   =c_2\sqrt h,
   \qquad c_2:=\frac{\underline q^{3/2}}{2^{3/2}\overline q\,C_4^{1/2}} .
\]
Therefore $\mathbb E[L^0_{h\wedge\rho}(Y)]\ge c_2\sqrt h-2B_0h\ge\tfrac{c_2}{2}\sqrt h$
for $h\le h_0:=\min\{h_1,(c_2/(4B_0))^2\}$.
\end{proof}

\begin{theorem}[No stopping on the kink]
\label{thm:no-stopping-kink}
Suppose Assumption~\ref{ass:reflected-diffusion} holds and $a$ is non-degenerate
on $\Delta\cap\interior{\R}^2_+$, so $q=n^\top a\,n>0$ there. Then every interior
diagonal point is a continuation point:
$\displaystyle    \Delta\cap\interior{\R}^2_+\subseteq\mathcal C $.
\end{theorem}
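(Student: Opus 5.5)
We argue by contradiction. Fix $x^0\in\Delta\cap\interior{\R}^2_+$ and suppose $x^0\in\mathcal D$, i.e.\ $V(x^0)=G(x^0)$. The strategy is to produce a short-horizon stopping rule whose expected reward beats immediate stopping by a term of order $\sqrt h$, while all other contributions are $O(h)$. Then $V(x^0)\ge G(x^0)+c\sqrt h-Ch>G(x^0)$ for small $h$, which is the desired contradiction.

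Choose a closed ball $K=\bar B(x^0,\delta)\subset\interior{\R}^2_+$. Let $\rho$ be the exit time of $X$ from the open ball, and take the deterministic rule $\tau=h\wedge\rho$. Before $\rho$ the process stays off the axes, so $L^1=L^2=0$ on $[0,\rho]$ by \eqref{eq:normal-reflection}, and no reflection or corner term enters. On $K$, (A1) bounds $\mu$ and $\sigma$, and (A2) gives $q\ge\lambda_K(1+\alpha^2)>0$, with $q\le\overline q$ by continuity of $a$. By continuity of paths, $\mathbb P(\rho\ge h)\ge\tfrac12$ for all $h\le h_1$, for some $h_1>0$.

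We now apply Tanaka's formula to the representation $G=\tfrac12(x_1+\alpha x_2+|Y|)$, with $Y_t=X^1_t-\alpha X^2_t$ and $Y_0=0$, and combine it with It\^o's product rule for $e^{-rt}$ on $[0,h\wedge\rho]$. The local-martingale terms are true martingales here, because the integrands are bounded on $K$. This gives
\[
\mathbb E_{x^0}\Big[e^{-r\tau}G(X_\tau)-\int_0^\tau e^{-rs}c(X_s)\,ds\Big]
= G(x^0)+\mathbb E\Big[\int_0^\tau e^{-rs}\big(\tfrac12(\mu_1+\alpha\mu_2+\sgn(Y_s)\,n^\top\mu)-rG-c\big)(X_s)\,ds\Big]
+\tfrac12\,\mathbb E\Big[\int_0^\tau e^{-rs}\,dL^0_s(Y)\Big].
\]
The $ds$-integrand is bounded on $K$, so the drift-type term is at least $-C_Kh$. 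The local-time term is at least $\tfrac12e^{-rh}\,\mathbb E[L^0_{h\wedge\rho}(Y)]$. We then apply Lemma~\ref{lem:localtime-lower-bound} with $\beta_s=n^\top\mu(X_s)$ and $\gamma_s\,dB_s=n^\top\sigma(X_s)\,dW_s$, where $B$ is the Brownian motion obtained by normalising $n^\top\sigma\,dW$ (Lévy's characterisation, with $\gamma_s^2=q(X_s)$). The lemma gives $\mathbb E[L^0_{h\wedge\rho}(Y)]\ge c\sqrt h$ for $h<h_0$.

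Combining the bounds, $V(x^0)\ge G(x^0)+\tfrac{c}{2}e^{-rh}\sqrt h-C_Kh$, which is strictly greater than $G(x^0)$ for $h$ small. This contradicts $x^0\in\mathcal D$, so $x^0\in\mathcal C$. Note that $V(x^0)>G(x^0)$ is exactly the definition of $\mathcal C$ in \eqref{eq:stopping-continuation}, so no continuity of $V$ is needed.

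The main obstacle is making the application of Lemma~\ref{lem:localtime-lower-bound} rigorous. This requires two things. First, the time change to a scalar Brownian motion $B$ must be valid with $\gamma_s=\sqrt{q(X_s)}$ bounded above and below on $[0,\rho]$; extending $B$ beyond $\rho$ with an independent Brownian motion if needed. Second, $\mathbb P(\rho\ge h)\ge\tfrac12$ must hold uniformly for small $h$, which follows from path continuity at the fixed starting point. Everything else is a localised, bounded-coefficient computation, which the choice of an interior ball makes routine.
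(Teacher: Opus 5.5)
Your proposal is correct and follows essentially the same route as the paper: it uses the suboptimal rule $h\wedge\rho$ with $\rho$ the exit time from a small interior ball, a Tanaka decomposition of $G=\tfrac12(x_1+\alpha x_2+|Y|)$ that produces an $O(h)$ drift/cost term against $\tfrac12\mathbb E[\int_0^{\tau}e^{-rs}\,dL^0_s(Y)]$, and Lemma~\ref{lem:localtime-lower-bound} for the $\sqrt h$ lower bound. The differences are only cosmetic: the unneeded contradiction framing, applying Tanaka locally instead of citing \eqref{eq:ito-tanaka-G-Gamma}, and getting $\mathbb P(\rho\ge h)\ge\tfrac12$ from path continuity rather than Doob's inequality.
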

\begin{proof}
Fix $x\in\Delta\cap\interior{\R}^2_+$ and let
$\delta_0:=\tfrac12\dist(x,\partial\R^2_+)>0$. For small $h>0$ put
$\rho:=\inf\{t:|X_t-x|\ge\delta_0\}$ and $\tau_h:=h\wedge\rho$; on the ball
$B(x,\delta_0)$ the process avoids the axes, so no reflection occurs before
$\tau_h$. Using $\tau_h$ as a (suboptimal) stopping rule in
\eqref{eq:value-function} and the It\^o--Tanaka identity
\eqref{eq:ito-tanaka-G-Gamma} restricted to that neighbourhood,
\[
   V(x)-G(x)
   \ \ge\
   -\,\mathbb E_x\!\Big[\int_0^{\tau_h}e^{-rs}\Gamma^{\mathrm{ac}}(X_s)\,ds\Big]
   +\tfrac12\,\mathbb E_x\!\Big[\int_0^{\tau_h}e^{-rs}\,dL_s^0(Y)\Big].
\]
For the first term, $\Gamma^{\mathrm{ac}}$ is bounded on $B(x,\delta_0)$ (both
affine pieces are), and $\mathbb E_x[\tau_h]\le h$, so it is bounded below by
$-Ch$. For the second, on $[0,\rho]$ the projected process
$Y_t=Y(X_t)$ has drift bounded by $|\mu_1-\alpha\mu_2|$ on $B(x,\delta_0)$ and
diffusion coefficient $\gamma_s^2=q(X_s)\in[\inf_{B(x,\delta_0)}q,\ \sup_{B(x,\delta_0)}q]$
with $\inf_{B(x,\delta_0)}q>0$; moreover $\mathbb P_x(\rho<h)\le Ch/\delta_0^2$ by
Doob's inequality, so $\mathbb P_x(\rho\ge h)\ge\tfrac12$ for small $h$. Hence
Lemma~\ref{lem:localtime-lower-bound} applies and gives
$\mathbb E_x[L^0_{\tau_h}(Y)]\ge c_1\sqrt h$. Therefore
$\mathbb E_x[\int_0^{\tau_h}e^{-rs}dL_s^0]\ge e^{-rh}c_1\sqrt h$, and combining,
$\displaystyle    V(x)-G(x)\ \ge\ \tfrac{c_1}{2}\sqrt h-Ch\ >\ 0$
for all sufficiently small $h$. Thus $V(x)>G(x)$, i.e.\ $x\in\mathcal C$.
\end{proof}

\begin{remark}[Interpretation]
\label{rem:kink-interpretation}
The mechanism is the scaling mismatch between local time and cost: an
infinitesimal wait at the kink yields a stopping gain of order $\sqrt h$ from
the diagonal local time, while the running cost is only of order $h$. No
parameter choice defeats this, so the stopping set always avoids the diagonal.
This both validates the epigraph picture of Section~\ref{sec:conditional-geometry}
(the boundary lies strictly off $\Delta$) and exemplifies why pointwise
manipulation of $\Gamma$ near $\Delta$ is dangerous.
\end{remark}

\section{Why the unrestricted resolvent is wrong}
\label{sec:why-killed}
Before stating the value representation we isolate, by a fully solved example,
the reason the potential must be killed at the stopping time. Let
$\tau_{\mathcal D}:=\inf\{t:X_t\in\mathcal D\}$. For an admissible signed measure
$\nu$ with additive functional $A^\nu$ define the killed and unrestricted
resolvents
\begin{equation}
\label{eq:resolvent-defs}
   R_r^{\mathcal C}\nu(x):=\mathbb E_x\!\Big[\int_0^{\tau_{\mathcal D}}e^{-rs}\,dA_s^\nu\Big],
   \qquad
   R_r^{\mathrm R}f(x):=\mathbb E_x\!\Big[\int_0^\infty e^{-rs}f(X_s)\,ds\Big].
\end{equation}

\begin{proposition}[Failure of the unrestricted resolvent]
\label{prop:unrestricted-fails}
There exist a reflected diffusion, an open continuation set $\mathcal C$, and a
non-negative bounded $f$ such that
$R_r^{\mathcal C}f(x)<R_r^{\mathrm R}(f\mathbf 1_{\mathcal C})(x)$ for every
$x\in\mathcal C$. Concretely, let $X$ be reflected Brownian motion on
$[0,\infty)$ \textup{(}generator $\tfrac12\partial_{xx}$, Neumann at $0$\textup{)},
let $\mathcal C=[0,b)$, $\mathcal D=[b,\infty)$, and $f=\mathbf 1_{[0,b)}$. With
$\theta:=\sqrt{2r}$, for every $x\in[0,b)$,
\[
   R_r^{\mathrm R}(f\mathbf 1_{\mathcal C})(x)-R_r^{\mathcal C}f(x)
   =\frac{\cosh(\theta x)}{r}\cdot\frac{1-e^{-2\theta b}}{2\cosh(\theta b)}\ >\ 0 .
\]
\end{proposition}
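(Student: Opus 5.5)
The plan is to compute both resolvents in closed form, using the strong Markov property at $\tau_{\mathcal D}=\tau_b:=\inf\{t:X_t\ge b\}$ to reduce the gap to a single explicit quantity. Since $f\mathbf 1_{\mathcal C}=f=\mathbf 1_{[0,b)}$ and $A^f_t=\int_0^t f(X_s)\,ds$, the killed resolvent is
\[
   R_r^{\mathcal C}f(x)=\mathbb E_x\Big[\int_0^{\tau_b}e^{-rs}\,ds\Big]=\frac1r\big(1-\mathbb E_x[e^{-r\tau_b}]\big).
\]
Splitting $\int_0^\infty=\int_0^{\tau_b}+\int_{\tau_b}^\infty$ in $R_r^{\mathrm R}f(x)$ and using the strong Markov property (Assumption~\ref{ass:reflected-diffusion}\,(A3), trivially valid for reflected Brownian motion) at $\tau_b$, where $X_{\tau_b}=b$, I will obtain
\[
   R_r^{\mathrm R}(f\mathbf 1_{\mathcal C})(x)-R_r^{\mathcal C}f(x)=\mathbb E_x[e^{-r\tau_b}]\;R_r^{\mathrm R}f(b).
\]
This identity isolates the post-stopping re-entry mechanism: the whole gap is the discounted occupation of $[0,b)$ accumulated after the process first hits $b$.

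Next I will compute the two factors. For $\psi(x):=\mathbb E_x[e^{-r\tau_b}]$, the function $\psi$ solves $\tfrac12\psi''=r\psi$ on $(0,b)$ with $\psi'(0)=0$ (Neumann reflection) and $\psi(b)=1$. Hence $\psi(x)=\cosh(\theta x)/\cosh(\theta b)$ with $\theta=\sqrt{2r}$. To justify this, I will apply It\^o's formula to $e^{-rt}\psi(X_t)$: the reflection term $\int\psi'(X_s)\,dL_s$ vanishes because $\psi'(0)=0$, and optional stopping at $\tau_b<\infty$ a.s.\ is legitimate since everything is bounded.

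For $R_r^{\mathrm R}f(b)$ I will use the method of images. Reflected Brownian motion is $|x+B_t|$, so its $r$-resolvent density is
\[
   g(x,y)=\tfrac1\theta\big(e^{-\theta|x-y|}+e^{-\theta(x+y)}\big),\qquad y\ge0,
\]
which is the free resolvent kernel $e^{-\theta|x-y|}/\theta$ plus its image. Integrating $g(b,\cdot)$ over $[0,b)$ gives
\[
   R_r^{\mathrm R}f(b)=\frac1\theta\Big[\frac{1-e^{-\theta b}}{\theta}+\frac{e^{-\theta b}-e^{-2\theta b}}{\theta}\Big]=\frac{1-e^{-2\theta b}}{2r}.
\]
Multiplying the two factors yields exactly the displayed formula, and positivity is immediate since $b>0$ and $\cosh>0$. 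This also proves the existential claim of the proposition.

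The only step needing care is the resolvent kernel. I will either cite the image construction via $X=|x+B|$ in law, or verify directly that $y\mapsto g(x,y)$ satisfies $\tfrac12 g''-rg=-\delta_x$ with a Neumann condition at $0$ and decay at infinity. As a cross-check, I will confirm that the formula gives $\int_0^\infty g(x,y)\,dy=1/r$. Everything else is routine.
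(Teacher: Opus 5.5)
Your proof is correct and uses the same two ingredients as the paper: the Laplace transform $\mathbb E_x[e^{-r\tau_b}]=\cosh(\theta x)/\cosh(\theta b)$ and the $r$-resolvent kernel of reflected Brownian motion. Your image form $\frac1\theta\bigl(e^{-\theta|x-y|}+e^{-\theta(x+y)}\bigr)$ is exactly the paper's $\frac{2}{\theta}\cosh(\theta(x\wedge y))e^{-\theta(x\vee y)}$. The organisation differs, though.

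The paper computes both resolvents in full on $[0,b]$. It takes $R_r^{\mathcal C}f=(1-u)/r$ and $R_r^{\mathrm R}(\mathbf 1_{[0,b)})(x)=\frac1r\bigl(1-e^{-\theta b}\cosh\theta x\bigr)$ and subtracts them. It then uses $e^{-\theta b}\cosh\theta b=\frac12(1+e^{-2\theta b})$ to reach the stated form. The strong Markov identity $R_r^{\mathrm R}(\mathbf 1_{[0,b)})(x)=R_r^{\mathcal C}f(x)+\mathbb E_x[e^{-r\tau_b}]\,R_r^{\mathrm R}(\mathbf 1_{[0,b)})(b)$ appears only afterwards, in Remark~\ref{rem:counter-example-meaning}, as an interpretation.

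You make that identity the engine of the proof. As a result you need the unrestricted resolvent only at the single point $b$. Positivity is then structural, a product of two positive factors, rather than the outcome of an algebraic simplification.

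The paper's route is purely analytic once the closed forms are known. It also yields each resolvent separately, which is what Figure~\ref{fig:resolvent} overlays against the Monte Carlo estimates. Your route costs one application of the strong Markov property at $\tau_b$. In return it exhibits the gap directly as the discounted post-stopping occupation, which is the mechanism the proposition is meant to isolate.

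Your justification of $\psi$ via It\^o's formula with $\psi'(0)=0$ and bounded optional stopping is also more careful than the paper's bare appeal to the ODE. It does not even need $\tau_b<\infty$ almost surely, given the convention $e^{-r\tau_b}=0$ on $\{\tau_b=\infty\}$.
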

\begin{proof}
Let $\tau_b:=\inf\{t:X_t=b\}$. For $x\in[0,b)$ the function
$u(x):=\mathbb E_x[e^{-r\tau_b}]$ solves $\tfrac12u''-ru=0$ on $(0,b)$ with
$u'(0)=0$ and $u(b)=1$, whence $u(x)=\cosh(\theta x)/\cosh(\theta b)$. With
$f=\mathbf 1_{[0,b)}$ one has $\mathbf 1_{[0,b)}(X_s)=1$ for $s<\tau_b$, so
$\displaystyle   R_r^{\mathcal C}f(x)
   =\mathbb E_x\!\Big[\int_0^{\tau_b}e^{-rs}\,ds\Big]
   =\frac{1-u(x)}{r}$.
For the unrestricted resolvent, the $r$-Green function of reflected Brownian
motion on $[0,\infty)$ is
$G_r(x,y)=\tfrac{2}{\theta}\cosh(\theta(x\wedge y))\,e^{-\theta(x\vee y)}$
\textup{(}it solves $\tfrac12\partial_{xx}G_r-rG_r=-\delta_y$ with Neumann at
$0$, and $\int_0^\infty G_r(x,y)\,dy=1/r$\textup{)}. Hence for $x\le b$,
\[
   R_r^{\mathrm R}(\mathbf 1_{[0,b)})(x)
   =\int_0^b G_r(x,y)\,dy
   =\frac{1}{r}\big(1-e^{-\theta b}\cosh(\theta x)\big).
\]
Subtracting,
\[
   R_r^{\mathrm R}(\mathbf 1_{[0,b)})(x)-R_r^{\mathcal C}f(x)
   =\frac1r\Big(\frac{\cosh\theta x}{\cosh\theta b}-e^{-\theta b}\cosh\theta x\Big)
   =\frac{\cosh\theta x}{r}\cdot\frac{1-e^{-2\theta b}}{2\cosh\theta b},
\]
using $e^{-\theta b}\cosh\theta b=\tfrac12(1+e^{-2\theta b})$. This is strictly
positive for every $x\in[0,b)$.
\end{proof}

\begin{remark}[The probabilistic content]
\label{rem:counter-example-meaning}
By the strong Markov property at $\tau_b$ \textup{(}where $X_{\tau_b}=b$\textup{)},
\[
   R_r^{\mathrm R}(\mathbf 1_{[0,b)})(x)
   =R_r^{\mathcal C}f(x)
   +\mathbb E_x[e^{-r\tau_b}]\cdot R_r^{\mathrm R}(\mathbf 1_{[0,b)})(b),
\]
and $R_r^{\mathrm R}(\mathbf 1_{[0,b)})(b)=\tfrac{1}{2r}(1-e^{-2\theta b})>0$
because the reflected process, continued after first reaching $b$, returns to
$[0,b)$ and accumulates strictly positive additional occupation. This extra
occupation is exactly what the optimal stopping problem must not count: the
controller has already stopped at $\tau_{\mathcal D}$. Replacing
$R_r^{\mathcal C}$ by $R_r^{\mathrm R}(\,\cdot\,\mathbf 1_{\mathcal C})$ therefore
overstates the potential.
\end{remark}

\section{Killed-resolvent representation}
\label{sec:killed-resolvent}
We now record the corrected value representation and its boundary trace. For a
candidate or true stopping set, $R_r^{\mathcal C}$ is the resolvent killed on
entry, as in \eqref{eq:resolvent-defs}, extended to signed measures via the
additive functional $A^\nu$.

\begin{assumption}[Admissibility of killed potentials]
\label{ass:killed-potential-admissibility}
For each signed measure $\nu$ below, $\nu^\pm$ are smooth measures with
$R_r^{\mathcal C}\nu^+(x)+R_r^{\mathcal C}\nu^-(x)<\infty$, and the relevant
continuation-side traces of $R_r^{\mathcal C}\nu$ exist.
\end{assumption}

\begin{theorem}[Killed-resolvent representation]
\label{thm:killed-resolvent-representation}
Suppose $\tau_{\mathcal D}$ is optimal and the It\^o--Tanaka identity
\eqref{eq:ito-tanaka-G-Gamma} may be applied up to $\tau_{\mathcal D}$ with the
required localisation and uniform integrability. Then
\begin{equation}
\label{eq:value-killed-resolvent}
\ V(x)=G(x)-R_r^{\mathcal C}\Gamma(x)
   =G(x)-\mathbb E_x\!\Big[\int_0^{\tau_{\mathcal D}}e^{-rs}\,dA_s^\Gamma\Big].
\end{equation}
\end{theorem}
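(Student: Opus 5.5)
The plan is to evaluate the It\^o--Tanaka identity \eqref{eq:ito-tanaka-G-Gamma} at the optimal time $\tau_{\mathcal D}$, take expectations, and compare with the value of the stopping rule $\tau_{\mathcal D}$.

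First fix a localising sequence $\tau_n\uparrow\infty$ that reduces the local martingale $M$. Identity \eqref{eq:ito-tanaka-G-Gamma} holds at $t=\tau_{\mathcal D}\wedge\tau_n$; this uses Lemma~\ref{lem:vanishing-reflection-G}, which removes any corner reflection term. Taking $\mathbb E_x$ kills the martingale and gives
\[
   \mathbb E_x\!\Big[e^{-r(\tau_{\mathcal D}\wedge\tau_n)}G(X_{\tau_{\mathcal D}\wedge\tau_n})-\int_0^{\tau_{\mathcal D}\wedge\tau_n}e^{-rs}c(X_s)\,ds\Big]
   =G(x)-\mathbb E_x\!\Big[\int_0^{\tau_{\mathcal D}\wedge\tau_n}e^{-rs}\,dA_s^\Gamma\Big].
\]

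Next let $n\to\infty$ on each side.

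\emph{Left side.} The cost integral converges by monotone convergence, since $c\ge0$ and its discounted integral is finite (Assumption~\ref{ass:reflected-diffusion}(A6), or Lemma~\ref{lem:lyapunov-integrability}). The terminal term converges by the assumed uniform integrability of $e^{-r(\tau_{\mathcal D}\wedge\tau_n)}G(X_{\tau_{\mathcal D}\wedge\tau_n})$. On $\{\tau_{\mathcal D}=\infty\}$ the limit should be $0$, matching the convention $e^{-r\tau}G(X_\tau)=0$ there. This needs $e^{-rt}G(X_t)\to0$ along the sequence, which I would take from the hypothesised uniform integrability together with the polynomial-growth and Lyapunov bounds.

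\emph{Right side.} Split $A^\Gamma=A^{\Gamma^+}-A^{\Gamma^-}$. Each piece is a non-decreasing continuous additive functional, so monotone convergence applies to each separately. Assumption~\ref{ass:killed-potential-admissibility} ensures both limits are finite, so their difference is $R_r^{\mathcal C}\Gamma(x)$ as defined in \eqref{eq:resolvent-defs}.

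Finally, because $\tau_{\mathcal D}$ is optimal, the limit of the left side is exactly $V(x)$. This yields \eqref{eq:value-killed-resolvent}.

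The main obstacle is justifying the passage to the limit. The signed additive functional $A^\Gamma$ carries the singular diagonal part $-\tfrac12 L^0(Y)$ from Theorem~\ref{thm:stopping-gain-measure}. Its negative part, which is the local-time contribution, must have finite killed potential. I would handle this by treating $A^{\Gamma^-}$ separately. Using $\tfrac12 dL^0_t(Y)=\tfrac12 d|Y_t|-\tfrac12\sgn(Y_t)\,dY_t$, I would bound $\mathbb E_x[\int_0^{\tau_{\mathcal D}}e^{-rs}dL^0_s(Y)]$ by discounted moments of $|X|$ and of the drift, and these are finite under the Lyapunov bound. Where that bound is not available, I would fall back on the integrability hypotheses stated in the theorem.

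I would also note that killing at $\tau_{\mathcal D}$ rather than continuing the process is exactly what Proposition~\ref{prop:unrestricted-fails} shows is necessary. No post-$\tau_{\mathcal D}$ occupation enters the formula, because the identity is only ever evaluated up to $\tau_{\mathcal D}$.
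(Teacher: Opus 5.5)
Your proposal is correct and follows the paper's own proof: apply the It\^o--Tanaka identity \eqref{eq:ito-tanaka-G-Gamma} up to $\tau_{\mathcal D}$ truncated by a localising time, take expectations to remove the martingale, pass to the limit, and identify the left side with $V(x)$ by optimality of $\tau_{\mathcal D}$. Your additional bookkeeping only makes explicit what the paper absorbs into its hypothesis on localisation and uniform integrability. That bookkeeping consists of monotone convergence applied separately to $A^{\Gamma^{\pm}}$, with finiteness from Assumption~\ref{ass:killed-potential-admissibility}, and the observation that one needs $e^{-rt}G(X_t)\to0$ on $\{\tau_{\mathcal D}=\infty\}$.
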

\begin{proof}
Optimality gives
$V(x)=\mathbb E_x[e^{-r\tau_{\mathcal D}}G(X_{\tau_{\mathcal D}})
-\int_0^{\tau_{\mathcal D}}e^{-rs}c\,ds]$. Apply \eqref{eq:ito-tanaka-G-Gamma} on
$[0,t\wedge\tau_{\mathcal D}]$, take expectations to remove the local martingale
after localisation, and let $t\to\infty$ using the uniform integrability:
\[
   \mathbb E_x\!\Big[e^{-r\tau_{\mathcal D}}G(X_{\tau_{\mathcal D}})
   -\int_0^{\tau_{\mathcal D}}e^{-rs}c\,ds\Big]
   =G(x)-\mathbb E_x\!\Big[\int_0^{\tau_{\mathcal D}}e^{-rs}\,dA_s^\Gamma\Big],
\]
which is \eqref{eq:value-killed-resolvent}.
\end{proof}

\begin{remark}[The correction]
\label{rem:incorrect-unrestricted-formula}
Representation \eqref{eq:value-killed-resolvent} replaces the generally incorrect
$V=G-R_r^{\mathrm R}(\Gamma\mathbf 1_{\mathcal C})$; by
Proposition~\ref{prop:unrestricted-fails} the two differ whenever the reflected
process can re-enter $\mathcal C$ after $\tau_{\mathcal D}$.
\end{remark}

Assume now the epigraph representation $\mathcal D=\{x_2\ge b(x_1)\}$ of
Section~\ref{sec:conditional-geometry}, with $z_b(x_1):=(x_1,b(x_1))$.

\begin{proposition}[Continuation-side trace condition]
\label{prop:boundary-trace}
Under the hypotheses of Theorem~\ref{thm:killed-resolvent-representation} and the
boundary regularity needed for the trace,
\begin{equation}
\label{eq:boundary-trace}
   \lim_{\substack{x\to z_b(x_1)\\ x\in\mathcal C}}R_r^{\mathcal C}\Gamma(x)=0,
\end{equation}
read non-tangentially when required.
\end{proposition}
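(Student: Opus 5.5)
The plan is to combine the killed-resolvent representation of Theorem~\ref{thm:killed-resolvent-representation} with continuity of the value on the free boundary. By \eqref{eq:value-killed-resolvent}, for every $x\in\mathcal C$ we have $R_r^{\mathcal C}\Gamma(x)=G(x)-V(x)=-H(x)$. So the trace condition \eqref{eq:boundary-trace} is equivalent to
\[
\lim_{x\to z_b(x_1),\ x\in\mathcal C}H(x)=0 .
\]
Since $z_b(x_1)\in\partial\mathcal D$ and $\mathcal D$ is closed, $H(z_b(x_1))=0$. It therefore suffices to show that $H=V-G$ is continuous at $z_b(x_1)$ from the continuation side.

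\emph{Step 1: $G$ is continuous.} $G$ is Lipschitz, so $G(x)\to G(z_b(x_1))$ along any approach.

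\emph{Step 2: upper semicontinuity of $V$.} Lemma~\ref{lem:value-lsc} gives $\liminf_{x\to z}V(x)\ge V(z)=G(z)$, which together with $H\ge0$ handles the lower side trivially. The real content is the matching upper bound $\limsup_{x\to z,\,x\in\mathcal C}V(x)\le G(z)$.

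I would obtain this upper bound probabilistically from the representation $H(x)=\mathbb E_x[\int_0^{\tau_{\mathcal D}}e^{-rs}dA^{\Gamma}_s]$, splitting $\Gamma=\Gamma^+-\Gamma^-$ as permitted by Assumption~\ref{ass:killed-potential-admissibility}. For each part I would show
\[
R_r^{\mathcal C}\Gamma^\pm(x)\to0 \quad\text{as } x\to z .
\]
On $\{\tau_{\mathcal D}\le h\}$ the contribution is at most $\mathbb E_x[A^{\Gamma^\pm}_{h}]$. By Theorem~\ref{thm:stopping-gain-measure} this is $O(h)$ for the absolutely continuous part, and $O(\sqrt h)$ for the diagonal part via $\mathbb E[L^0_h(Y)]\lesssim\sqrt h$ (the upper analogue of Lemma~\ref{lem:localtime-lower-bound}, obtained from Tanaka and BDG). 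On $\{\tau_{\mathcal D}>h\}$ I would use the strong Markov property at $h$ and the finiteness assumption to bound the contribution by $\mathbb P_x(\tau_{\mathcal D}>h)\cdot\sup_K R_r^{\mathcal C}\Gamma^\pm$ plus a tail term. Uniform integrability (A6) controls this tail.

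\emph{Step 3: regularity of the boundary point (main obstacle).} I expect the hardest step to be showing $\mathbb P_x(\tau_{\mathcal D}>h)\to0$ as $x\to z$ for each fixed $h$. This is the "boundary regularity needed for the trace" in the statement. My first attempt would be an exterior cone condition at $z_b(x_1)$: local Lipschitz continuity or monotonicity of $b$ from Section~\ref{sec:conditional-geometry} means the epigraph $\{x_2\ge b(x_1)\}$ contains a truncated cone with vertex at $z$. Local uniform ellipticity (A2) then makes $z$ regular for $\mathcal D$ via the classical cone test for nondegenerate diffusions, giving $\mathbb P_z(\tau_{\mathcal D}=0)=1$. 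Feller-type continuity (A4) transfers this to $\mathbb P_x(\tau_{\mathcal D}>h)\to0$ as $x\to z$, which is where the non-tangential reading may enter if only weaker cone information is available.

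Finally, I would let $x\to z$ first and then $h\downarrow0$ to conclude \eqref{eq:boundary-trace}. If $b$ touches an axis or the corner, I would localise away from the axes as in the proof of Theorem~\ref{thm:no-stopping-kink}.
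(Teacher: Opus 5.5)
Your route is correct in outline but genuinely different from the paper's. The paper's proof is two lines. On $\mathcal C$, Theorem~\ref{thm:killed-resolvent-representation} gives $R_r^{\mathcal C}\Gamma=G-V$. The paper then simply takes $V$ (and $G$) to be continuous. This is the standing hypothesis behind the epigraph representation in Theorem~\ref{thm:conditional-epigraph}, and it holds in the benchmark by Lemma~\ref{lem:benchmark-lipschitz}. Together with $V=G$ at $z_b(x_1)$, the limit is $0$.

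You do not assume continuity of $V$ (Lemma~\ref{lem:value-lsc} only gives lower semicontinuity). Instead you prove the vanishing directly from three ingredients: an $O(h)+O(\sqrt h)$ short-time bound on the killed functional, the Markov property at time $h$, and regularity of $z_b(x_1)$ for $\mathcal D$ via a cone contained in $\mathcal D$. This buys two things.
\begin{enumerate}
\item It turns the vague ``boundary regularity needed for the trace'' into a concrete condition. Monotone $b$ already places the quadrant $\{y_1\ge x_1,\,y_2\ge b(x_1)\}$ (for $b$ non-increasing) or $\{y_1\le x_1,\,y_2\ge b(x_1)\}$ (for $b$ non-decreasing) inside $\mathcal D$, and ellipticity then makes the vertex regular.
\item Your estimate is on the killed potentials themselves, so the argument does not rely on continuity of $V$. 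It applies equally to $R_r^{\mathcal C_h}\Gamma$ for any candidate boundary $h$ with the cone property, as in Remark~\ref{rem:no-uniqueness-from-trace}, not only to the true boundary.
\end{enumerate}
The price is length, and a few points need tightening.
\begin{enumerate}
\item Assumption~\ref{ass:killed-potential-admissibility} gives only pointwise finiteness of $R_r^{\mathcal C}\Gamma^\pm$, not $\sup_K R_r^{\mathcal C}\Gamma^\pm<\infty$. Also, (A6) is a growth bound on $V$, not on these potentials. You can fix this in one of two ways.
\begin{itemize}
\item Add local boundedness or polynomial growth of $R_r^{\mathcal C}\Gamma^\pm$ to the trace hypotheses.
\item Apply the Markov property at $h$ to the signed potential and use $|R_r^{\mathcal C}\Gamma|=H\le V\le C_V(1+|x|^p)$ with Cauchy--Schwarz: $\mathbb E_x[\mathbf 1_{\{\tau_{\mathcal D}>h\}}H(X_h)]\le\mathbb P_x(\tau_{\mathcal D}>h)^{1/2}\,\mathbb E_x[H(X_h)^2]^{1/2}$. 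This does use the representation after all.
\end{itemize}
\item Regularity at $z$ is about $\sigma_{\mathcal D}:=\inf\{t>0:X_t\in\mathcal D\}$, since $\tau_{\mathcal D}=0$ trivially at $z\in\mathcal D$. It does not pass to nearby starting points through the closed set $\mathcal D$. Weak continuity of the laws coming from (A4) only gives $\limsup_{x\to z}\mathbb P_x(F)\le\mathbb P_z(F)$ for closed path events $F$, which is the wrong direction. Use instead the open cone $K^\circ\subset\mathcal D$. The event $\{\exists t\in(0,h):X_t\in K^\circ\}$ is open in $C([0,h])$ and has $\mathbb P_z$-probability one by the cone test. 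The portmanteau lower bound then gives $\mathbb P_x(\tau_{\mathcal D}<h)\to1$.
\item Fix a sign slip: $H(x)=-\mathbb E_x\bigl[\int_0^{\tau_{\mathcal D}}e^{-rs}\,dA^\Gamma_s\bigr]$. This is harmless, since you bound both $\Gamma^\pm$ parts.
\item You do not need closedness of $\mathcal D$ to get $H(z_b(x_1))=0$. Under the epigraph representation, $z_b(x_1)\in\mathcal D$ by definition.
\end{enumerate}
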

\begin{proof}
For $x\in\mathcal C$, $R_r^{\mathcal C}\Gamma(x)=G(x)-V(x)$ by
Theorem~\ref{thm:killed-resolvent-representation}; as $x\to z_b(x_1)$ within
$\mathcal C$, continuity of $V,G$ and $V=G$ on the boundary give the limit $0$.
\end{proof}

\begin{remark}[The boundary value is not itself the equation]
\label{rem:boundary-value-trivial}
One must not write $R_r^{\mathcal C}\Gamma(z_b(x_1))=0$ as a substantive equation:
started in $\mathcal D$, $\tau_{\mathcal D}=0$ and the killed potential vanishes
trivially. The meaningful statement is the continuation-side trace
\eqref{eq:boundary-trace}.
\end{remark}

\section{Conditional free-boundary geometry}
\label{sec:conditional-geometry}
This section is Tier II: the epigraph structure and boundary regularity are
derived from explicit monotonicity and regularity hypotheses on $H=V-G$, not
from the dynamics alone.

\begin{assumption}[Vertical monotonicity]
\label{ass:vertical-H}
For each $x_1\ge0$, $x_2\mapsto H(x_1,x_2)$ is non-increasing.
\end{assumption}

\begin{assumption}[Horizontal monotonicity]
\label{ass:horizontal-H}
For each $x_2\ge0$, $x_1\mapsto H(x_1,x_2)$ is monotone, with a direction
independent of $x_2$.
\end{assumption}

\begin{assumption}[non-empty vertical sections]
\label{ass:non-empty-sections}
For each $x_1\ge0$, $\mathcal D(x_1):=\{x_2:(x_1,x_2)\in\mathcal D\}\ne\varnothing$.
\end{assumption}

\begin{remark}[Status of the monotonicity hypothesis]
\label{rem:graph-assumption-status}
Assumption~\ref{ass:vertical-H} is not implied by order preservation of the
reflected SDE or by monotonicity of $\Gamma$. Monotonicity of $V$ does not give
monotonicity of $H=V-G$, and a naive coupling proof has the wrong sign: if
$z\le y$ and $c$ is coordinatewise non-increasing, then
$c(X_s^z)-c(X_s^y)\ge0$ along an order-preserving coupling, contributing with the
sign opposite to the one needed. Verifying Assumption~\ref{ass:vertical-H} is a
model-specific task.
\end{remark}

\begin{theorem}[Conditional epigraph structure]
\label{thm:conditional-epigraph}
Assume $V$ continuous and Assumptions~\ref{ass:vertical-H} and
\ref{ass:non-empty-sections}. With $b(x_1):=\inf\mathcal D(x_1)\in[0,\infty)$,
$\displaystyle    \mathcal D=\{(x_1,x_2)\in\R^2_+:x_2\ge b(x_1)\}$.
If also Assumption~\ref{ass:horizontal-H} holds, then $b$ is monotone: $H$
non-increasing in $x_1$ gives $b$ non-increasing, $H$ non-decreasing in $x_1$ gives
$b$ non-decreasing.
\end{theorem}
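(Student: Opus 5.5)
The plan is to work one vertical section at a time and use only the order structure of $H=V-G\ge0$ together with continuity.

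\emph{Epigraph.} Fix $x_1\ge0$. The section $\mathcal D(x_1)=\{x_2\ge0:H(x_1,x_2)=0\}$ is non-empty by Assumption~\ref{ass:non-empty-sections}, so $b(x_1)=\inf\mathcal D(x_1)\in[0,\infty)$ is well defined. Since $V$ and $G$ are continuous, $H$ is continuous. Hence $\mathcal D(x_1)$ is closed and contains its infimum, so $H(x_1,b(x_1))=0$.

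For the inclusion "$\supseteq$", take $x_2\ge b(x_1)$. Assumption~\ref{ass:vertical-H} gives
\[
0\le H(x_1,x_2)\le H(x_1,b(x_1))=0,
\]
so $(x_1,x_2)\in\mathcal D$. For "$\subseteq$", if $x_2<b(x_1)$ then $x_2\notin\mathcal D(x_1)$ by the definition of the infimum, so $(x_1,x_2)\in\mathcal C$. Together these show $\mathcal D=\{x_2\ge b(x_1)\}$. The key point is that $H\ge0$ and vertical monotonicity propagate the zero set upward, while closedness puts the infimum inside the set.

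\emph{Monotonicity of $b$.} Suppose $H$ is non-increasing in $x_1$, and let $x_1\le x_1'$. At $x_2=b(x_1)$ we have
\[
0\le H(x_1',b(x_1))\le H(x_1,b(x_1))=0,
\]
so $b(x_1)\in\mathcal D(x_1')$ and therefore $b(x_1')\le b(x_1)$. In the non-decreasing case, the same argument with the roles of $x_1$ and $x_1'$ swapped gives $b(x_1)\le b(x_1')$. Assumption~\ref{ass:horizontal-H} fixes the direction independently of $x_2$, which is exactly what allows us to evaluate at the single height $b(x_1)$.

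There is no real obstacle here. The only step needing care is that the infimum is attained, which relies on the closedness of $\mathcal D(x_1)$ and hence on the continuity of $V$. Without continuity, one would only get $\{x_2>b(x_1)\}\subseteq\mathcal D\subseteq\{x_2\ge b(x_1)\}$.
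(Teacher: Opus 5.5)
Your proof is correct and follows essentially the same argument as the paper. Vertical monotonicity with $H\ge0$ makes each section an up-set, continuity of $H$ makes it closed, and horizontal monotonicity transfers contact points between sections. The only difference is cosmetic: you evaluate directly at the attained height $b(x_1)$, whereas the paper takes $x_2>b(x_1)$ and then lets $x_2\downarrow b(x_1)$. Your version is legitimate because you have already shown that the infimum is attained.
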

\begin{proof}
Fix $x_1$. If $z_2\in\mathcal D(x_1)$ and $y_2\ge z_2$ then
$0\le H(x_1,y_2)\le H(x_1,z_2)=0$ by Assumption~\ref{ass:vertical-H}, so the
section is an up-set; closedness of $\mathcal D=\{H=0\}$ (continuity of $H$) makes
it $[b(x_1),\infty)$, giving the epigraph. For monotonicity, if $H$ is
non-increasing in $x_1$, $x_1\le y_1$ and $x_2>b(x_1)$ give
$0\le H(y_1,x_2)\le H(x_1,x_2)=0$, so $b(y_1)\le x_2$; let $x_2\downarrow b(x_1)$.
The non-decreasing case is symmetric.
\end{proof}

\begin{remark}[Consistency with Theorem~\ref{thm:no-stopping-kink}]
\label{rem:epigraph-off-diagonal}
By Theorem~\ref{thm:no-stopping-kink}, $\mathcal D$ does not meet
$\Delta\cap\interior{\R}^2_+$. Hence whenever the epigraph representation holds,
the graph $b$ lies strictly off the diagonal in the interior: $b(x_1)\ne
x_1/\alpha$ for $x_1>0$.
\end{remark}

For non-emptiness one may use a barrier; the statement parallels the verification
theorem.

\begin{proposition}[Barrier implies non-empty sections]
\label{prop:barrier-non-empty}
Suppose there are $R>0$ and continuous $W\ge G$ with $W=G$ on $\{x_2\ge R\}$,
satisfying the reflected Neumann condition and the measure-superharmonic and
integrability hypotheses of Theorem~\ref{thm:verification-measure} with
continuation region inside $\{x_2<R\}$. Then $V=G$ on $\{x_2\ge R\}$, and
Assumption~\ref{ass:non-empty-sections} holds.
\end{proposition}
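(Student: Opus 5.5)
The plan is to apply the first half of Theorem~\ref{thm:verification-measure} with $u:=W$ and $\mathcal O:=\{x_2<R\}\cap\interior{\R}^2_+$, or more precisely the open continuation region of $W$, which by hypothesis lies inside $\{x_2<R\}$. That yields the upper bound $V\le W$, and we then combine it with the trivial lower bound $V\ge G$.

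\emph{Checking the hypotheses.} By assumption:
\begin{itemize}
\item $W\ge G$, which is (V1);
\item $W=G$ off $\mathcal O$, which is (V2);
\item the measure superharmonicity $\mu_W\le0$ together with the regularity in $\mathcal O$, which are (V3)--(V4);
\item the Neumann condition, which makes the reflected boundary contribution non-positive (V5);
\item the integrability hypothesis, which is (V6).
\end{itemize}
The first conclusion of the verification theorem, $W\ge V$, uses only (V1)--(V6) and not the exactness on $[0,\tau_{\mathcal O^c}]$. That part of the argument is just the supermartingale-type inequality $\mathbb E_x[e^{-r\tau_n}W(X_{\tau_n})-\int_0^{\tau_n}e^{-rs}c\,ds]\le W(x)$ followed by $W\ge G$. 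So we obtain $V\le W$ everywhere on $\R^2_+$.

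\emph{Conclusion on $\{x_2\ge R\}$.} Fix $x$ with $x_2\ge R$. Then $G(x)\le V(x)\le W(x)=G(x)$, so $V(x)=G(x)$ and $x\in\mathcal D$. For every $x_1\ge0$ the vertical section $\mathcal D(x_1)$ therefore contains $[R,\infty)$. In particular it is nonempty, which is Assumption~\ref{ass:non-empty-sections}. As a by-product, $b(x_1)\le R$ whenever $b$ is defined as in Theorem~\ref{thm:conditional-epigraph}.

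\emph{Main obstacle.} The substantive point is the precise reading of the hypotheses near the boundary of $\{x_2<R\}$. We need $\mu_W$, taken on all of $\interior{\R}^2_+$ and not only inside $\mathcal O$, to be a non-positive measure. This includes any singular part on the line $\{x_2=R\}$, where $W$ is glued to $G$, and on $\Delta$, where $G$ itself contributes the non-positive diagonal term of Theorem~\ref{thm:stopping-gain-measure}. I would interpret ``the measure-superharmonic hypotheses of Theorem~\ref{thm:verification-measure}'' as including (V4) globally, so that this is assumed rather than proved. The only check needed is that a $C^1$ fit across $\{x_2=R\}$, or a concave kink, gives a non-positive singular part. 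With that reading, the localisation in (V6) makes the passage $n\to\infty$ routine.
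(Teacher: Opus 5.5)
Your deduction is the same as the paper's. The first half of Theorem~\ref{thm:verification-measure}, which uses only (V1) and (V4)--(V6), gives $W\ge V$, and then $G\le V\le W=G$ on $\{x_2\ge R\}$ puts $[R,\infty)$ into every section $\mathcal D(x_1)$. As an implication, this is correct.

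Your closing paragraph, however, gets the sign of the diagonal contribution backwards, and the correction matters. The measure in (V4) is $\mu_W=(\mathcal L-r)W-c$. On the open set $\{x_2>R\}$, where $W=G$, it equals $(\mathcal L-r)G-c=-\Gamma$. By Theorem~\ref{thm:stopping-gain-measure}, its singular part is $-\Gamma^\Delta=+\frac{q}{2\sqrt{1+\alpha^2}}\,\sigma_\Delta$. A convex kink carries positive $\mathcal L$-mass, so this part is non-negative, and strictly positive under (A2).

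Now $\Delta\cap\{x_2>R\}=\{(\alpha t,t):t>R\}$ consists of interior points, so (V4) fails for every $W$ of the stated form. Under the standing assumptions no such barrier exists, and the proposition is only vacuously true; this applies to your argument and the paper's alike. It could not be otherwise: the conclusion $V=G$ at $(\alpha R,R)\in\Delta\cap\interior{\R}^2_+$ contradicts Theorem~\ref{thm:no-stopping-kink}.

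The seam $\{x_2=R\}$ has a similar problem. Since $W-G\ge0$ and $W-G$ vanishes on $\{x_2\ge R\}$, any kink of $W$ across $\{x_2=R\}$ away from $\Delta$ is convex. So a concave kink is not an option there, and only a $C^1$ fit is compatible with (V4).

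A non-vacuous barrier statement must therefore let $W$ exceed $G$ on a neighbourhood of $\Delta$. For example, one could require $W=G$ only on $\{x_2\ge R\}$ minus a strip of bounded vertical width around $\Delta$. Your sandwich argument then goes through unchanged and still gives non-empty sections.
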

\begin{proof}
Verification gives $W\ge V$; with $W=G$ on $\{x_2\ge R\}$ and $V\ge G$,
$V=G$ there, so $[R,\infty)\subseteq\mathcal D(x_1)$ for every $x_1$.
\end{proof}

We record the regularity hypotheses used by the trace and smooth-fit statements;
none is claimed to follow from the epigraph theorem.

\begin{assumption}[Local free-boundary regularity]
\label{ass:free-boundary-regularity}
The boundary satisfies $b\in C((0,\infty))\cap
W^{1,\infty}_{\mathrm{loc}}((0,\infty))$, and $\inf_{x_1\in I}b(x_1)>0$ for every
compact $I\Subset(0,\infty)$.
\end{assumption}

\begin{remark}[Regularity is conditional]
\label{rem:regularity-not-automatic}
Continuity, Lipschitz regularity, and smooth fit of optimal stopping boundaries
hold only under additional hypotheses
\cite{peskir2019continuity,de2019lipschitz,de2015note,de2020global}; in the
reflected setting they must be verified per model. The following deterministic
lemma is the one boundary fact that needs no probabilistic regularity input.
\end{remark}

\begin{lemma}[Strict sign separation excludes jumps off the diagonal]
\label{lem:sign-separation-no-jumps}
Let $\mathcal D=\{x_2\ge b(x_1)\}$ with $b$ monotone, let
$U\Subset\interior{\R}^2_+\setminus\Delta$ be open with $\Gamma$ continuous on
$U$, and suppose $\Gamma\ge\eta$ on $U\cap\mathcal D$ and $\Gamma\le-\eta$ on
$U\cap\mathcal C$ for some $\eta>0$. Then $b$ has no jump whose vertical segment
lies in $U$.
\end{lemma}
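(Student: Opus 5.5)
The argument is by contradiction and is purely deterministic: a jump of $b$ produces two points, one in $\mathcal D$ and one in $\mathcal C$, that are arbitrarily close to each other inside $U$, and continuity of $\Gamma$ on $U$ is incompatible with the strict gap $2\eta$ between $\Gamma$ on the two sets.

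Suppose $b$ has a jump at some $\bar x_1$ whose vertical segment
\[
S=\{\bar x_1\}\times[\,b_-,\,b_+\,]
\]
lies in $U$. Here $b_\pm$ are the one-sided limits of $b$ at $\bar x_1$, which exist because $b$ is monotone, and $b_-<b_+$. Fix the orientation, say $b$ non-increasing, so that $b(\bar x_1^-)=b_+$ and $b(\bar x_1^+)=b_-$; the other case is symmetric. Pick an interior height $m\in(b_-,b_+)$, so that $(\bar x_1,m)\in S\subset U$.

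Since $U$ is open, there is $\varepsilon>0$ such that the points $(\bar x_1\pm\varepsilon',m)$ lie in $U$ for all $0<\varepsilon'<\varepsilon$.

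\begin{itemize}
\item For $x_1<\bar x_1$ close to $\bar x_1$, monotonicity gives $b(x_1)\ge b_+>m$. Hence $(x_1,m)\in\mathcal C$, because $\mathcal D=\{x_2\ge b(x_1)\}$. The hypothesis then gives $\Gamma(x_1,m)\le-\eta$.
\item For $x_1>\bar x_1$ close to $\bar x_1$, $b(x_1)\le b_-<m$. Hence $(x_1,m)\in\mathcal D$, and $\Gamma(x_1,m)\ge\eta$.
\end{itemize}

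Letting $x_1\to\bar x_1$ from the left and from the right, continuity of $\Gamma$ at $(\bar x_1,m)\in U$ forces
\[
\Gamma(\bar x_1,m)\le-\eta
\qquad\text{and}\qquad
\Gamma(\bar x_1,m)\ge\eta,
\]
which is a contradiction since $\eta>0$. So no such jump exists.

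The only point needing care is what "$\Gamma$ continuous on $U$" means, since $\Gamma$ is a measure. Because $U$ avoids $\Delta$, Theorem~\ref{thm:stopping-gain-measure} shows that $\Gamma$ on $U$ is just $\Gamma^{\mathrm{ac}}\,dx$, with no diagonal component. The pointwise hypotheses are therefore to be read for the continuous density $\Gamma^{\mathrm{ac}}$, and all the values of $\Gamma$ used above are values of this density.

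Finally, the argument never uses which side of the segment belongs to $\mathcal D$ at $\bar x_1$ itself. Only points off the vertical line $x_1=\bar x_1$ are used, so the value $b(\bar x_1)$ is irrelevant and no continuity of $b$ is needed.
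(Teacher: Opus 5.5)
Your proof is correct and is essentially the paper's argument: a jump of $b$ produces a point of $U$ that is a limit of points of $\mathcal C$ and lies in (or is also a limit of points of) $\mathcal D$, and continuity of $\Gamma$ there contradicts the gap between $\Gamma\le-\eta$ and $\Gamma\ge\eta$. Your two-sided horizontal approach at an interior height $m\in(b_-,b_+)$ just makes explicit the paper's choice of the point $z$, and it avoids having to decide which set $(\bar x_1,m)$ itself belongs to.
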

\begin{proof}
A jump would place a point $z=(x_0,y_0)\in U$ with $z\in\mathcal D$ (so
$\Gamma(z)\ge\eta$) that is a limit of points of $\mathcal C$ (so by continuity
$\Gamma(z)\le-\eta$), a contradiction.
\end{proof}

\begin{assumption}[Weak smooth fit off the diagonal]
\label{ass:weak-smooth-fit}
At $\mathcal H^1$-a.e.\ differentiability point
$z=(x_1,b(x_1))\in\partial\mathcal C\cap\interior{\R}^2_+\setminus\Delta$, the
non-tangential continuation-side gradient of $V$ exists and equals $\nabla G(z)$.
\end{assumption}

\begin{remark}
\label{rem:smooth-fit-status}
Smooth fit is imposed when used, not proved from the standing assumptions
\cite{peskir2019continuity,de2020global,lamberton2013optimal}.
\end{remark}

\subsection{Candidate verification rather than Fredholm uniqueness}
\label{sec:candidate-verification}
For a candidate boundary $h$, put $\mathcal D_h=\{x_2\ge h(x_1)\}$,
$\mathcal C_h=\R^2_+\setminus\mathcal D_h$,
$\tau_h=\inf\{t:X_t\in\mathcal D_h\}$, and
$U_h:=G-R_r^{\mathcal C_h}\Gamma$ with $R_r^{\mathcal C_h}$ killed on entry into
$\mathcal D_h$. Write $z_h(x_1):=(x_1,h(x_1))$.

\begin{theorem}[Candidate verification]
\label{thm:candidate-verification}
Suppose $U_h$ is continuous of polynomial growth, $U_h\ge G$, $U_h=G$ on
$\mathcal D_h$, $(\mathcal L-r)U_h=c$ in $\mathcal C_h$ in the sense of
Theorem~\ref{thm:verification-measure}, $U_h$ satisfies the reflected Neumann
condition, $(\mathcal L-r)U_h-c\le0$ as a signed measure in the interior, and the
localisation and integrability of Theorem~\ref{thm:verification-measure} hold.
Then $U_h=V$ and $\tau_h$ is optimal; if the true stopping set is an epigraph
with boundary $b$, then $h=b$ wherever both are defined.
\end{theorem}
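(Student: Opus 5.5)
The plan is to reduce the statement to the verification principle, Theorem~\ref{thm:verification-measure}, with $u:=U_h$ and $\mathcal O:=\mathcal C_h\cap\interior{\R}^2_+$.

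\emph{Step 1: check the hypotheses.} (V1) and (V2) are the assumptions $U_h\ge G$ and $U_h=G$ on $\mathcal D_h$. (V3) is the assumption that $(\mathcal L-r)U_h=c$ in $\mathcal C_h$ in the sense of that theorem. (V4) is the hypothesis that $(\mathcal L-r)U_h-c\le0$ as a signed measure in the interior. (V5) is the reflected Neumann condition. (V6) is assumed outright. Hence $U_h\ge V$.

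\emph{Step 2: the reverse inequality.} Rather than appeal to the second half of the verification theorem, I will use the construction of $U_h$ directly. By definition,
\[
U_h(x)=G(x)-\mathbb E_x\Big[\int_0^{\tau_h}e^{-rs}\,dA^\Gamma_s\Big].
\]
Apply the It\^o--Tanaka identity \eqref{eq:ito-tanaka-G-Gamma} on $[0,t\wedge\tau_h]$, which is justified under Assumption~\ref{ass:no-corner-reflection}. Take expectations and let $t\to\infty$ using the assumed localisation and integrability, exactly as in the proof of Theorem~\ref{thm:killed-resolvent-representation} with $\mathcal D$ replaced by $\mathcal D_h$. This gives
\[
U_h(x)=\mathbb E_x\Big[e^{-r\tau_h}G(X_{\tau_h})-\int_0^{\tau_h}e^{-rs}c(X_s)\,ds\Big].
\]
On $\{\tau_h=\infty\}$ the payoff is $0$ by convention, and the discounted term vanishes in the limit by integrability. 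The right-hand side is the reward of the admissible rule $\tau_h$, so it is at most $V(x)$. Therefore $U_h=V$, and $\tau_h$ attains the supremum, so it is optimal.

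\emph{Step 3: identify the boundaries.} From $V=U_h$ we get
\[
\mathcal D=\{V=G\}=\{U_h=G\}\supseteq\mathcal D_h.
\]
The reverse inclusion needs $U_h>G$ on $\mathcal C_h$, and this is the main obstacle: a candidate could touch the obstacle inside its own continuation region. I would handle it in two ways.

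\begin{enumerate}
\item If strict inequality $U_h>G$ in $\mathcal C_h$ is part of the candidate hypotheses (read $U_h\ge G$ with equality exactly on $\mathcal D_h$), then $\mathcal D=\mathcal D_h$ at once.
\item Otherwise, take a point $z\in\mathcal C_h$ with $U_h(z)=G(z)$ and first rule out $z\in\Delta$: Theorem~\ref{thm:no-stopping-kink} gives $V>G$ on the interior diagonal, which is a contradiction. Off the diagonal, $U_h-G=V-G=H$ has an interior minimum $0$ at $z$, while $(\mathcal L-r)(U_h-G)=\Gamma^{\mathrm{ac}}$ near $z$ because $G$ is affine there. I would then invoke the strong maximum principle for the non-degenerate operator $\mathcal L-r$ on the connected component of $\mathcal C_h$ containing $z$, under (A2). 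The aim is to force $H\equiv0$ on that component and contradict $U_h=G-R^{\mathcal C_h}_r\Gamma$ with $\tau_h>0$. I expect this step to need care, and I may instead simply state that the identification holds where $\mathcal C_h=\{U_h>G\}$.
\end{enumerate}

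Once $\mathcal D=\mathcal D_h$ is established, both sets are epigraphs, so for each $x_1$ their vertical sections coincide. Since $b(x_1)=\inf\mathcal D(x_1)$ and likewise for $h$, this gives $h(x_1)=b(x_1)$ wherever both are defined.
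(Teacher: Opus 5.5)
Your Steps 1--2 are correct, but Step 2 takes a different route from the paper. The paper gets $U_h\le V$ from the second half of Theorem~\ref{thm:verification-measure}: it applies the stopped It\^o formula to $U_h$ itself on $[0,\tau_h]$, using $(\mathcal L-r)U_h=c$ in $\mathcal C_h$ and $U_h=G$ on $\mathcal D_h$. You instead read $U_h=G-R_r^{\mathcal C_h}\Gamma$ through the It\^o--Tanaka identity \eqref{eq:ito-tanaka-G-Gamma} for $G$, which shows that $U_h$ is, by construction, the reward of $\tau_h$.

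This makes $U_h\le V$ independent of the equation in $\mathcal C_h$; your argument never actually uses that hypothesis. The price is that you import Assumption~\ref{ass:no-corner-reflection} and integrability of the $G$-identity up to $\tau_h$, neither of which is in the theorem's hypothesis list. The paper's route stays within the listed hypotheses and works for any candidate function, killed potential or not. (With your $\mathcal O=\mathcal C_h\cap\interior{\R}^2_+$, (V2) fails on the axis part of $\mathcal C_h$. This is harmless, because you only use the half of the verification theorem that does not need (V2).)

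The real gap is the last claim, and your diagnosis of it is right: $U_h=V$ gives only $\mathcal D_h\subseteq\mathcal D$, i.e.\ $h\ge b$. Your option (2) cannot close it, for two reasons.
\begin{enumerate}
\item The strong minimum principle forces $H\equiv0$ from an interior zero minimum only for supersolutions, $(\mathcal L-r)H\le0$. You would therefore need $\Gamma^{\mathrm{ac}}\le0$ near $z$. But $z\in\mathcal D$, and there the hypotheses push the other way: $U_h=G$ together with $(\mathcal L-r)U_h-c\le0$ gives $\Gamma\ge0$ on the interior of $\{U_h=G\}$. A function with $(\mathcal L-r)H\ge0$ can touch $0$ without vanishing, e.g.\ $|x-z|^2$ near $z$.
\item Even $H\equiv0$ on a component would not contradict $U_h=G-R_r^{\mathcal C_h}\Gamma$.
\end{enumerate}

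In fact the claim fails under the stated hypotheses. On $B:=\mathcal C_h\cap\mathcal D$ you have $U_h=G$ and $(\mathcal L-r)U_h=c$, which forces exactly $\Gamma^{\mathrm{ac}}=0$ a.e.\ on $\mathrm{int}\,B$. Nothing in Assumption~\ref{ass:reflected-diffusion} forbids data with $\Gamma^{\mathrm{ac}}\equiv0$ on a band $\{b(x_1)\le x_2<h(x_1)\}$ of $\mathcal D$ that abuts $\partial\mathcal C$. In that case crossing the band costs nothing, and $\tau_h$ is optimal alongside $\tau_{\mathcal D}$, which is only the smallest optimal time. Then $U_h=V$ meets every hypothesis, yet $h\ne b$. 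This already happens in one dimension: prescribe $V-G=(b-x)^3$ just left of $b$, so that $\Gamma>0$ there with $\Gamma(b)=0$, and choose $c$ so that $\Gamma\equiv0$ on $[b,h]$.

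So the identification needs an extra hypothesis. One option is your option (1), $\mathcal C_h=\{U_h>G\}$. Another is a non-degeneracy condition excluding $\Gamma^{\mathrm{ac}}=0$ on open subsets of $\mathcal D$, together with continuity of $h$ and $b$ so that $\{b\le x_2<h\}$ has interior whenever it is non-empty. The paper's proof does not supply this either; it simply asserts that ``equality of stopping sets then forces $h=b$'' without showing $\mathcal D_h=\mathcal D$.
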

\begin{proof}
The hypotheses are those of Theorem~\ref{thm:verification-measure} with
$\mathcal O=\mathcal C_h$, giving $U_h\ge V$. The stopped It\^o formula up to
$\tau_h$, using $(\mathcal L-r)U_h=c$ in $\mathcal C_h$ and contact $U_h=G$ on
$\mathcal D_h$, gives
$U_h(x)=\mathbb E_x[e^{-r\tau_h}G(X_{\tau_h})-\int_0^{\tau_h}e^{-rs}c\,ds]\le V(x)$.
Hence $U_h=V$ and $\tau_h$ is optimal; equality of stopping sets then forces
$h=b$.
\end{proof}

\begin{remark}[No uniqueness from the trace alone]
\label{rem:no-uniqueness-from-trace}
The trace condition
$\displaystyle    \lim_{\substack{x\to z_h(x_1)\\ x\in\mathcal C_h}}R_r^{\mathcal C_h}\Gamma(x)=0$
does not by itself imply uniqueness of $h$ or $U_h=G$ throughout $\mathcal D_h$;
uniqueness follows only after the verification argument above.
\end{remark}

\section{A verified reflected Brownian benchmark}
\label{sec:benchmark}
We instantiate the framework in a model where every standing hypothesis is
checked, so the theory is demonstrably non-empty and operational. Let
\begin{equation}
\label{eq:benchmark-sde}
   dX_t=\mu\,dt+\Sigma\,dW_t+dL_t,
   \qquad X_t\in\R^2_+,
\end{equation}
with constant $\mu\in\R^2$, constant invertible $\Sigma\in\R^{2\times2}$,
$a=\Sigma\Sigma^\top\succ0$, normal reflection \eqref{eq:normal-reflection},
constant running cost $c\equiv c_0\ge0$, and $G=x_1\vee\alpha x_2$. Because the
reflection directions are the inward normals $e_1,e_2$, the Skorokhod problem
decouples coordinatewise.

\begin{lemma}[Explicit solution and standing assumptions]
\label{lem:benchmark-skorokhod}
For \eqref{eq:benchmark-sde} the unique strong solution is, componentwise,
\[
   X_t^i=Y_t^i-\min\Big(0,\ \inf_{0\le s\le t}Y_s^i\Big),
   \qquad
   Y_t^i:=x_i+\mu_i t+(\Sigma W_t)_i,
\]
the one-dimensional Skorokhod reflection of $Y^i$ at $0$. Consequently
Assumption~\ref{ass:reflected-diffusion} holds: \textup{(A1)}, \textup{(A2)} are
immediate from constant coefficients and $a\succ0$; \textup{(A3)} holds with
$(X^x)_x$ strong Markov; \textup{(A4)} holds with the deterministic bound
$\displaystyle    \sup_{0\le s\le t}|X_s^x-X_s^{x'}|\le 2|x-x'|$;
and \textup{(A5)}, \textup{(A6)} hold with the linear bound
$0\le V(x)\le C(1+|x|)$, $p=1$.
\end{lemma}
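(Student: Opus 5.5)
The plan is to reduce everything to the one-dimensional Skorokhod map. Because the reflection directions are $e_1,e_2$ and the constraint $X^i\ge0$ involves only the $i$-th coordinate, system \eqref{eq:benchmark-sde}--\eqref{eq:normal-reflection} splits into two scalar Skorokhod problems: find $X^i=Y^i+L^i$ with $X^i\ge0$, $L^i$ non-decreasing, $L^i_0=0$, and $dL^i$ carried by $\{X^i=0\}$. Here $Y^i$ is a continuous path; the coordinates are coupled only through $W$, not through the reflection. Existence and uniqueness for the scalar problem is Skorokhod's lemma. I will check that $L^i_t=\max(0,-\inf_{s\le t}Y^i_s)$ works. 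For uniqueness, take two solutions and expand $(X-X')^2$ by the integration-by-parts formula for finite-variation paths:
\[
(X_t-X'_t)^2=2\int_0^t(X-X')\,d(L-L')\le0 .
\]
The sign follows from the support conditions together with $X,X'\ge0$. Since the map is pathwise and $Y^i$ is adapted, the solution is strong.

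For (A3), the strong Markov property follows from the pathwise flow structure. Concatenating the Skorokhod map at a stopping time $\tau$ gives $X_{\tau+\cdot}=\Phi(X_\tau+\mu\,\cdot+\Sigma(W_{\tau+\cdot}-W_\tau))$. Then $W_{\tau+\cdot}-W_\tau$ is a Brownian motion independent of $\mathcal F_\tau$. For (A4), I will use the Lipschitz property of $\Phi(y)=y-\min(0,\inf y)$ in the sup norm. Since $|\inf y-\inf y'|\le\|y-y'\|_\infty$, the map has constant at most $2$. The two driving paths differ only by the constant $x_i-x_i'$, which gives $\sup_s|X^{i,x}_s-X^{i,x'}_s|\le 2|x_i-x_i'|$. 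I will sum the coordinates and use $|x_1-x_1'|+|x_2-x_2'|\le\sqrt2|x-x'|$; with the Euclidean norm this gives $\sup_s|X^x_s-X^{x'}_s|\le2|x-x'|$. A sharper scalar estimate actually gives constant $1$, since $\Phi(y+k)-\Phi(y)$ lies between $0$ and $k$, but the stated bound suffices. Uniform convergence on compacts in every $L^q$ is then immediate. (A1), (A2) and (A5) are trivial.

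The main work is (A6). The lower bound $V\ge0$ follows from taking $\tau=\infty$, since the convention makes the payoff zero there and $c_0\ge0$ makes the cost term non-positive... The sign needs care. With $\tau=\infty$ the cost is $-c_0/r$, so instead I will use $\tau=0$, which gives $V\ge G\ge0$. For the upper bound I will drop the cost, so that $V(x)\le\sup_\tau\mathbb E_x[e^{-r\tau}(X^1_\tau+\alpha X^2_\tau)]$. The explicit map gives $X^i_t\le x_i+|\mu_i|t+2\sup_{s\le t}|(\Sigma W_s)_i|$. I would bound $e^{-rt}X^i_t$ by the integrable random variable
\[
Z=\sup_t e^{-rt}\Big(|x|+|\mu|t+2\sup_{s\le t}|\Sigma W_s|\Big),
\]
and show $\mathbb E Z\le C(1+|x|)$. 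I will split time into unit blocks $[k,k+1]$ and use $\mathbb E\sup_{s\le k+1}|W_s|\le C\sqrt{k+1}$, so that $\sum_k e^{-rk}\sqrt{k+1}<\infty$. This gives $V\le C(1+|x|)$ with $p=1$, and the same domination supplies the integrability required for arbitrary stopping times. Alternatively, one can use Lemma~\ref{lem:lyapunov-integrability} with a smoothed $\Psi$ that is Neumann at the axes and grows like $1+|x|$, but the direct domination is cleaner. I expect this domination step, which gives integrability uniformly over all stopping times including $\tau=\infty$, to be the only nonroutine point.
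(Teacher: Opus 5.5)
Your proposal is correct and follows the paper's route. Like the paper, you decouple into two scalar Skorokhod problems and read off the explicit map. For (A4) you combine its sup-norm Lipschitz property with the constant shift $Y^{x,i}-Y^{x',i}\equiv x_i-x_i'$, and for (A6) you bound $X$ pathwise by the running supremum of the driving path.

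The one real difference is in (A6), and there your version is more complete than the paper's sketch. The paper bounds $\mathbb E[\sup_{s\le t}|X_s|]\le C(1+|x|+t)$ and then takes $\sup_t e^{-rt}(1+|x|+t)$. That controls $\mathbb E[e^{-rt}G(X_t)]$ only at deterministic times, whereas $V$ is a supremum over arbitrary, possibly unbounded stopping times. Your dominating variable $Z=\sup_t e^{-rt}(\cdots)$, with the unit-block estimate $\mathbb E Z\le\sum_k e^{-rk}\bigl(|x|+|\mu|(k+1)+C\sqrt{k+1}\bigr)\le C(1+|x|)$, gives $e^{-r\tau}G(X_\tau)\le(1+\alpha)Z$ for every $\tau$ simultaneously. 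That is exactly what the bound $V\le C(1+|x|)$ and the integrability clause of (A6) require. Your pathwise bound $X^i_t\le x_i+|\mu_i|t+2\sup_{s\le t}|(\Sigma W_s)_i|$ is also right, since $X^i_t=\max\bigl(Y^i_t,\sup_{s\le t}(Y^i_t-Y^i_s)\bigr)$.

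There is one small slip in (A4). Summing the coordinate bounds and then using $|x_1-x_1'|+|x_2-x_2'|\le\sqrt2|x-x'|$ gives $2\sqrt2|x-x'|$, not $2|x-x'|$. To get the stated constant, combine the coordinatewise bounds in the Euclidean norm directly, $\bigl(\sum_i 4|x_i-x_i'|^2\bigr)^{1/2}=2|x-x'|$. Alternatively, use your own shift estimate $|\Phi(y+k)-\Phi(y)|\le|k|$, which gives $|x-x'|$ outright.
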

\begin{proof}
The decoupled normal reflection makes each coordinate the Skorokhod map of
$Y^i$, which is the displayed pathwise-unique strong solution; strong existence,
uniqueness, and the strong Markov property follow. The Skorokhod map is
$2$-Lipschitz in the sup norm, and for the same driving noise $Y^{x,i}-Y^{x',i}
\equiv x_i-x_i'$ is constant, giving the deterministic bound in (A4). From
$0\le X_t^i\le 2\sup_{s\le t}|Y_s^i|$ one gets
$\mathbb E[\sup_{s\le t}|X_s|]\le C(1+|x|+t)$; since $G(x)\le(1\vee\alpha)|x|$ and
$\sup_t e^{-rt}(1+|x|+t)\le(1+|x|)+1/(er)$, the value satisfies
$0\le V(x)\le C(1+|x|)$, so (A6) holds with $p=1$.
\end{proof}

\begin{lemma}[Lipschitz value]
\label{lem:benchmark-lipschitz}
In the benchmark, $V$ is globally Lipschitz with constant $2(1\vee\alpha)$.
\end{lemma}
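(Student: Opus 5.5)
The plan is a synchronous coupling: drive $X^x$ and $X^{x'}$ by the same Brownian motion $W$ and use the same stopping time for both starting points. Under this coupling the unreflected inputs satisfy $Y^{x,i}_t-Y^{x',i}_t\equiv x_i-x_i'$. By Lemma~\ref{lem:benchmark-skorokhod}, the one-dimensional Skorokhod map is $2$-Lipschitz in the sup norm, so pathwise
\[
\sup_{s\ge0}|X^{x,i}_s-X^{x',i}_s|\le 2|x_i-x_i'|\qquad\text{for }i=1,2.
\]
In fact the map $\Gamma(y)_t=y_t-\min(0,\inf_{s\le t}y_s)$ is $1$-Lipschitz for constant shifts, but the factor $2$ is all that is needed.

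Fix a stopping time $\tau$ of the common filtration. The running cost is constant, so the term $\int_0^\tau e^{-rs}c_0\,ds$ is identical for both starting points and cancels. The payoff $G(x)=x_1\vee\alpha x_2$ satisfies
\[
|G(z)-G(z')|\le |z_1-z_1'|\vee\alpha|z_2-z_2'|\le(1\vee\alpha)|z-z'|_\infty .
\]
Hence, on $\{\tau<\infty\}$,
\[
e^{-r\tau}|G(X^x_\tau)-G(X^{x'}_\tau)|\le 2(1\vee\alpha)|x-x'|,
\]
and both rewards vanish on $\{\tau=\infty\}$. Taking expectations, the objective functionals $J(x,\tau)$ and $J(x',\tau)$ therefore differ by at most $2(1\vee\alpha)|x-x'|$. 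Integrability holds by (A6) and the linear bound in Lemma~\ref{lem:benchmark-skorokhod}.

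To pass to the suprema, note that for a strong solution the set of stopping times is the same for every starting point (the filtration generated by $W$, augmented). Taking the supremum over $\tau$ then gives $V(x)\le V(x')+2(1\vee\alpha)|x-x'|$, and symmetry gives the reverse inequality. If a norm is required, I would state the result in the sup-norm, or note that $|\cdot|_\infty\le|\cdot|$ so the Euclidean statement follows.

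The main obstacle is admissibility. The supremum in \eqref{eq:value-function} is over stopping times of a fixed filtration on which both processes are defined, so it must be justified that restricting to stopping times of the filtration of $W$ loses nothing. I would argue this by noting that $(\mathcal F_t)$ contains that filtration, so every $\tau\in\mathcal T$ is admissible for both starting points simultaneously. This is exactly the comparison above, with no restriction at all: the bound holds path by path for each $\tau\in\mathcal T$, so no $\varepsilon$-optimal selection or Markovian reduction is needed.
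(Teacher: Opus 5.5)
Your proposal is correct and follows essentially the same route as the paper's proof: a synchronous coupling with a common stopping time $\tau\in\mathcal T$, cancellation of the constant running cost, the bound $|G(p)-G(p')|\le(1\vee\alpha)|p-p'|$, and the Lipschitz property of the coordinatewise Skorokhod map from Lemma~\ref{lem:benchmark-skorokhod}. Your version is pathwise and slightly sharper: your observation that a constant shift of the input moves the reflected path by at most $|x_i-x_i'|$ would even give the constant $1\vee\alpha$. The admissibility issue you flag is automatic, because $\mathcal T$ is a single fixed set of stopping times on the common filtered space.
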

\begin{proof}
For fixed $\tau$ the cost terms in \eqref{eq:value-function} coincide for $X^x$
and $X^{x'}$ (constant $c_0$), so by Lemma~\ref{lem:benchmark-skorokhod} and
$|G(p)-G(p')|\le(1\vee\alpha)|p-p'|$,
$|V(x)-V(x')|\le(1\vee\alpha)\sup_\tau\mathbb E|X_\tau^x-X_\tau^{x'}|\le2(1\vee\alpha)|x-x'|$.
\end{proof}

\begin{lemma}[Face local time does not charge the corner]
\label{lem:corner-localtime}
For normally reflected Brownian motion in the quadrant with non-degenerate
constant covariance $a\succ0$,
$\displaystyle    \int_0^t\mathbf 1_{\{X_s=(0,0)\}}\,dL_s^i=0$ a.s. $i=1,2$,
so Assumption~\ref{ass:no-corner-reflection} holds with vanishing corner term.
\end{lemma}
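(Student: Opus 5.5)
The strategy is to show that the reflected process spends zero Lebesgue time at the corner. Then, using the structure of the one-dimensional Skorokhod reflection, we show that $dL^i$ cannot charge a Lebesgue-null set of times at which $X^{3-i}=0$ as well.

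\emph{Step 1: the corner has zero occupation.} Fix $i=1$; the case $i=2$ is symmetric. By Lemma~\ref{lem:benchmark-skorokhod}, $X^1$ is the Skorokhod reflection of $Y^1=x_1+\mu_1t+(\Sigma W)_1$ and $X^2$ that of $Y^2$. The measure $dL^1_t$ is carried by the set $Z_1=\{t:X^1_t=0\}$. We want to show $dL^1(Z_1\cap Z_2)=0$, where $Z_2=\{t:X^2_t=0\}$. Since $a\succ0$, each $Y^i$ is a Brownian motion with drift and nondegenerate variance $a_{ii}$. Hence $Z_2$ has Lebesgue measure zero almost surely. However, $dL^1$ is singular with respect to Lebesgue measure, so this alone is not enough.

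\emph{Step 2: decorrelate and use local time at a polar set.} Write $dL^1_t=d\ell^0_t(X^1)$. This is legitimate because $L^1_t=-\min(0,\inf_{s\le t}Y^1_s)$, which for a reflected Brownian motion with drift coincides with the semimartingale local time at $0$ (up to the factor $a_{11}$). Next decompose $(\Sigma W)_2=\kappa(\Sigma W)_1+\sqrt{a_{22}-\kappa^2a_{11}}\,B$, where $\kappa=a_{12}/a_{11}$ and $B$ is a Brownian motion independent of $(\Sigma W)_1$. Non-degeneracy gives $a_{22}-\kappa^2a_{11}>0$. Condition on $\mathcal G=\sigma((\Sigma W)_1)$. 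Given $\mathcal G$, the measure $dL^1$ is fixed, and $Y^2$ equals a $\mathcal G$-measurable continuous path plus an independent nondegenerate Brownian motion.

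It then suffices to show that, for a fixed time $s$ in the support of $dL^1$, $\mathbb P(X^2_s=0\mid\mathcal G)=0$. This holds because $X^2_s=\sup_{u\le s}(Y^2_s-Y^2_u)\vee Y^2_s$ has a conditionally atomless law at $0$ for fixed $s>0$. The reason is that the running-minimum functional of a process containing an independent Brownian component has no atom: the event $\{Y^2_s=\inf_{u\le s}Y^2_u\wedge 0\}$ is null, since Brownian motion a.s.\ does not attain its running minimum at a fixed time. Fubini then gives
\[
\mathbb E\Big[\int_0^t\mathbf 1_{\{X^2_s=0\}}\,dL^1_s\,\Big|\,\mathcal G\Big]=\int_0^t\mathbb P(X^2_s=0\mid\mathcal G)\,dL^1_s=0 .
\]
This yields the claim for $i=1$. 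Combined with Lemma~\ref{lem:vanishing-reflection-G}, it verifies Assumption~\ref{ass:no-corner-reflection} for any $G_\varepsilon$ with locally bounded gradients, because the integrand in that assumption is then multiplied by a null measure.

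\emph{Main obstacle.} The delicate point is the conditioning argument. The drift term $\kappa(\Sigma W)_1$ makes the conditional path of $Y^2$ merely continuous rather than Brownian, so I must check that ``a continuous deterministic path plus an independent Brownian motion a.s.\ does not attain its running minimum at a fixed time $s$'' still holds. I would try Girsanov/Cameron–Martin first, but that fails for a non-absolutely-continuous shift. The fallback is a direct argument. At a fixed $s$, $B_s-B_{s-h}$ for small $h$ is independent of $\mathcal F_{s-h}$ and has a density. Since $\mathcal G$ is independent of $B$, the event that $Y^2_s$ equals the minimum over $[s-h,s]$ forces $B$ to be nonincreasing relative to a fixed continuous path on $[s-h,s]$. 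This has probability tending to $0$ as $h\downarrow0$, by the law of the iterated logarithm applied to $B_s-B_{s-h}$ (equivalently, by time reversal, Brownian motion started at $0$ immediately goes negative).
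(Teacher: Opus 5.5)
Your framework (decorrelate, condition on $\mathcal G=\sigma((\Sigma W)_1)$, integrate $\mathbb P(X^2_s=0\mid\mathcal G)$ against the $\mathcal G$-measurable measure $dL^1$) is the paper's heuristic made explicit, and the Fubini reduction is sound. The paper itself only asserts that the non-degenerate noise of $X^j$ makes its zero set $dL^i$-null, and then defers to the reflected-Brownian-motion literature.

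The gap is your pointwise claim that a continuous deterministic path plus an independent Brownian motion a.s.\ does not attain its running minimum at a fixed time. Write $Y^2=f+cB$ with $f$ $\mathcal G$-measurable and $c=\sqrt{a_{22}-\kappa^2a_{11}}$. Set $g(h):=f(s-h)-f(s)$ and $\tilde B_h:=B_s-B_{s-h}$, a Brownian motion in $h\in[0,s]$ independent of $\mathcal G$. Then $\{X^2_s=0\}\subseteq\{c\tilde B_h\le g(h)\text{ for all }0<h\le s\}$. Suppose $f$ has a sharp downward cusp at $s$, say $g(h)\ge 3c\sqrt{h\log\log(1/h)}$ near $h=0$ and $g$ large on the rest of $(0,s]$. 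Then the law of the iterated logarithm gives this event positive probability, and so does $\{X^2_s=0\}$ when $f(s)$ is very negative. So the claim is false for general continuous $f$. Your LIL step silently assumes that $g$ is negligible against the Brownian envelope at $s$. Also, $\{Y^2_s=\min_{[s-h,s]}Y^2\}$ grows as $h\downarrow0$, so its probability cannot ``tend to $0$''. What you actually need is that $\{c\tilde B_h\le g(h)\ \forall h\}$ is null, which is a Kolmogorov-test property of $g$.

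This is not a removable technicality, because the times you integrate over are exactly those where $f$ is non-generic. If $a_{12}>0$ (so $\kappa>0$), then at every $s$ in the support of $dL^1$ the path $Y^1$ sits at its running minimum. Hence $g(h)=\kappa(Y^1_{s-h}-Y^1_s)+(\kappa\mu_1-\mu_2)h$ is a non-negative cusp of the same order $\sqrt h$ as $c\tilde B_h$. This is precisely why the corner is reachable when $a_{12}>0$: the Varadhan--Williams parameter of the linearly transformed wedge is positive exactly then.

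For $a_{12}\le0$ your argument does close. On the support of $dL^1$ one has $g(h)\le Ch$, and $\limsup_{h\downarrow0}\tilde B_h/\sqrt h=+\infty$ by Blumenthal's $0$--$1$ law.

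For $a_{12}>0$ you need a quantitative fact about $dL^1$-typical times, for example $\liminf_{h\downarrow0}(Y^1_{s-h}-Y^1_s)/\sqrt h<\infty$ for $dL^1$-a.e.\ $s$. Given this, pick $\mathcal G$-measurable $h_n\downarrow0$ with $g(h_n)\le C\sqrt{h_n}$; Blumenthal's law then gives $\mathbb P(c\tilde B_{h_n}\le C\sqrt{h_n}\ \forall n)=0$. Proving that liminf bound requires excursion theory:
\begin{itemize}
\item parametrise $dL^1$ by the inverse local time $\tau_\ell$;
\item remove the drift by Girsanov;
\item reverse time at $\tau_\ell$;
\item recognise $X^1_{\tau_\ell-h}+(L^1_{\tau_\ell}-L^1_{\tau_\ell-h})$ as Pitman's $2M-W$, a three-dimensional Bessel process $\rho$, for which $\liminf_{h\downarrow0}\rho_h/\sqrt h=0$.
\end{itemize}
Alternatively, cite the Reiman--Williams boundary property of semimartingale reflecting Brownian motions: pushing processes do not charge intersections of faces. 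This is the kind of literature result the paper's proof ultimately leans on. Its one-line justification has the same weakness as your fixed-time claim.
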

\begin{proof}
The Stieltjes measure $dL^i$ is carried by $\{X^i=0\}$, and the corner event
additionally requires $X^j=0$ for $j\ne i$. By Lemma~\ref{lem:benchmark-skorokhod}
each coordinate is a one-dimensional reflected Brownian motion with non-degenerate
variance $a_{ii}>0$, whose zero set has Lebesgue measure zero and carries the
boundary local time $L^i$. At the support times of $L^i$ the other coordinate
$X^j$ is a continuous semimartingale with non-degenerate martingale part, so its
own zero set is hit on an a.s.\ $dL^i$-null set; equivalently, the planar
boundary process of a non-degenerate semimartingale reflecting Brownian motion in
the orthant does not charge the corner
\cite{harrison1981reflected,lions1984stochastic}. Hence
$\int_0^t\mathbf 1_{\{X_s=(0,0)\}}\,dL_s^i=0$ a.s. Mollifying $G$ then yields
Assumption~\ref{ass:no-corner-reflection} with corner term $0$.
\end{proof}

\begin{theorem}[Verification in the reflected Brownian benchmark]
\label{thm:benchmark}
For the reflected Brownian model \eqref{eq:benchmark-sde} with $a\succ0$,
constant $\mu$, constant cost $c_0\ge0$, and $r>0$:
\begin{enumerate}
\item[\textup{(i)}] the value function is finite, Lipschitz, and a reflected
viscosity solution of \eqref{eq:obstacle-max} with the Neumann conditions;
\item[\textup{(ii)}] the stopping-gain measure is $\Gamma=\Gamma^{\mathrm{ac}}\,dx
+\Gamma^\Delta$ with $\Gamma^{\mathrm{ac}}=c_0+rx_1-\mu_1$ on $\mathcal R_1$,
$\Gamma^{\mathrm{ac}}=c_0+r\alpha x_2-\alpha\mu_2$ on $\mathcal R_2$, and
$\displaystyle    \Gamma^\Delta(dx)=-\frac{q}{2\sqrt{1+\alpha^2}}\,\sigma_\Delta(dx)$,
with $\displaystyle q=a_{11}-2\alpha a_{12}+\alpha^2a_{22}>0$;
\item[\textup{(iii)}] every interior diagonal point is a continuation point,
$\Delta\cap\interior{\R}^2_+\subseteq\mathcal C$;
\item[\textup{(iv)}] the first-entry time $\tau_{\mathcal D}$ is optimal, and
\[
   V(x)=G(x)-R_r^{\mathcal C}\Gamma(x)
   =G(x)-\mathbb E_x\!\Big[\int_0^{\tau_{\mathcal D}}e^{-rs}\,dA_s^\Gamma\Big].
\]
\end{enumerate}
\end{theorem}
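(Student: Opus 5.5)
The plan is to assemble the four items from the general results already proved, with Lemmas~\ref{lem:benchmark-skorokhod}--\ref{lem:corner-localtime} supplying the hypotheses.

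\textbf{Item (i).} Finiteness with linear growth and the standing hypotheses (A1)--(A6) come from Lemma~\ref{lem:benchmark-skorokhod}. The Lipschitz bound is Lemma~\ref{lem:benchmark-lipschitz}. Continuity then gives the local uniform integrability needed in Lemma~\ref{lem:value-lsc}: for bounded $\tau\le T$, the payoffs at nearby initial points differ by at most $2(1\vee\alpha)|x-x'|$ pathwise. Proposition~\ref{prop:dpp} and Proposition~\ref{prop:value-viscosity} then give the reflected viscosity property.

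\textbf{Items (ii) and (iii).} For (ii), specialise Theorem~\ref{thm:stopping-gain-measure} to constant $a$ and $\mu$. Positivity of $q=n^\top a n$ holds because $a\succ0$ and $n\neq0$. The absolutely continuous part is read off from \eqref{eq:Gamma-ac-def} with $c\equiv c_0$. Lemma~\ref{lem:corner-localtime} verifies Assumption~\ref{ass:no-corner-reflection}, so the identity \eqref{eq:ito-tanaka-G-Gamma} holds with no corner term. Item (iii) is then Theorem~\ref{thm:no-stopping-kink}, since $a$ is non-degenerate everywhere.

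\textbf{Item (iv).} This is the substantive step: first prove optimality of $\tau_{\mathcal D}$, then invoke Theorem~\ref{thm:killed-resolvent-representation}.

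For optimality I would use the standard result that the first entry time into $\{V=G\}$ is optimal when $V$ is continuous and the gain process is of class (D) with vanishing limit at infinity \cite{peskir2006optimal,el2006aspects}. Continuity holds by (i), and $\mathcal D$ is closed.

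The integrability requirement is
\[
\mathbb E_x\Big[\sup_{t}e^{-rt}|G(X_t)|\Big]<\infty, \qquad e^{-rt}G(X_t)\to0 \text{ a.s.}
\]
To get it, use $0\le X^i_t\le 2\sup_{s\le t}|Y^i_s|$ and bound $\sup_t e^{-rt}\sup_{s\le t}|Y_s|$. Split time into unit intervals and use Gaussian tails: $\mathbb E\big[\sup_{s\le k+1}|Y_s|\big]\le C(1+|x|+k)$, and $\sum_k e^{-rk}(1+k)<\infty$. The a.s.\ limit follows from the law of the iterated logarithm. The running cost term $c_0\int_0^\tau e^{-rs}ds\le c_0/r$ is bounded. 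With this, the usual Snell envelope argument gives optimality of $\tau_{\mathcal D}$ in the convention $e^{-r\tau}G:=0$ on $\{\tau=\infty\}$.

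For the representation, apply \eqref{eq:ito-tanaka-G-Gamma} on $[0,t\wedge\tau_{\mathcal D}]$. The local martingale is $\int e^{-rs}\nabla G\cdot\Sigma\,dW$ with bounded integrand, so it is a true martingale. The additive functional $A^\Gamma$ splits as
\[
\int_0^{\cdot}\Gamma^{\mathrm{ac}}(X_s)\,ds \;-\; \frac12 L^0(Y).
\]
Its discounted total variation is integrable: $\Gamma^{\mathrm{ac}}$ has linear growth, and $\mathbb E\int_0^\infty e^{-rs}dL^0_s(Y)$ is finite by Tanaka's formula and the moment bounds above. This gives Assumption~\ref{ass:killed-potential-admissibility}. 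Dominated convergence as $t\to\infty$, using the class (D) bound, then yields $V=G-R_r^{\mathcal C}\Gamma$.

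\textbf{Main obstacle.} I expect the hard part to be the optimality of $\tau_{\mathcal D}$ on the event $\{\tau_{\mathcal D}=\infty\}$ and the passage to the limit there. If the abstract theorem is awkward to cite, I would fall back on a direct argument. First show that $e^{-r(t\wedge\tau_{\mathcal D})}V(X_{t\wedge\tau_{\mathcal D}})$, net of cost, is a martingale; this follows from the DPP of Proposition~\ref{prop:dpp} and continuity. Then let $t\to\infty$ using the linear growth of $V$ together with the supremum bound above.
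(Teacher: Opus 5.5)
Your proposal is correct and follows the paper's proof essentially step by step. Item (i) comes from the Skorokhod and Lipschitz lemmas together with the dynamic-programming and viscosity propositions. Items (ii)--(iii) come from specialising Theorem~\ref{thm:stopping-gain-measure} and Theorem~\ref{thm:no-stopping-kink} with $q=n^\top a n>0$. Item (iv) comes from Snell-envelope optimality of $\tau_{\mathcal D}$ combined with Lemma~\ref{lem:corner-localtime} and Theorem~\ref{thm:killed-resolvent-representation}.

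Your extra checks make explicit the ``required localisation and uniform integrability'' that the paper invokes without detail. These are the infinite-horizon class (D) bound $\mathbb E_x[\sup_t e^{-rt}G(X_t)]<\infty$ obtained by summing over unit intervals, the true-martingale property of the bounded-integrand stochastic integral, and integrability of $\int_0^\infty e^{-rs}\,dL^0_s(Y)$.
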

\begin{proof}
\textup{(i)} Finiteness and the Lipschitz property are
Lemmas~\ref{lem:benchmark-skorokhod}--\ref{lem:benchmark-lipschitz}; in
particular $V$ is continuous, so Propositions~\ref{prop:dpp} and
\ref{prop:value-viscosity} apply and $V$ is a reflected viscosity solution.
\textup{(ii)} Constant coefficients give the stated $\Gamma^{\mathrm{ac}}$, and
Theorem~\ref{thm:stopping-gain-measure} with constant $a$ gives $\Gamma^\Delta$;
$q=n^\top a\,n>0$ since $a\succ0$ and $n=(1,-\alpha)\ne0$. \textup{(iii)} The
hypotheses of Theorem~\ref{thm:no-stopping-kink} hold because $a\succ0$.
\textup{(iv)} Since $V$ is continuous with the growth and integrability of
Lemma~\ref{lem:benchmark-skorokhod}, the supermartingale--Snell-envelope theory
makes the first-entry time into $\mathcal D=\{V=G\}$ optimal
\cite{el2006aspects,peskir2006optimal}; by Lemma~\ref{lem:corner-localtime}
the It\^o--Tanaka identity \eqref{eq:ito-tanaka-G-Gamma} holds with no corner
term, so Theorem~\ref{thm:killed-resolvent-representation} applies and gives the
representation.
\end{proof}

\begin{remark}[What the benchmark settles and what it does not]
\label{rem:benchmark-scope}
The benchmark verifies, with no further hypotheses, all Tier I content of
Sections~\ref{sec:measure-stopping-gain}--\ref{sec:killed-resolvent}: the
measure decomposition, the strict diagonal sign, the no-stopping-on-the-kink
theorem, and the killed-resolvent representation. The Tier II geometry of
Section~\ref{sec:conditional-geometry} (epigraph form, monotonicity, smooth fit)
is not asserted here; establishing the precise shape of $\mathcal D$ for
\eqref{eq:benchmark-sde} remains a model-specific problem, for which
Assumptions~\ref{ass:vertical-H}--\ref{ass:weak-smooth-fit} are the natural
inputs and Proposition~\ref{prop:barrier-non-empty} the natural non-emptiness
route.
\end{remark}

\subsection{Tier I does not imply the epigraph geometry}
\label{sec:tierI-not-epigraph}
We close the section by making precise the logical gap that the benchmark does
not bridge: the unconditional Tier~I conclusions do not, by themselves, force
the stopping set to be an epigraph.

\begin{proposition}[Tier I results do not imply epigraph geometry]
\label{prop:tierI-not-epigraph}
Consider the reflected Brownian benchmark of Section~\ref{sec:benchmark}.
Then the benchmark establishes the Tier~I conclusions:
\[
   \Gamma=\Gamma^{\mathrm{ac}}\,dx+\Gamma^\Delta,
   \qquad
   \Gamma^\Delta(dx)
   =
   -\frac{q}{2\sqrt{1+\alpha^2}}\,\sigma_\Delta(dx),
   \qquad q>0,
\]
the diagonal-avoidance property
$\displaystyle    \Delta\cap\interior{\R}^2_+\subseteq \mathcal C$,
and the killed-resolvent representation
$\displaystyle    V(x)=G(x)-R_r^{\mathcal C}\Gamma(x)$.
However, these conclusions do not imply that the stopping set has the epigraph
form
$\displaystyle    \mathcal D=\{(x_1,x_2)\in\R^2_+:x_2\ge b(x_1)\}$.
The epigraph property remains a Tier~II assertion and requires additional
structural input, such as monotonicity of the stopping advantage \(H=V-G\).
\end{proposition}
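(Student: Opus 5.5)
The proposition has two parts: a positive part that only collects earlier results, and a negative part that is a statement about logical dependence.

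For the positive part I will cite Theorem~\ref{thm:benchmark}. Item (ii) gives the decomposition of $\Gamma$ with $q=n^\top a\,n>0$, item (iii) gives $\Delta\cap\interior{\R}^2_+\subseteq\mathcal C$, and item (iv) gives $V=G-R_r^{\mathcal C}\Gamma$ with $\tau_{\mathcal D}$ optimal. Lemmas~\ref{lem:benchmark-skorokhod} and \ref{lem:corner-localtime} already verify all of their hypotheses, so no new argument is needed here.

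For the negative part, the claim is that the three Tier~I conclusions, together with the standing benchmark hypotheses, do not logically force the epigraph form. I will argue this structurally rather than by solving the benchmark. The key observation is that each Tier~I statement holds for \emph{every} stopping set $\mathcal D$ that enters its derivation, as long as that set avoids the interior diagonal.

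\begin{itemize}
\item The decomposition of $\Gamma$ does not depend on $\mathcal D$ at all. It is a statement about $G$ and the coefficients only.
\item The killed-resolvent identity is a purely algebraic consequence of \eqref{eq:ito-tanaka-G-Gamma} stopped at first entry. It holds for any closed set $\mathcal D'$ in the sense $U'=G-R_r^{\mathcal C'}\Gamma$, where $U'$ is the payoff of $\tau_{\mathcal D'}$.
\item Diagonal avoidance only constrains $\mathcal D\cap\Delta$.
\end{itemize}

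The plan is therefore to exhibit a closed set $\mathcal D'\subset\R^2_+\setminus(\Delta\cap\interior{\R}^2_+)$ that is not an epigraph, and to check that it satisfies every Tier~I identity in the same form. My candidate is a two-sided set such as
\[
\mathcal D'=\{x_1\ge \alpha x_2+\kappa\}\cup\{\alpha x_2\ge x_1+\kappa\},\qquad \kappa>0,
\]
that is, stopping on both sides of the kink. One should expect this shape when the drift $\mu_1$ is small relative to $r$, since $\Gamma^{\mathrm{ac}}>0$ far out in both $\mathcal R_1$ and $\mathcal R_2$. Its vertical section at fixed $x_1>\kappa$ contains small $x_2$ (the part $x_2\le (x_1-\kappa)/\alpha$) but excludes the diagonal strip. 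It is therefore not an up-set, so it cannot equal $\{x_2\ge b(x_1)\}$ for any function $b$.

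So Tier~I is consistent with a non-epigraph geometry. Any derivation of the epigraph form must use extra input that excludes such sets, namely vertical monotonicity of $H$ (Assumption~\ref{ass:vertical-H}) as in Theorem~\ref{thm:conditional-epigraph}, or a barrier as in Proposition~\ref{prop:barrier-non-empty}. Remark~\ref{rem:graph-assumption-status} already notes that this monotonicity is not implied by the dynamics.

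The main obstacle is deciding how strong the "do not imply" claim has to be. The weak, logical version needs only that the Tier~I statements hold verbatim when $\mathcal D$ is replaced by the two-sided candidate $\mathcal D'$, which the bullet points above establish. The strong version asks that the actual optimal $\mathcal D$ for some benchmark parameters fail to be an epigraph. That would require showing that points deep in $\mathcal R_1$ with small $x_2$ are stopping points, via a local barrier built from $\Gamma^{\mathrm{ac}}=c_0+rx_1-\mu_1>0$ and Theorem~\ref{thm:verification-measure}, while diagonal points are not, by Theorem~\ref{thm:no-stopping-kink}. I would first aim for the logical version and add the barrier heuristic only as supporting evidence, since fully verifying a two-sided barrier with the Neumann conditions is delicate.
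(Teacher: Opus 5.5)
Your positive part matches the paper: you cite Theorem~\ref{thm:benchmark}(ii)--(iv). For the negative part the paper builds no witness. It argues that diagonal avoidance is only a local exclusion compatible with many section shapes, and that $H=-R_r^{\mathcal C}\Gamma$ carries no monotonicity information. So the input used by Theorem~\ref{thm:conditional-epigraph} is simply absent.

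Your witness route is more concrete, but it breaks at the third bullet. Diagonal avoidance is not a constraint on $\mathcal D\cap\Delta$ alone. It is the quantitative statement $V>G$ on $\Delta\cap\interior{\R}^2_+$, and $\mathcal C=\{V>G\}$ is also the set on which the resolvent in $V=G-R_r^{\mathcal C}\Gamma$ is killed. Replacing $\mathcal D$ by $\mathcal D'$ forces you to replace $V$ by $U'=G-R_r^{\mathcal C'}\Gamma$. ``Verbatim'' then requires $\{U'>G\}=\mathcal C'$, in particular $U'>G$ on the diagonal.

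For your constant-width strip this fails. The local time of $Y$ collected before leaving the strip is $O(\kappa)$ uniformly along it. By contrast, $\Gamma^{\mathrm{ac}}=c_0+rx_1-\mu_1$ grows linearly and the expected discounted exit time is bounded below uniformly, so $U'<G$ at diagonal points with large $x_1$. With $\Gamma^{\mathrm{ac}}$ frozen and drift and discount ignored, at $Y=y$ one gets $U'-G=(\kappa-|y|)\big(\tfrac12-\tfrac{\Gamma^{\mathrm{ac}}}{q}(\kappa+|y|)\big)$. This is positive on the whole strip only if $\kappa\le q/(4\Gamma^{\mathrm{ac}})$. If you instead drop the coupling $\mathcal C'=\{U'>G\}$, the weak version becomes vacuous. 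Every closed set missing the interior diagonal then ``satisfies'' the three displayed conclusions, including sets with $U'<G$ somewhere, and nothing about optimal stopping has been shown.

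The clean repair is the strong version, and it needs no two-sided barrier. If $\mathcal D=\{x_2\ge b(x_1)\}$ and $\Delta\cap\interior{\R}^2_+\subseteq\mathcal C$, then $b(x_1)>x_1/\alpha$ for every $x_1>0$. So a single stopping point with $x_1>\alpha x_2$ already refutes the epigraph form. To certify one, run the supermartingale half of the proof of Theorem~\ref{thm:verification-measure} (it uses only (V1) and (V4)--(V6)) on
\[
   W=G+\frac{1}{4\kappa}(\kappa-|Y|)_+^2
   +\frac{1}{4\ell}\Big[(\ell-x_1)_+^2+\alpha(\ell-x_2)_+^2\Big].
\]
This barrier works for three reasons:
\begin{itemize}
\item The bump in $Y$ has derivative jump $-1$ at $Y=0$, which cancels $\Gamma^\Delta$ exactly, so $W$ is $C^1$ across $\Delta$.
\item The two axis terms make $\partial_iW\le0$ on $\{x_i=0\}$. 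This compensates the wrong-sign reflection contribution of the bump, since reflection pushes $Y$ toward $0$ from both axes.
\item Off $\Delta$, $(\mathcal L-r)W-c_0\le0$ holds as soon as $\frac{q}{4\kappa}+\frac{|\mu_1-\alpha\mu_2|}{2}+\frac{a_{11}+\alpha a_{22}}{4\ell}+\frac{|\mu_1|+\alpha|\mu_2|}{2}\le\inf\Gamma^{\mathrm{ac}}$.
\end{itemize}
For the parameters of Section~\ref{sec:num-benchmark}, $\inf\Gamma^{\mathrm{ac}}=0.07$, and $\kappa=3/2$, $\ell=5/2$ satisfy this condition. Then $V\le W$ gives $V=G$ at $(9/2,5/2)\in\mathcal R_1$. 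Hence the true benchmark stopping set is not an epigraph. This is the two-sided picture you guessed, which Figure~\ref{fig:benchmark} also displays, and it is a genuinely stronger conclusion than the paper's discursive argument.
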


\begin{proof}
We separate the argument into two parts.

First, we recall what is proved in the benchmark. In the reflected Brownian
benchmark the covariance matrix \(a=\Sigma\Sigma^\top\) is constant and positive
definite. Therefore, for \(n=(1,-\alpha)\neq0\),
$\displaystyle    q=n^\top a n>0$.
Theorem~\ref{thm:stopping-gain-measure} then gives the decomposition of the
stopping-gain measure
$\displaystyle    \Gamma=c+rG-\mathcal LG
   =
   \Gamma^{\mathrm{ac}}\,dx+\Gamma^\Delta$,
with
$\displaystyle    \Gamma^\Delta(dx)
   =
   -\frac{q}{2\sqrt{1+\alpha^2}}\,\sigma_\Delta(dx)$.
Since \(q>0\), the diagonal component is strictly negative as a measure on the
interior diagonal. Hence Theorem~\ref{thm:no-stopping-kink} applies and yields
$\displaystyle    \Delta\cap\interior{\R}^2_+\subseteq\mathcal C$.
Moreover, in the benchmark the standing reflected-diffusion hypotheses,
integrability hypotheses, continuity of \(V\), and absence of a corner
reflection contribution have been verified. Standard Snell-envelope theory
therefore makes the first-entry time into
$\displaystyle    \mathcal D=\{V=G\}$
optimal. Applying Theorem~\ref{thm:killed-resolvent-representation} gives
$\displaystyle    V(x)
   =
   G(x)-R_r^{\mathcal C}\Gamma(x)$.
Thus the benchmark proves the Tier~I statements: the measure-valued stopping
gain, the strict diagonal sign, the exclusion of stopping on the kink, and the
killed-resolvent representation.

Second, we show that none of these Tier~I conclusions implies the epigraph
property. The inclusion
$\displaystyle    \Delta\cap\interior{\R}^2_+\subseteq\mathcal C$
is only a local exclusion statement. It says that the stopping set cannot
contain interior points of the kink line. It does not determine the global
geometry of \(\mathcal D\). In particular, it does not rule out any of the
following possibilities:
\[
   \mathcal D(x_1)=\varnothing,
   \qquad
   \mathcal D(x_1)=[a_1(x_1),a_2(x_1)],
   \qquad
   \mathcal D(x_1)=[a_1(x_1),a_2(x_1)]\cup[a_3(x_1),\infty),
\]
or more complicated vertical sections. The epigraph property would require
every vertical section
$\displaystyle   \mathcal D(x_1):=\{x_2\ge0:(x_1,x_2)\in\mathcal D\}$
to be a single upper interval. Diagonal avoidance alone gives no such
one-sidedness.

Nor does the killed-resolvent representation imply this one-sidedness. From
$\displaystyle    V(x)=G(x)-R_r^{\mathcal C}\Gamma(x)$
we obtain, on \(\mathcal C\),
$\displaystyle    H(x):=V(x)-G(x)=-R_r^{\mathcal C}\Gamma(x)$.
This identity represents the stopping advantage as a killed potential of a
signed measure. But a killed potential of a signed measure need not be monotone
in either coordinate. In the present problem \(\Gamma\) has both an absolutely
continuous part and a negative singular part on \(\Delta\):
\[
   \Gamma=\Gamma^{\mathrm{ac}}\,dx
   -\frac{q}{2\sqrt{1+\alpha^2}}\,\sigma_\Delta(dx).
\]
The sign, size, and spatial distribution of \(\Gamma^{\mathrm{ac}}\), together
with the geometry of the killed domain \(\mathcal C\), determine the behaviour
of \(R_r^{\mathcal C}\Gamma\). Therefore the formula
$\displaystyle    H=-R_r^{\mathcal C}\Gamma$
does not imply that
$\displaystyle    x_2\longmapsto H(x_1,x_2)$
is non-increasing for each fixed \(x_1\).

By contrast, the epigraph theorem in Section~\ref{sec:conditional-geometry}
uses precisely this missing monotonicity input. If one assumes that
\(V\) is continuous, that vertical stopping sections are non-empty, and that
$\displaystyle    x_2\longmapsto H(x_1,x_2)$ is non-increasing for each fixed $x_1$,
then the epigraph structure follows. Indeed, suppose
\(z_2\in\mathcal D(x_1)\) and \(y_2\ge z_2\). Since \(H\ge0\) and
\(H(x_1,z_2)=0\), vertical monotonicity gives
$\displaystyle    0\le H(x_1,y_2)\le H(x_1,z_2)=0$.
Thus \(H(x_1,y_2)=0\), so \(y_2\in\mathcal D(x_1)\). Hence
\(\mathcal D(x_1)\) is an upper set. By continuity of \(H\), it is closed, and
therefore
$\displaystyle    \mathcal D(x_1)=[b(x_1),\infty)$
for \(b(x_1):=\inf\mathcal D(x_1)\). This proves
$\displaystyle    \mathcal D=\{(x_1,x_2):x_2\ge b(x_1)\}$.
The crucial step is the monotonicity of \(H\), not the Tier~I measure formula,
not diagonal avoidance, and not the killed-resolvent representation.

Finally, the finite-difference computations in Section~\ref{sec:numerics} have
a diagnostic role only. They show that, for the selected reflected Brownian
parameters, the computed continuation advantage \(H_h=V_h-G\) is positive along
the diagonal and that the approximate stopping set appears ordered away from
the diagonal. But this computation is performed on a truncated grid with an
outer boundary condition and finite resolution. It can illustrate the
diagonal-avoidance mechanism already proved analytically, but it cannot replace
a proof that the continuous-state stopping advantage \(H\) is vertically
monotone. Therefore the benchmark and the numerics establish the Tier~I
mechanism and visualise the diagonal-avoidance picture, while the epigraph
shape remains a separate Tier~II question.
\end{proof}

\begin{remark}[Interpretation]
\label{rem:numerics-not-geometry-proof}
The numerical benchmark should therefore be read as evidence for the geometry,
not as a proof of it. The proven statement is
$\displaystyle    \Delta\cap\interior{\R}^2_+\subseteq\mathcal C$,
not
$\displaystyle    \mathcal D=\{x_2\ge b(x_1)\}$.
The latter requires a monotonicity theorem for \(H=V-G\), or another argument
showing that every vertical stopping section is an upper interval.
\end{remark}

\section{Monte Carlo and finite-difference experiments}
\label{sec:numerics}
The experiments below are designed to test the stochastic mechanisms behind the
theorems, not to price a particular contract. Three mechanisms are isolated: the
$\sqrt t$ local-time gain at the kink (Theorem~\ref{thm:no-stopping-kink}), the
strict gap between killed and unrestricted resolvents
(Proposition~\ref{prop:unrestricted-fails}), and the diagonal avoidance in the
reflected Brownian benchmark (Theorem~\ref{thm:benchmark}).

\subsection{Local-time scaling at the kink}
\label{sec:num-local-time}
We simulate the benchmark reflected Brownian motion \eqref{eq:benchmark-sde}
with $\mu=0$, $\Sigma=I_2$, $\alpha=1$, started at the interior diagonal point
$x=(1,1)$, so that $Y=X^1-X^2$ has $q=n^\top a\,n=2$. The point lies far from the
axes, so over short horizons reflection is rare and the kink mechanism is
isolated. The reflected process is generated by the coordinatewise
Euler--Skorokhod projection
$\widetilde X_{k+1}=X_k+\mu\Delta t+\Sigma\sqrt{\Delta t}\,\xi_k$,
$X_{k+1}=(\widetilde X_{k+1})^+$, and the local time of $Y$ at zero is estimated
by the occupation-density formula
\begin{equation}
\label{eq:localtime-estimator}
   \widehat L_T^0(Y)
   =\frac{1}{2\varepsilon}\sum_{k=0}^{N-1}
   \mathbf 1_{\{|Y_k|<\varepsilon\}}\,q(X_k)\,\Delta t,
   \qquad \varepsilon=C_\varepsilon\sqrt{\Delta t}\ \ (C_\varepsilon=2).
\end{equation}
Figure~\ref{fig:localtime} shows the resulting estimate of
$\mathbb E[L_t^0(Y)]$. The log--log slope is close to $1/2$ and the ratio
$\widehat{\mathbb E}[L_t^0(Y)]/\sqrt t$ stabilizes near the theoretical value
$2/\sqrt\pi$ (for a driftless Gaussian increment with variance $q=2$,
$\mathbb E[L_t^0(Y)]=\sqrt{4t/\pi}$). Table~\ref{tab:localtime} reports the fitted
exponent $\widehat\beta$ in $\widehat{\mathbb E}[L_t^0(Y)]\approx\widehat C\,t^{\widehat\beta}$
for several time steps; as $\Delta t\downarrow0$ the band bias decreases and
$\widehat\beta\to\tfrac12$. Since the accumulated running cost over $[0,t]$ is
$O(t)$, this confirms the scale separation used in the proof of
Theorem~\ref{thm:no-stopping-kink}: the kink-generated local-time term dominates
the running cost over short horizons.

\begin{figure}[h!]
\centering
\includegraphics[width=\textwidth]{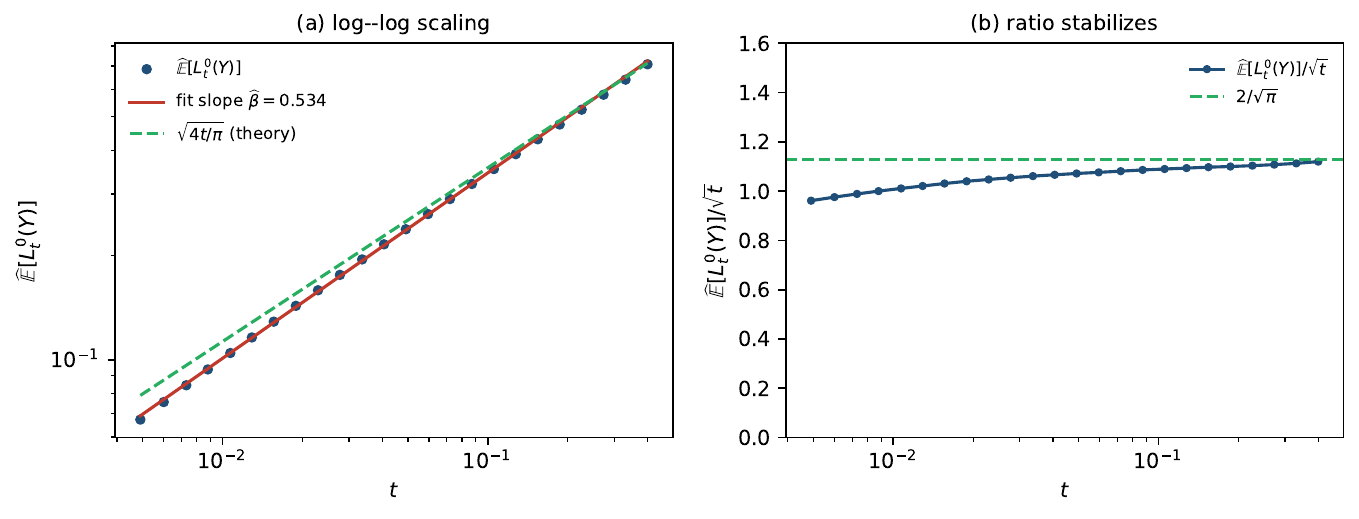}
\caption{Estimated local-time scaling at the diagonal kink, benchmark reflected
Brownian motion started at $x=(1,1)$. (a)~The log--log slope of
$\widehat{\mathbb E}[L_t^0(Y)]$ is close to $1/2$. (b)~The ratio
$\widehat{\mathbb E}[L_t^0(Y)]/\sqrt t$ stabilizes near $2/\sqrt\pi$, confirming
the $\sqrt t$ scaling used in Theorem~\ref{thm:no-stopping-kink}
($M=1.2\times10^5$ paths, $\Delta t=10^{-4}$).}
\label{fig:localtime}
\end{figure}

\begin{table}[h!]
\centering
\caption{Local-time scaling. Fitted exponent $\widehat\beta$ and constant
$\widehat C$ in $\widehat{\mathbb E}[L_t^0(Y)]\approx\widehat C\,t^{\widehat\beta}$
over $t\in[10^{-2},2\times10^{-1}]$, for decreasing time step $\Delta t$ with
band $\varepsilon=2\sqrt{\Delta t}$ and $M=1.2\times10^5$ paths. The theoretical
values are $\beta=\tfrac12$, $C=2/\sqrt\pi\approx1.128$.}
\label{tab:localtime}
\begin{tabular}{ccccc}
\toprule
$\Delta t$ & $\varepsilon$ & $\widehat\beta$ & $\widehat C$ & $M$\\
\midrule
$2.0\times10^{-3}$ & $0.0894$ & $0.625$ & $1.285$ & $1.2\times10^5$\\
$1.0\times10^{-3}$ & $0.0632$ & $0.589$ & $1.241$ & $1.2\times10^5$\\
$5.0\times10^{-4}$ & $0.0447$ & $0.563$ & $1.206$ & $1.2\times10^5$\\
$1.0\times10^{-4}$ & $0.0200$ & $0.534$ & $1.178$ & $1.2\times10^5$\\
\bottomrule
\end{tabular}
\end{table}

\subsection{Killed versus unrestricted resolvents}
\label{sec:num-resolvent-gap}
We verify Proposition~\ref{prop:unrestricted-fails} for one-dimensional reflected
Brownian motion on $[0,\infty)$ with $\mathcal C=[0,b)$, $b=1$, $r=1$, and
$f=\mathbf 1_{[0,b)}$. The killed resolvent is estimated by accumulating
$e^{-rt_k}\Delta t$ along each path until its first passage to $b$, and the
unrestricted resolvent by accumulating $e^{-rt_k}\mathbf 1_{\{X_k<b\}}\Delta t$
over a long horizon $T=8$; first passage is corrected by the Brownian-bridge
crossing probability between grid points to remove the Euler overshoot bias.
Figure~\ref{fig:resolvent} overlays the Monte Carlo estimates on the closed-form
curves of Proposition~\ref{prop:unrestricted-fails}, and Table~\ref{tab:resolvent}
reports the gap and its relative error. The agreement is within one percent at
interior points, so the strict gap is a genuine feature and not a discretization
artifact. Probabilistically, the gap is the discounted occupation accumulated
after the process first reaches the stopping set and then re-enters the
continuation region; this is precisely the occupation an optimal stopping
representation must not count.

\begin{figure}[h!]
\centering
\includegraphics[width=0.8\textwidth]{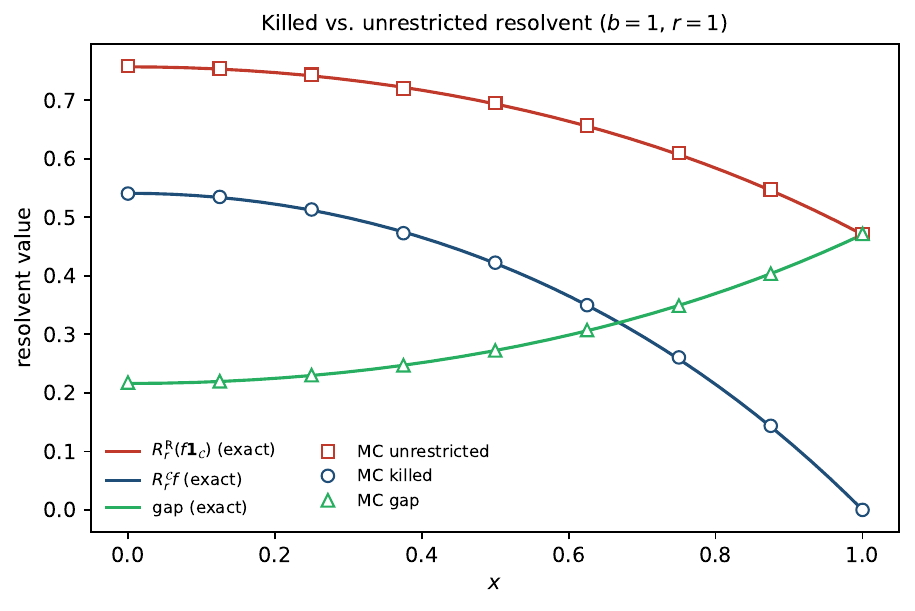}
\caption{Killed and unrestricted resolvents for reflected Brownian motion on the
half-line ($b=1$, $r=1$). Monte Carlo estimates ($M=3\times10^4$ paths per
starting point, $\Delta t=2\times10^{-3}$) agree with the closed-form curves of
Proposition~\ref{prop:unrestricted-fails}; the positive gap is the discounted
post-stopping occupation caused by re-entry into the continuation region.}
\label{fig:resolvent}
\end{figure}

\begin{table}[h!]
\centering
\caption{Resolvent gap
$R_r^{\mathrm R}(f\mathbf 1_{\mathcal C})(x)-R_r^{\mathcal C}f(x)$ for reflected
Brownian motion on $[0,\infty)$, $b=1$, $r=1$: closed form versus Monte Carlo,
with relative error.}
\label{tab:resolvent}
\begin{tabular}{cccc}
\toprule
$x$ & exact gap & Monte Carlo gap & relative error\\
\midrule
$0.00$ & $0.2160$ & $0.2176$ & $0.75\%$\\
$0.25$ & $0.2296$ & $0.2302$ & $0.26\%$\\
$0.50$ & $0.2723$ & $0.2724$ & $0.06\%$\\
$0.75$ & $0.3493$ & $0.3489$ & $0.11\%$\\
$1.00$ & $0.4704$ & $0.4714$ & $0.21\%$\\
\bottomrule
\end{tabular}
\end{table}

\subsection{Diagonal avoidance in the reflected Brownian benchmark}
\label{sec:num-benchmark}
We approximate the reflected obstacle problem \eqref{eq:obstacle-max} for the
benchmark \eqref{eq:benchmark-sde} with $\alpha=1$, $r=0.08$, $c_0=0.05$,
$\mu=(-0.02,-0.02)$, and
$\Sigma=\bigl(\begin{smallmatrix}0.35&0\\0.15&0.30\end{smallmatrix}\bigr)$, so
that $a=\Sigma\Sigma^\top$ has $q=n^\top a\,n=0.115>0$. We use the
Kushner--Dupuis Markov-chain (finite-difference) scheme on $[0,X_{\max}]^2$ with
$X_{\max}=5$, reflecting (mirror) conditions at $x_1=0$ and $x_2=0$, and the
truncation condition $V_h=G$ on the outer edges $x_i=X_{\max}$; the discrete
variational inequality $\max\{(\mathcal L_h-r)V_h-c_0,\;G-V_h\}=0$ is solved by
value iteration. Figure~\ref{fig:benchmark} shows the continuation advantage
$H_h=V_h-G$: it is concentrated as a ridge along the diagonal and is essentially
zero (stopping) elsewhere, so the stopping set fills the bulk of the domain while
avoiding the diagonal. Table~\ref{tab:benchmark} reports the minimum of $H_h$
along the interior diagonal (with a margin of $0.5$ from the boundary) for three
grids; the value is bounded away from zero and stable under refinement,
consistent with Theorem~\ref{thm:no-stopping-kink}. The experiment is not used as
a proof of epigraph structure; it visualises the unconditional diagonal-avoidance
mechanism in a concrete reflected diffusion.

\begin{figure}[h!]
\centering
\includegraphics[width=\textwidth]{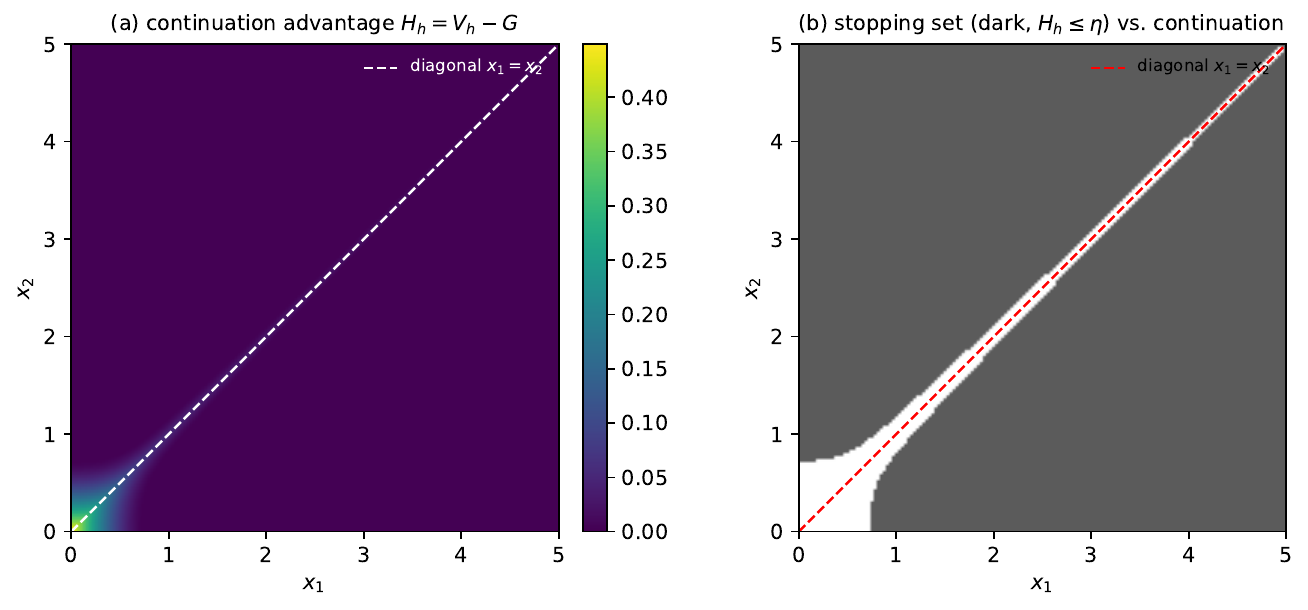}
\caption{Finite-difference approximation of the reflected Brownian benchmark
($N=160$). (a)~The continuation advantage $H_h=V_h-G$ forms a ridge along the
diagonal. (b)~The stopping set $\{H_h\le\eta\}$ (dark, $\eta=10^{-3}$) fills the
bulk of the domain; the diagonal lies inside the thin continuation strip, in
agreement with Theorem~\ref{thm:no-stopping-kink}.}
\label{fig:benchmark}
\end{figure}

\begin{table}[h!]
\centering
\caption{Diagonal avoidance in the benchmark. Minimum and maximum of the
continuation advantage $H_h=V_h-G$ on the interior diagonal
$\{x_1=x_2\}\cap[0.5,X_{\max}-0.5]^2$, and the stopping-set fraction
$\{H_h\le10^{-3}\}$, for three grid sizes. The interior-diagonal minimum is
bounded away from zero and stable under refinement.}
\label{tab:benchmark}
\begin{tabular}{ccccc}
\toprule
$N$ & $\delta$ & $\min_{\mathrm{diag}}H_h$ & $\max_{\mathrm{diag}}H_h$ & stopping fraction\\
\midrule
$80$  & $0.0625$ & $0.0185$ & $0.1264$ & $0.947$\\
$120$ & $0.0417$ & $0.0189$ & $0.1257$ & $0.943$\\
$160$ & $0.0312$ & $0.0184$ & $0.1252$ & $0.943$\\
\bottomrule
\end{tabular}
\end{table}

\subsection{Reproducibility}
\label{sec:num-reproducibility}
All simulations use independent Gaussian increments with fixed random seeds. The
reflected process is generated by the coordinatewise Skorokhod map, local time is
estimated through the occupation-density approximation
\eqref{eq:localtime-estimator}, and first-passage events in the resolvent
experiment use the Brownian-bridge crossing correction. Convergence was checked
by halving both the time step and the occupation bandwidth (Table~\ref{tab:localtime})
and by refining the spatial grid (Table~\ref{tab:benchmark}). The
finite-difference scheme uses the Kushner--Dupuis local-consistency weights, for
which the transition probabilities are non-negative because the covariance is
diagonally dominant ($a_{11}-a_{12}>0$ and $a_{22}-a_{12}>0$). Pseudocode is
given in Appendix~\ref{app:simulation}. The code used to generate the figures is
short and will be made available upon request.

\section{Discussion and limitations}
\label{sec:discussion}
The contributions are cleanly tiered. Unconditionally, within the standing
reflected-diffusion hypotheses, we have shown that the stopping gain of the max
payoff is a signed measure whose diagonal part is the explicit, sign-definite
surface measure of Theorem~\ref{thm:stopping-gain-measure}; that this strict sign
forces the optimal stopper off the kink (Theorem~\ref{thm:no-stopping-kink});
that the value is the killed-resolvent potential of that measure
(Theorem~\ref{thm:killed-resolvent-representation}); and that the unrestricted
reflected resolvent strictly overstates it, by the solved example of
Proposition~\ref{prop:unrestricted-fails}. These statements are verified end to
end in the reflected Brownian benchmark (Theorem~\ref{thm:benchmark}) and
corroborated numerically in Section~\ref{sec:numerics}, where the $\sqrt t$
local-time scaling, the resolvent gap, and the diagonal avoidance are reproduced
directly.

Conditionally, the free-boundary geometry---epigraph representation, monotonicity
of the boundary, smooth fit, and candidate verification---rests on explicit
structural hypotheses on $H=V-G$ that we do not derive from the dynamics. We
regard this honesty as a feature: the multidimensional reflected problem does not
admit a cheap regularity theory, and the literature obtains boundary regularity
only under problem-specific work
\cite{peskir2019continuity,de2019lipschitz,de2015note,de2020global}.

Several questions remain open. First, identifying checkable conditions on
$(\mu,a,c)$ under which Assumption~\ref{ass:vertical-H} holds for
\eqref{eq:benchmark-sde}, and hence the epigraph structure, would upgrade the
benchmark from Tier~I to a fully solved free-boundary problem. Second, the corner
$(0,0)$ was handled by an assumption verified only for non-degenerate covariance;
degenerate or oblique reflection may produce a genuine corner additive functional
that must be added to $\Gamma$. Third, the killed Green kernel
$G_r^{\mathcal C}$ of the reflected diffusion, together with the trace of the
diagonal potential $R_r^{\mathcal C}\Gamma^\Delta$, would turn the trace condition
of Proposition~\ref{prop:boundary-trace} into an explicit integral equation for
the boundary; constructing $G_r^{\mathcal C}$ in the quadrant is itself a
non-trivial potential-theoretic task. Finally, the no-stopping-on-the-kink
phenomenon should extend to general convex piecewise-affine payoffs, with the
diagonal surface measure replaced by a sum of sign-definite surface measures over
the kink set; we expect the local-time scaling argument to carry over verbatim.

\appendix
\section{Simulation details}
\label{app:simulation}
We summarize the three algorithms. Throughout, $\xi_k$ denotes an independent
standard Gaussian vector and $\Delta t$ the time step.

\emph{Reflected Brownian path (benchmark).} Starting from $X_0=x$, iterate for
$k=0,\dots,N-1$:
\[
   \widetilde X_{k+1}=X_k+\mu\,\Delta t+\Sigma\sqrt{\Delta t}\,\xi_k,
   \qquad
   X_{k+1}=(\widetilde X_{k+1})^+\ \text{coordinatewise}.
\]

\emph{Local time at the kink.} With $Y_k=X_k^1-\alpha X_k^2$ and
$\varepsilon=2\sqrt{\Delta t}$, accumulate
\[
   \widehat L_T^0(Y)
   =\frac{1}{2\varepsilon}\sum_{k=0}^{N-1}
   \mathbf 1_{\{|Y_k|<\varepsilon\}}\,q(X_k)\,\Delta t .
\]

\emph{Killed and unrestricted resolvents (half-line).} For each path with
$X_0=x\in[0,b)$, set the killed clock active. At step $k$, add
$e^{-rk\Delta t}\Delta t$ to the unrestricted estimator if $X_k<b$, and to the
killed estimator if additionally the clock is active. After the Euler step to
$X_{k+1}$, deactivate the clock if $X_{k+1}\ge b$, or, when $X_k,X_{k+1}<b$, with
the Brownian-bridge crossing probability
$\exp\!\big(-2(b-X_k)(b-X_{k+1})/(\sigma^2\Delta t)\big)$ ($\sigma^2=1$ here).
Average over paths.

\emph{Finite-difference obstacle solver.} On a uniform grid of spacing $\delta$,
form the Kushner--Dupuis weights
$w_{\pm e_1}=\tfrac12(a_{11}-a_{12})+\delta\mu_1^\pm$,
$w_{\pm e_2}=\tfrac12(a_{22}-a_{12})+\delta\mu_2^\pm$,
$w_{e_1+e_2}=w_{-e_1-e_2}=\tfrac12a_{12}$, with normalization $Q$ equal to their
sum and local time step $\Delta t=\delta^2/Q$. Initialize $V_h=G$ and iterate
$V_h\leftarrow\max\{G,\ -c_0\Delta t+(1+r\Delta t)^{-1}\sum_y p(\cdot,y)V_h(y)\}$,
$p=w/Q$, until the sup-norm increment falls below the tolerance, using mirror
neighbours at $x_i=0$ and $V_h=G$ on the outer edges.

\bibliography{reference}

\end{document}